\DeclareMathAlphabet{\dutchcal}{U}{dutchcal}{m}{n}
\newcolumntype{L}[1]{>{\raggedright\let\newline\\\arraybackslash\hspace{0pt}}m{#1}}
\newcolumntype{C}[1]{>{\centering\let\newline\\\arraybackslash\hspace{0pt}}m{#1}}
\newcolumntype{R}[1]{>{\raggedleft\let\newline\\\arraybackslash\hspace{0pt}}m{#1}}
\newcommand{\ben}[1]{\textcolor{magenta}{Ben: #1}}
\newcommand{\ollie}[1]{\textcolor{olive!60!green}{Ollie: #1}}
\newtheoremstyle{theoremstyle}
{10pt}      
{5pt}       
{\itshape}  
{}          
{\bfseries} 
{}         
{ }      
{}          
\newtheoremstyle{algorithmstyle}
{10pt}      
{5pt}       
{}  
{}          
{\bfseries} 
{}         
{ }      
{}          
\newtheoremstyle{examplestyle}
{10pt}      
{5pt}       
{}          
{}          
{\bfseries} 
{}         
{ }      
{}          
\newtheorem*{rep@theorem}{\rep@title}
\newcommand{\newreptheorem}[2]{%
\newenvironment{rep#1}[1]{%
 \def\rep@title{#2 \ref{##1}}%
 \begin{rep@theorem}}%
 {\end{rep@theorem}}}
\newcommand{\subalign}[1]{%
  \vcenter{%
    \Let@ \restore@math@cr \default@tag
    \baselineskip\fontdimen10 \scriptfont\tw@
    \advance\baselineskip\fontdimen12 \scriptfont\tw@
    \lineskip\thr@@\fontdimen8 \scriptfont\thr@@
    \lineskiplimit\lineskip
    \ialign{\hfil$\m@th\scriptstyle##$&$\m@th\scriptstyle{}##$\hfil\crcr
      #1\crcr
    }%
  }%
}
\theoremstyle{theoremstyle}
\newtheorem{theorem}{Theorem}[section]
\newtheorem{lemma}[theorem]{Lemma}
\newtheorem{proposition}[theorem]{Proposition}
\newtheorem{corollary}[theorem]{Corollary}
\newtheorem{conjecture}[theorem]{Conjecture}
\newtheorem*{theorem*}{Main theorem}
\newtheorem*{lemma*}{Lemma}
\theoremstyle{examplestyle}
\newtheorem{example}[theorem]{Example}
\newtheorem{definition}[theorem]{Definition}
\newtheorem*{notation*}{Notation}
\newtheorem{remark}[theorem]{Remark}
\theoremstyle{algorithmstyle}
\newcommand{\NN}{\mathbb{N}}
\newcommand{\CC}{\mathbb{C}}
\newcommand{\RR}{\mathbb{R}}
\newcommand{\ZZ}{\mathbb{Z}}
\newcommand{\suchthat}{\;\ifnum\currentgrouptype=16 \middle\fi|\;}
\newcommand{\bigmid}{\left.\vphantom{\Big\{} \suchthat \vphantom{\Big\}}\right.}
\newcommand{\cB}{\mathcal{B}}
\newcommand{\cC}{\mathcal{C}}
\newcommand{\cF}{\mathcal{F}}
\newcommand{\cH}{\mathcal{H}}
\newcommand{\cN}{\mathcal{N}}
\newcommand{\tF}{\widetilde{F}}
\newcommand{\tH}{\widetilde{H}}
\newcommand{\nbc}[1]{{\rm nbc}(#1)}
\DeclareMathOperator{\cl}{cl}
\DeclareMathOperator{\codim}{codim}
\DeclareMathOperator{\conv}{conv}
\DeclareMathOperator{\cone}{cone}
\DeclareMathOperator{\mult}{mult}
\DeclareMathOperator{\Newt}{Newt}
\DeclareMathOperator{\Span}{Span}
\DeclareMathOperator{\Supp}{Supp}
\DeclareMathOperator{\Trop}{Trop}
\DeclareMathOperator{\val}{val}
\DeclareMathOperator{\Lam}{Lam}
\DeclareMathOperator{\rank}{rank}
\DeclareMathOperator{\rec}{rec}
\DeclareRobustCommand{\Chi}{{\mathpalette\irchi\relax}}
\newcommand{\irchi}[2]{\raisebox{\depth}{$#1\chi$}}
\tikzstyle{edge}=[line width=1.5pt,black]
\tikzstyle{vertex}=[fill=black,circle,inner sep=0pt, minimum size=4pt]
\let\@fnsymbol\@arabic
\title{A tropical approach to rigidity:\\ counting realisations of frameworks}
\author{Oliver Clarke}
\address{Oliver Clarke, Department of Mathematical Sciences, Durham University.}
\email{oliver.clarke@durham.ac.uk}
\urladdr{https://www.oliverclarkemath.com/}
\author{Sean Dewar}
\address{Sean Dewar, School of Mathematics, University of Bristol.}
\email{sean.dewar@bristol.ac.uk}
\urladdr{https://www.seandewar.com/}
\author{Daniel Green Tripp}
\address{Daniel Green Tripp, School of Mathematics, University of Bristol.}
\email{daniel.greentripp@bristol.ac.uk}
\urladdr{https://sites.google.com/view/daniel-greentripp/}
\author{James Maxwell}
\address{James Maxwell, School of Mathematics, University of Bristol.}
\email{james.william.maxwell@gmail.com}
\urladdr{https://sites.google.com/view/jmacademicsite/}
\author{\\ Anthony Nixon}
\address{Anthony Nixon, School of Mathematical Sciences, Lancaster University.}
\email{a.nixon@lancaster.ac.uk}
\urladdr{https://www.lancaster.ac.uk/maths/people/anthony-nixon}
\author{Yue Ren}
\address{Yue Ren, Department of Mathematical Sciences, Durham University.}
\email{yue.ren2@durham.ac.uk}
\urladdr{https://www.yueren.de/}
\author{Ben Smith}
\address{Ben Smith, School of Mathematical Sciences, Lancaster University.}
\email{b.smith9@lancaster.ac.uk}
\urladdr{https://sites.google.com/view/bsmithmathematics/}
\date{}
\begin{document}

\begin{abstract}
A realisation of a graph in the plane as a bar-joint framework is rigid if there are finitely many other realisations, up to isometries, with the same edge lengths. Each of these finitely-many realisations can be seen as a solution to a system of quadratic equations prescribing the distances between pairs of points. For generic realisations, the size of the solution set depends only on the underlying graph so long as we allow for complex solutions. We provide a characterisation of the realisation number -- that is the cardinality of this complex solution set -- of a minimally rigid graph. Our characterisation uses tropical geometry to express the realisation number as an intersection of Bergman fans of the graphic matroid. As a consequence, we derive a combinatorial upper bound on the realisation number involving the Tutte polynomial. Moreover, we provide computational evidence that our upper bound is usually an improvement on the mixed volume bound.
\end{abstract}

\maketitle

\section{Introduction}

A \emph{$d$-dimensional bar-joint framework}  $(G,p)$ is an ordered pair consisting of a graph $G$ and a map $p:V\rightarrow \mathbb{R}^d$. For brevity we will simply use framework if the dimension is implicit. The map $p$ is often referred to as a \emph{realisation} of $G$.
The framework $(G,p)$ is \emph{rigid} if all edge-length preserving continuous deformations of the vertices are isometries of $\mathbb{R}^d$. The study of rigidity is classical, dating back to work of Cauchy \cite{Cau} and Euler \cite{euler} on convex polytopes.

Determining if a framework is rigid is computationally challenging \cite{Abbot} and hence most recent works have focussed on the generic case.
In the generic case, determining if a framework is rigid reduces to a purely graph-theoretic property that can be characterised by the rank of a matrix associated to the graph \cite{AsimowRothI}.
Even for non-generic frameworks, the predictions from the generic analysis have been applied to a myriad of real-world practical applications including computer-aided geometric design \cite{CAD}, stability of mechanical structures \cite{ConnellyGuest} and modelling for crystals and other materials \cite{Periodic,Theran_etal}.

It is a well known fact \cite{AsimowRothI} that either almost all $d$-dimensional frameworks of a given graph are rigid, or almost all are not rigid;
if the former holds, we say that the graph $G$ is \emph{$d$-rigid}.
Furthermore, $G$ is \emph{minimally $d$-rigid} if it is $d$-rigid and $G-e$ is not for any edge $e$ of $G$.
An example of a 2-rigid graph is the unique graph obtained from the complete graph on four vertices by deleting an edge. This will be denoted by $K_4^-$, see \Cref{fig:k4-e}.
It is folklore that minimally 1-rigid graphs are exactly trees, and an elegant combinatorial description of minimally 2-rigid graphs was provided by Pollaczek-Geiringer \cite{PollaczekGeiringer}.

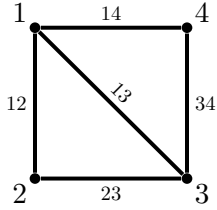
\begin{figure}[t]
	\begin{center}
        \begin{tikzpicture}[scale=1]
        \begin{scope}
			\node[vertex] (1) at (0,1) {};
			\node[vertex] (2) at (0,-1) {};
			\node[vertex] (3) at (2,-1) {};
			\node[vertex] (4) at (2,1) {};

			\node (v1) at (-0.2,1.2) {1};
			\node (v2) at (-0.2,-1.2) {2};
			\node (v3) at (2.2,-1.2) {3};
			\node (v4) at (2.2,1.2) {4};

			\draw[edge] (1) -- (2) node[midway, left, scale=.7] {12};
			\draw[edge] (1) -- (3) node[midway,above,sloped,scale=.7] {13};
			\draw[edge] (1) -- (4) node[midway,above,scale=.7] {14};
			\draw[edge] (2) -- (3) node[midway,below,scale=.7] {23};
			\draw[edge] (3) -- (4) node[midway,right,scale=.7] {34};
        \end{scope}
		\end{tikzpicture}
	\end{center}
 \caption{$K_4^-$ is a minimally 2-rigid graph with 2-realisation number 2.}\label{fig:k4-e}
\end{figure}

Another natural question is as follows. Given a graph $G$ and a generic realisation $(G,p)$ in $\mathbb{R}^d$, how many other realisations $(G,q)$ in $\mathbb{R}^d$ have the same edge lengths? If the graph is $d$-rigid, then up to isometric transformations this number is finite and called the \emph{$d$-realisation number}. Determining the $d$-realisation number has applications in sensor network localisation \cite{Sensors}, conformation change in biological structures \cite{Biol} and control of autonomous systems of robots \cite{Formations}.

Theoretically, the $d$-realisation number can be obtained by symbolic computation techniques such as Gr\"{o}bner basis based algorithms. However these are computationally intractable in practice. Asymptotic upper bounds were computed as complex bounds of the determinantal variety of the Euclidean distance matrix by Borcea and Streinu \cite{BS04}. In the case when $d=2$, mixed volume techniques have also been used \cite{Stef10}. Jackson and Owen \cite{JO19} established the 2-realisation number, or bounds on it, for several families of graphs including planar graphs and graphs whose rigidity matroid is connected.

An improvement on symbolic Gr\"obner basis computations are probabilistic numerical algorithms that randomly sample from the space of frameworks. However, despite the speed gain, they are still quite slow and provide only a lower bound to the realisation number.
Currently, the best algorithm known appears in \cite{cggkls} and has an implementation at \cite{TropicalAlg}. The algorithm uses tropical geometry and a recursive deletion-contraction construction on an auxiliary combinatorial object called a \emph{bigraph}. Due to the recursive nature of the algorithm, the time complexity is still exponential.

We will approach the realisation number problem using tropical geometry, a combinatorial analogue of algebraic geometry.
Its early successes were within enumerative algebraic geometry, where it was utilised as a combinatorial method for computing invariants such as Gromov-Witten invariants~\cite{Mikhalkin05} or (re-)proving formulas such as the Caporaso-Harris formula \cite{GathmannMarkwig2007}. 
More recently, it has played a vital role in the development of intersection theory for matroids, in which the \emph{Bergman fan} of a matroid can be viewed as a tropical variety.
These innovations were key to solving a number of outstanding conjectures on log-concavity within matroid theory~\cite{AHK:18}.

Tropical geometry has already found applications in rigidity theory. As previously mentioned, the realisation number algorithm of \cite{cggkls} uses tropical geometry working over the field of Puiseux series. Bernstein and Krone \cite{Bernstein} analysed the tropical Cayley-Menger variety to give a new proof of Pollaczek-Geiringer's characterisation of minimally 2-rigid graphs. Ideas from tropical geometry, via valuation theory, have also been used to understand when 2-rigid graphs have flexible realisations \cite{GLS2019}.
In the other direction, rigidity has recently found applications in tropical geometry. In particular, infinitesimal rigidity and the Maxwell-Cremona correspondence were used to understand extremal decompositions for tropical varieties \cite{Farhad}.

\medskip
\noindent \textbf{Our approach.}
Given a minimally $d$-rigid graph $G$, we study the generic number of solutions, or the \textit{generic root count}, of the \textit{edge-length} and \textit{vertex-pinning} polynomials for $G$ described in \Cref{def:polynomialSystemVertexVariables}. We show that the generic root count is equal to the $d$-realisation number of $G$ and then modify these equations to use \textit{edge-length variables} as in \Cref{def:polynomialSystemEdgeVariables}. In the case when $d = 2$, we perform a sequence of modifications to the polynomials in the edge-length variables to prove our main result, \Cref{thm:twoDimensionalRealisationNumber}, which shows that the $2$-realisation number of a minimally $2$-rigid graph is a tropical intersection product of: a Bergman fan $\Trop(M_G)$, its negative $-\Trop(M_G)$, and a hyperplane, where $M_G$ is the graphic matroid of $G$.

We use the description of the $2$-realisation number as a tropical intersection product to give a combinatorial characterisation in \Cref{thm:arboreal}, which allows us to give bounds in terms of matroid invariants. The \textit{non-broken circuit bases} (nbc-bases) of a matroid $M_G$ are a special subset of bases, see \Cref{def:nbc-basis}, whose size is equal to the evaluation of the Tutte polynomial $T_{M_G}(1,0)$. See \Cref{cor:alt+bound} for some alternative combinatorial descriptions for this value. We show that the number of nbc-bases is equal to the tropical intersection product of the negative Bergman fan $-\Trop(M_G)$ of $G$, the Bergman fan of the uniform matroid $\Trop(U_{m, m-k+1})$, and a tropical hyperplane $\Trop(y_\epsilon - 1)$. Observe that this tropical intersection product is obtained by replacing one copy of the graphic matroid in the tropical intersection product in \Cref{thm:twoDimensionalRealisationNumber} with the uniform matroid $U_{m, m-k+1}$. Intuitively, the Bergman fan of the uniform matroid is bigger than the graphic matroid $M_G$, hence this replacement cannot increase the tropical intersection product, i.e., the number nbc-bases gives an upper bound on the $2$-realisation number. In \Cref{subsec: computations}, we observe that for minimally rigid graphs with at most $10$ vertices, the number of nbc-bases provides a significantly more accurate upper bound than the mixed volume bound.

\medskip
\noindent \textbf{Statement of main result.}
Our main theorem describes the 2-realisation number of a graph $G$, here denoted by $c_2(G)$, by a tropical intersection product involving the Bergman fan $\Trop(M_G)$ and its `flip' $-\Trop(M_G)$.
Here we use $X \cdot Y \cdot Z$ to represent the tropical intersection product of tropical varieties $X,Y, Z$ (see \Cref{def:tropicalIntersectionProduct}),
and $\Trop (f)$ to represent the tropical hypersurface of the polynomial $f$. 

\begin{theorem*}[\Cref{thm:twoDimensionalRealisationNumber}]
  Let $G=([n],E)$ be a minimally 2-rigid graph with $n \geq 3$ vertices and an edge $e\in E$.
  Then the following equality holds:
  \[  c_2(G) = \frac{1}{2}(-\Trop(M_G))\cdot \Trop(M_G)\cdot \Trop(y_e-1). \]
\end{theorem*}

The final term of the tropical intersection in \Cref{thm:twoDimensionalRealisationNumber} may be interpreted as `pinning' a single edge into place.
This is a common approach for computing 2-realisation numbers as it removes all isometries except the single reflection which fixes in place the pinned edge.

While we regard Bergman fans $\Trop(M)$ as fans in Euclidean space $\RR^n$ for the sake of consistency with general tropical varieties,  it is not uncommon to regard them as fans $\overline{\Trop(M)}$ in the tropical torus $\RR^n/(1,\dots,1)\cdot \RR$; see for example \cite[Section 4.2]{MaclaganSturmfels15}.  Using said notation, the statement in \cref{thm:main} can be simplified to
\begin{equation*}
   c_2(G) = \frac{1}{2}(-\overline{\Trop(M_G)})\cdot \overline{\Trop(M_G)},
\end{equation*}
where ``\,$\cdot$\,'' instead denotes the number of points counted with multiplicity in the stable intersection in $\RR^n/(1,\dots,1)\cdot \RR$.

\subsection{Outline}

In \Cref{sec:prelim}, we give the necessary preliminaries from rigidity theory and tropical geometry, including about realisation numbers, generic root counts, and matroid theory.
\Cref{sect:numberViaTropInt} builds up to the key result, \Cref{thm:main}, that expresses the 2-realisation number of a minimally 2-rigid graph as a tropical intersection product.
In \Cref{sec:realisations_via_matroid_intersection}, we use \Cref{thm:main} to deduce a number of results on the 2-realisation number.
The first main result is \Cref{thm:arboreal}, a combinatorial characterisation in terms of chains of flats of the graphic matroid of $G$.
The second main result is \Cref{thm:nbc+bases+upper+bound}, an upper bound on $c_2(G)$ in terms of the number of nbc-bases of $M_G$, or equivalently, as an evaluation of the Tutte polynomial of $G$.
We end with \Cref{cor:lower+bound}, a combinatorial lower bound on $c_2(G)$ in terms of special bases of $M_G$.


\section{Preliminaries}
\label{sec:prelim}

In this section, we cover the preliminaries for rigidity theory, algebraic geometry, and tropical geometry needed throughout the paper.
First, we remark on the notion of `genericity' used in this paper.

Given an algebraic set $S$ over a field $K$, we say that a property $P$ of the points in $S$ holds for \emph{almost all} points of $S$ or \emph{generic} points of $S$, if $P$ holds for all points in some non-empty Zariski open subset of $S$.
For $K=\RR$ or $K=\CC$ specifically, the definition of `generic' used here is a strictly weaker notion than what is usually used in most rigidity theory literature. There, a point $(x_i)_{i \in [n]} \in \CC^n$ is `generic' if $x_1,\dots,x_n$ are algebraically independent over $\mathbb{Q}$.

\subsection{Rigidity preliminaries}

For any positive integer $n$, we denote the first $n$ positive integers by $[n] := \{1,\ldots,n\}$, and we denote the set of all $2$-tuples of distinct elements in $[n]$ by $\binom{[n]}{2}$. Throughout the entire article, $G$ will be a simple undirected graph with vertex set $[n]$ and edge set $E(G) \subseteq \binom{[n]}{2}$. We denote by $K_n$ the complete graph on $[n]$.

It is a well-established fact in algebraic geometry that understanding the real solutions of a set of equations is difficult, while understanding the complex solutions is (although admittedly still difficult) easier.
With this in mind, we instead wish to `complexify' our concept of rigidity.
To do so,
we define the \emph{(complex) rigidity map} to be the multivariable map
\begin{equation*}
    f_{G,d}\colon \mathbb{C}^{n\cdot d} \rightarrow \mathbb{C}^{|E(G)|} , \quad \big(p_{i,k}\big)_{\subalign{i&\in[n]\\ k&\in[d]}} \mapsto \frac{1}{2}\left(\sum_{k=1}^d(p_{i,k}-p_{j,k})^2 \right)_{ij\in E}.
\end{equation*}
We define any point $p \in \mathbb{C}^{n\cdot d}$ to be a \emph{realisation} of $G$,
with $p_i = (p_{i,k})_{k \in [d]}$ representing the position of vertex $i$.
If there is a need to distinguish between whether a realisation lies in $\mathbb{R}^{n\cdot d}$ or in $\mathbb{C}^{n\cdot d}$,
we will explicitly state that it is either a \emph{real realisation} or a \emph{complex realisation} of $G$ respectively.

Given $O(d, \mathbb{C})$ is the group of $d \times d$ complex-valued matrices $M$ where $M^T M = M M^T = I$,
we define two realisations $p,q \in \mathbb{C}^{n \cdot d}$ to be \emph{congruent} (denoted by $p \sim q$) if and only if there exists $A \in O(d, \mathbb{C})$ and $x \in \mathbb{C}^{d}$ so that $p_i = Aq_i + x$ for all $i \in [n]$.
If the set of vertices of $(G,p)$ affinely span $\mathbb{C}^d$,
we have the following equivalent statement:
two realisations $p,q$ are congruent if and only if $f_{K_n,d}(p) = f_{K_n,d}(q)$ (see \cite[Section 10]{Gortler2014} for more details).

We can now give an alternative characterisation of generic rigidity using our complexified system of constraint equations (i.e., the map $f_{G,d}$) using the following result.

\begin{proposition}[see, for example, {\cite[Proposition 3.5]{dewar2024number}}]\label{prop:rigidequiv}
    For a graph $G$ with $n \geq d+1$, the following are equivalent:
    \begin{enumerate}
        \item $G$ is $d$-rigid;
        \item for almost all realisations $p$, the set $f^{-1}_{G,d} (f_{G,d}(p))/\sim$ is finite;
        \item for almost all points $\lambda$ in the Zariski closure of $f_{G,d}(\CC^{n \cdot d}
        )$, the set $f_{G,d}^{-1}(\lambda)/\sim$ is finite.
    \end{enumerate}
\end{proposition}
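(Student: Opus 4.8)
The plan is to establish the cycle of implications $(1)\Leftrightarrow(\star)\Rightarrow(3)\Rightarrow(2)\Rightarrow(\star)$, where $(\star)$ denotes the numerical statement
\[ (\star)\qquad \dim \overline{f_{G,d}(\CC^{nd})} = nd-\binom{d+1}{2}. \]
For $(1)\Leftrightarrow(\star)$ I would pass through classical rigidity theory. The Jacobian of $f_{G,d}$ at a realisation $p$ is, up to the factor $\tfrac12$ visible in the definition, the rigidity matrix $R(G,p)$, and for a polynomial map over $\CC$ the dimension of the Zariski closure of the image equals the maximal rank attained by the Jacobian. So the left-hand side of $(\star)$ is the generic rank of $R(G,p)$, and this rank is the same over $\RR$ as over $\CC$, being governed by the non-vanishing of a fixed finite family of polynomial minors. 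By the Asimow--Roth theorem \cite{AsimowRothI} (generic rigidity coincides with generic infinitesimal rigidity, i.e.\ with $R(G,p)$ having maximal rank) together with the fact that $n\ge d+1$ forces $nd-\binom{d+1}{2}$ to be that maximal rank, $G$ is $d$-rigid if and only if $(\star)$ holds.

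For the remaining implications I would combine the theorem on the dimension of fibres with the geometry of the congruence action of the group $\Gamma:=O(d,\CC)\ltimes\CC^{d}$, which has dimension $\binom{d+1}{2}$. Three elementary observations drive everything: every congruence class $\{q:q\sim p\}$ is contained in the fibre $f_{G,d}^{-1}(f_{G,d}(p))$; every congruence class has dimension at most $\dim\Gamma=\binom{d+1}{2}$; and when $p$ affinely spans $\CC^{d}$ — the generic case, since $n\ge d+1$ — its congruence class has dimension exactly $\binom{d+1}{2}$. Moreover, since $\CC^{nd}$ is irreducible, every component of every fibre of $f_{G,d}$ has dimension at least $nd-\dim\overline{f_{G,d}(\CC^{nd})}$, while a generic fibre has dimension exactly this; hence generic fibres are pure of this dimension. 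Now, if $(2)$ holds, then for generic $p$ the fibre $f_{G,d}^{-1}(f_{G,d}(p))$ is a finite union of congruence classes, each of dimension at most $\binom{d+1}{2}$ and one of them (that of $p$ itself) of dimension exactly $\binom{d+1}{2}$; so the fibre has dimension $\binom{d+1}{2}$, and comparing with the generic-fibre dimension computed above gives $(\star)$. Conversely, if $(\star)$ holds then a generic fibre is pure of dimension $\binom{d+1}{2}$, and — by the point addressed below — is a finite union of congruence classes, which is $(3)$. Finally $(3)\Rightarrow(2)$ is immediate: $f_{G,d}$ is dominant, so the preimage of the dense open set of $\lambda$ for which $f_{G,d}^{-1}(\lambda)/\sim$ is finite is a dense open set of realisations $p$ for which $f_{G,d}^{-1}(f_{G,d}(p))/\sim$ is finite.

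The one step needing genuine work — the main obstacle — is checking that a \emph{pure} $\binom{d+1}{2}$-dimensional fibre really consists of only finitely many congruence classes, since a priori it could contain a positive-dimensional family of lower-dimensional classes arising from realisations that fail to affinely span $\CC^{d}$. I would exclude this by showing that a generic point $\lambda$ of $\overline{f_{G,d}(\CC^{nd})}$ does not lie in $\overline{f_{G,d'}(\CC^{nd'})}$ for any $d'<d$. Embedding $\CC^{d'}\hookrightarrow\CC^{d}$ as a coordinate subspace gives $\overline{f_{G,d'}(\CC^{nd'})}\subseteq\overline{f_{G,d}(\CC^{nd})}$, and the inclusion is strict because $\dim\overline{f_{G,d'}(\CC^{nd'})}\le nd'-\binom{d'+1}{2}$ (again as generic realisations in $\CC^{nd'}$ span $\CC^{d'}$, since $n\ge d'+1$) and $nd'-\binom{d'+1}{2}<nd-\binom{d+1}{2}$, the latter a one-line computation from $n\ge d+1$. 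So a generic $\lambda$ avoids the proper closed subset $\bigcup_{d'<d}\overline{f_{G,d'}(\CC^{nd'})}$ of $\overline{f_{G,d}(\CC^{nd})}$, and then every realisation in the fibre $f_{G,d}^{-1}(\lambda)$ affinely spans $\CC^{d}$. Consequently every congruence class meeting this fibre has full dimension $\binom{d+1}{2}$, so none of them lies in the closure of another and each is closed in the fibre; since the fibre has only finitely many irreducible components and is partitioned by these closed congruence classes, only finitely many of them occur. Together with the previous paragraph this yields $(\star)\Rightarrow(3)$ and closes the argument.
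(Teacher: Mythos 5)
The paper does not actually prove this proposition: it is imported wholesale with a citation to \cite{dewar2024number}, so there is no in-paper argument to compare against, and your self-contained proof is necessarily a different route. Your overall scheme is sound and well organised: reducing everything to the numerical condition $(\star)$ via Asimow--Roth together with the fact that the generic Jacobian rank (the rank of the rigidity matrix, equal over $\RR$ and $\CC$ because it is governed by minors) computes $\dim\overline{f_{G,d}(\CC^{nd})}$; then using the fibre-dimension theorem plus the orbit geometry of $O(d,\CC)\ltimes\CC^d$ to close the cycle $(\star)\Rightarrow(3)\Rightarrow(2)\Rightarrow(\star)$. This is in the same spirit as the cited source (congruence orbits versus generic fibre dimension), just written out from scratch.

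Two steps are stated without the justification they genuinely need. First, you pass from ``$\lambda$ avoids $\bigcup_{d'<d}\overline{f_{G,d'}(\CC^{nd'})}$'' to ``every $p\in f_{G,d}^{-1}(\lambda)$ affinely spans $\CC^d$''; the contrapositive you are using is that a non-spanning complex realisation has its edge-value vector in the image of some $f_{G,d'}$ with $d'<d$. Over $\RR$ this is the obvious ``move the affine hull into a coordinate subspace by an isometry'', but over $\CC$ that argument breaks down when the quadratic form $\sum_k z_k^2$ degenerates on the affine hull (isotropic subspaces exist), so a $d'$-dimensional affine subspace need not be congruent to the coordinate $\CC^{d'}$. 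The claim is nevertheless true: the complex symmetric Gram matrix of the difference vectors $p_i-p_n$ has rank at most $d'$, and over $\CC$ any symmetric matrix of rank at most $d'$ factors as $W^TW$ with $W\in\CC^{d'\times(n-1)}$, whose columns give a realisation in $\CC^{d'}$ with the same edge values; this (or an equivalent Cayley--Menger argument) should be said explicitly, since the naive rotation argument is exactly what fails in the complexified setting. Second, in deducing finitely many congruence classes from ``the fibre is partitioned by closed classes and has finitely many irreducible components'', note that an irreducible variety can be partitioned into infinitely many closed subsets, so the count does not follow from closedness alone; you need the dimension input you already established, namely that each class is closed of dimension equal to the pure dimension $\binom{d+1}{2}$ of the fibre, whence each irreducible component of each class is an irreducible component of the fibre and the number of classes is bounded by the number of components (similarly, the closedness of the orbits uses the standard fact that the boundary of an orbit closure is a union of strictly lower-dimensional orbits, which here would consist of non-spanning points excluded from the fibre). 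With these two points made explicit, your argument is complete.
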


Equivalently to the definition given in the introduction, we can say that
a $d$-rigid graph $G$ is minimally $d$-rigid if the Jacobian of $f_{G,d}$ at a generic point of $\CC^{n \cdot d}$ is surjective.
This condition can be characterised using our complexified set-up with the following equivalence which follows from \cite[Lemma 3.1]{dewar2024number}.

\begin{proposition}\label{prop:minrigidequiv}
    For a graph $G$ with $n \geq d+1$, the following are equivalent:
    \begin{enumerate}
        \item $G$ is minimally $d$-rigid;
        \item $G$ is $d$-rigid and $f_{G,d}$ is \emph{dominant}, i.e., the Zariski closure of $f_{G,d}(\CC^{n \cdot d}
        )$ is $\mathbb{C}^{|E(G)|}$.
    \end{enumerate}
\end{proposition}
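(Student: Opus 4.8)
The plan is to obtain the proposition by combining two ingredients: the characterisation of minimal $d$-rigidity in terms of the Jacobian of $f_{G,d}$ recalled just above the statement, and the standard fact that over $\CC$ a polynomial map is dominant exactly when its Jacobian has full rank at a generic point. Write $m := |E(G)|$ and, for $p \in \CC^{nd}$, let $J_p$ denote the $m \times nd$ Jacobian matrix of $f_{G,d}$ at $p$ (a scalar multiple of the rigidity matrix of $(G,p)$, with rows indexed by $E(G)$). As noted before the statement, for a $d$-rigid graph the introductory definition of minimal $d$-rigidity is equivalent to the rows of $J_p$ being linearly independent for generic $p$, i.e.\ to the linear map $J_p\colon\CC^{nd}\to\CC^m$ being surjective for generic $p$: an edge of a $d$-rigid graph may be deleted without destroying rigidity exactly when it lies in a circuit of the $d$-dimensional rigidity matroid, equivalently when the rows of $J_p$ are generically dependent --- this is \cite[Lemma 3.1]{dewar2024number}. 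Granting this, it suffices to prove that for an arbitrary graph $G$ the map $f_{G,d}$ is dominant if and only if $J_p$ is surjective for generic $p$, since both statements in the proposition then read ``$G$ is $d$-rigid and $J_p$ is generically surjective''.

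For this equivalence I would argue as follows. The image $f_{G,d}(\CC^{nd})$ is constructible, and its Zariski closure $Z := \overline{f_{G,d}(\CC^{nd})}$ is an irreducible closed subvariety of $\CC^m$, being the closure of the image of the irreducible variety $\CC^{nd}$. Since we work in characteristic $0$, the generic rank of the Jacobian of a morphism from a smooth irreducible variety equals the dimension of the closure of its image; thus $\max_{p}\rank J_p = \dim Z$, with the maximum attained on a dense Zariski-open subset of $\CC^{nd}$. Therefore $J_p$ is surjective for generic $p$ precisely when $\rank J_p = m$ for generic $p$, which holds precisely when $\dim Z = m$; and since $Z$ is an irreducible closed subvariety of $\CC^m$, this forces $Z = \CC^m$, i.e.\ $f_{G,d}$ is dominant. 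Running this through both directions: if $G$ is minimally $d$-rigid then it is $d$-rigid and, by \cite[Lemma 3.1]{dewar2024number}, $J_p$ is generically surjective, so $f_{G,d}$ is dominant; conversely, if $G$ is $d$-rigid and $f_{G,d}$ is dominant, then $J_p$ is generically surjective, so $G$ is minimally $d$-rigid.

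I do not expect a real obstacle: the argument is bookkeeping around two standard facts. The only delicate point is the clean passage between the combinatorial definition of minimal $d$-rigidity (no edge is redundant for rigidity) and the algebraic condition that the Jacobian of $f_{G,d}$ is generically surjective, which is exactly where the hypothesis $n\ge d+1$ is needed --- it ensures a generic realisation affinely spans $\CC^d$, so the rigidity matroid of $K_n$ has its expected rank and deleting a circuit edge preserves the generic rank of $J_p$ while deleting an edge of an independent set strictly lowers it. This is supplied by \cite[Lemma 3.1]{dewar2024number}; a self-contained account would simply reprove it through the rigidity matroid.
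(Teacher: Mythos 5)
Your argument is correct and follows essentially the route the paper indicates: the paper gives no standalone proof, deducing the statement from the Jacobian characterisation of minimal $d$-rigidity together with \cite[Lemma 3.1]{dewar2024number}, and your proposal uses exactly these ingredients, merely filling in the standard characteristic-zero fact that dominance of $f_{G,d}$ is equivalent to generic surjectivity of its Jacobian.
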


To define a classical necessary condition for $d$-rigidity, we require the following terminology.
For natural numbers $k$ and $\ell$, a graph $G=(V,E)$ is \emph{$(k,\ell)$-sparse} if every subgraph $(V',E')$ with $|V'| \geq k$ has $|E'|\leq k|V'|-\ell$. Further it is \emph{$(k,\ell)$-tight} if $|E|=k|V|-\ell$ and it is $(k,\ell)$-sparse.

\begin{lemma}[{\cite[Lemma 11.1.3]{Whiteley:1996}}]\label{lem:maxwelllike} 
    Let $G$ be a $d$-rigid graph with $|V| \geq d$.
    Then $G$ contains a spanning subgraph which is $(d, \binom{d+1}{2})$-tight.
    If $G$ is minimally $d$-rigid, then $G$ is $(d, \binom{d+1}{2})$-tight.
\end{lemma}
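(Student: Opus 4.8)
The plan is to run the classical ``Maxwell count'' argument, exploiting that the Jacobian $R(G,p) := Df_{G,d}(p)$ --- the \emph{rigidity matrix} --- detects $d$-rigidity through its generic rank. Recall $R(G,p)$ is the $|E(G)|\times dn$ matrix whose row indexed by $ij\in E(G)$ is supported only in the $i$- and $j$-blocks, where it equals $p_i-p_j$ and $p_j-p_i$.

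First I would record the elementary fact underlying everything. For any $m\ge d$, any graph $H$ on $m$ vertices, and any generic $q\in\CC^{md}$, the derivatives of rigid motions --- the vectors $(Sq_i+t)_{i\in[m]}$ with $S$ a skew-symmetric $d\times d$ matrix and $t\in\CC^d$ --- lie in $\ker R(H,q)$ and span a subspace of dimension exactly $\binom{d+1}{2}$: surjectivity onto this subspace from the $\binom{d+1}{2}$-dimensional space of pairs $(S,t)$ is immediate, and injectivity holds because a skew-symmetric matrix vanishing on $d$ affinely independent points has even rank at most $1$, hence is $0$, which then forces $t=0$. In particular $\rank R(H,q)\le dm-\binom{d+1}{2}$ for generic $q$, and by the Asimow--Roth rank criterion \cite{AsimowRothI} this is an equality whenever $H$ is $d$-rigid.

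Assume $G$ is $d$-rigid and fix $p\in\CC^{dn}$ generic for all of the finitely many conditions used below. Then $\rank R(G,p)=dn-\binom{d+1}{2}$, so I can choose $E'\subseteq E(G)$ indexing a maximal linearly independent set of rows, giving $|E'|=dn-\binom{d+1}{2}$. I claim $G':=([n],E')$ is $(d,\binom{d+1}{2})$-sparse; with the edge count and $n\ge d$, this exhibits the $(d,\binom{d+1}{2})$-tight spanning subgraph required by the statement. For sparsity, take $W\subseteq[n]$ with $|W|\ge d$ and put $F:=E'\cap\binom{W}{2}$. The rows of $R(G,p)$ indexed by $F$ are supported entirely in the $d|W|$ columns attached to $W$, and the resulting submatrix is the rigidity matrix of the sub-framework $\big((W,F),p|_W\big)$; by the elementary fact its rank is at most $d|W|-\binom{d+1}{2}$. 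Since $F\subseteq E'$, these rows are linearly independent, so $|F|\le d|W|-\binom{d+1}{2}$.

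Finally, if $G$ is minimally $d$-rigid then $f_{G,d}$ is dominant by \Cref{prop:minrigidequiv}, so $\rank R(G,p)=|E(G)|$ at generic $p$; together with the previous step this gives $|E(G)|=dn-\binom{d+1}{2}$ and shows all rows of $R(G,p)$ are independent, so running the sparsity argument with $E'=E(G)$ proves $G$ itself is $(d,\binom{d+1}{2})$-tight. The one external ingredient, and the only genuine obstacle, is the Asimow--Roth equality for $d$-rigid graphs; the degenerate case $n=d$ (where the points span only a hyperplane) causes no trouble, since the trivial-motion count above still yields $\binom{d+1}{2}$, and there $d$-rigidity forces $G=K_d$, which is visibly $(d,\binom{d+1}{2})$-tight.
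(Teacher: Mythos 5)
Your proof is correct. The paper does not prove this lemma at all --- it simply quotes it from Whiteley --- and your rigidity-matrix/Maxwell-count argument is precisely the classical proof behind that citation: the trivial-motion kernel of dimension $\binom{d+1}{2}$ (your skew-symmetric injectivity argument is valid for any $m \geq d$ generic points, which is exactly what the sparsity sub-count needs), the Asimow--Roth rank equality for the $d$-rigid graph $G$ itself, a maximal independent set of rows giving the tight spanning subgraph, and dominance of $f_{G,d}$ (via \Cref{prop:minrigidequiv}, i.e.\ generic surjectivity of the Jacobian) forcing $|E| = dn - \binom{d+1}{2}$ in the minimally rigid case. The only external ingredient is the Asimow--Roth criterion \cite{AsimowRothI}, which you flag explicitly, and your separate treatment of the degenerate case $n = d$ is sound, so there is no gap.
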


Pollaczek-Geiringer \cite{PollaczekGeiringer} characterised minimally 2-rigid graphs as precisely the $(2,3)$-tight graphs (sometimes called \emph{Laman graphs} in the literature). Her result was independently rediscovered by
Laman \cite{Laman} and, as a result, is often referred to as Laman's theorem.

\begin{theorem}[Pollaczek-Geiringer \cite{PollaczekGeiringer}]
    A graph with at least two vertices is minimally 2-rigid if and only if it is $(2,3)$-tight.
\end{theorem}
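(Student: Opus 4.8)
The plan is to prove the two implications separately. The forward direction is essentially already in hand: if $G$ is minimally $2$-rigid then \Cref{lem:maxwelllike} applied with $d=2$ (so that $\binom{d+1}{2}=3$) gives that $G$ is $(2,3)$-tight. The real work is the converse, which I would prove by induction on $n=|V(G)|$, showing that every $(2,3)$-tight graph is minimally $2$-rigid. The engine of the induction is the classical Henneberg construction of $(2,3)$-tight graphs (see, e.g., \cite{Whiteley:1996}): peel off a suitable vertex of $G$ to obtain a smaller $(2,3)$-tight graph, apply the inductive hypothesis, and then use \Cref{prop:rigidequiv} and \Cref{prop:minrigidequiv} to transport minimal $2$-rigidity back up.

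The combinatorial reduction step goes as follows. The base case $n=2$ forces $G=K_2$, which is minimally $2$-rigid by direct inspection. For $n\geq 3$, since $\sum_v\deg(v)=2|E(G)|=4n-6<4n$ there is a vertex $w$ with $\deg(w)\in\{2,3\}$. If $\deg(w)=2$, then $G-w$ is again $(2,3)$-tight and $G$ is a $0$-extension of it (adjoin a degree-$2$ vertex). If $\deg(w)=3$ with neighbours $a,b,c$, then at least one pair among $\{a,b\},\{a,c\},\{b,c\}$ is a non-edge (otherwise $\{a,b,c,w\}$ would induce $K_4$, contradicting sparsity), and I claim $(G-w)+ab$ is $(2,3)$-tight for some such non-edge $ab$, so that $G$ is a $1$-extension (delete $ab$, adjoin a vertex joined to $a,b,c$). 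This is the one step that needs genuine combinatorial care: if no admissible edge-addition worked, each would be obstructed by a $(2,3)$-tight subgraph of $G-w$ containing the corresponding pair of vertices, and a submodularity argument for the function $H\mapsto 2|V(H)|-|E(H)|$, combined with an inclusion--exclusion count over these obstructing subgraphs, yields a single $(2,3)$-tight subgraph of $G-w$ containing all of $a,b,c$; adjoining $w$ to it then violates $(2,3)$-sparsity of $G$.

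It remains to check that $0$- and $1$-extensions preserve minimal $2$-rigidity. Write $G'$ for the smaller $(2,3)$-tight graph, minimally $2$-rigid by induction, so that the Jacobian of $f_{G',2}$ has the maximal rank $2|V(G')|-3$ at a generic realisation $p'$ (equivalently, its kernel is exactly the $3$-dimensional space of infinitesimal congruences). For a $0$-extension I extend $p'$ by a generic point $p_w$; since $p_w$ lies off the line through its two neighbours, the two new rows of the Jacobian eliminate the $w$-coordinates completely and the kernel stays $3$-dimensional. For a $1$-extension deleting $uv$, the key trick is to place $p_w$ \emph{on the line through $p'_u$ and $p'_v$}, distinct from both: the constraints $wu,wv$ then force $(p'_u-p'_v)\cdot(\dot p_u-\dot p_v)=0$, so any infinitesimal flex of $(G,p)$ restricts to an infinitesimal flex of $(G',p')$, hence is a congruence on the old vertices, after which the remaining constraint $wx$ (for generic $p_x$) kills the last degree of freedom at $w$. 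Either way the Jacobian of $f_{G,2}$ has rank $2|V(G)|-3$ at this particular realisation; since maximal rank is a Zariski-open condition on realisations, it holds at a generic realisation too, so $G$ is $2$-rigid by \Cref{prop:rigidequiv}. Finally, because $|E(G)|=2|V(G)|-3$ equals this rank, the Jacobian is surjective at a generic point, $f_{G,2}$ is dominant, and $G$ is minimally $2$-rigid by \Cref{prop:minrigidequiv}.

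The step I expect to be the main obstacle is the degree-$3$ reduction, i.e.\ showing that some edge can always be added back to $G-w$ while preserving $(2,3)$-tightness. The base case, the Zariski-openness of maximal rank, and the linear algebra behind the collinear placement in the $1$-extension are all short; by contrast, the degree-$3$ reduction is where the submodularity of $2|V|-|E|$ and a careful analysis of how the three potential obstructing tight subgraphs overlap genuinely have to be deployed.
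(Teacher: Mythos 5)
The paper does not prove this statement; it imports it as a classical theorem of Pollaczek-Geiringer (rediscovered by Laman), so there is no internal proof to compare against. What you propose is the standard Henneberg-construction proof, and it is correct in outline: the forward direction is indeed immediate from \Cref{lem:maxwelllike} with $d=2$, and the converse by induction on $0$- and $1$-extensions, with the collinear placement of the new vertex in the $1$-extension step, is the classical argument. Two points deserve to be spelled out more carefully. First, in the degree-$3$ reduction you should note that $(2,3)$-sparsity forces minimum degree at least $2$ (so that $\deg(w)\in\{2,3\}$), and the blocker analysis must cover the cases where two obstructing tight subgraphs meet in only one vertex and where some pairs among the neighbours $a,b,c$ are already edges; in those cases the $(2,3)$-tight subgraph of $G-w$ containing all of $a,b,c$ is obtained only after adjoining the existing edges among $\{a,b,c\}$ to the union of blockers (the counts then force tightness), after which adding $w$ with its three edges violates sparsity as you say. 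Second, your appeal to \Cref{prop:rigidequiv} is slightly indirect: that proposition characterises $2$-rigidity by finiteness of generic fibres modulo congruence, not by Jacobian rank, so you need the standard Asimow--Roth style step that generic Jacobian rank $2n-3$ makes the generic fibre a $3$-dimensional finite union of congruence orbits; the paper cites this equivalence in the introduction but never states it as a proposition, so it must be supplied. With those details added, your argument is complete, and the concluding step, that surjectivity of the Jacobian at a generic point gives dominance and hence minimal $2$-rigidity via \Cref{prop:minrigidequiv}, is fine.
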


\subsection{Realisation numbers}

The following result describes more explicitly what exactly `finite' means in \Cref{prop:rigidequiv}.

\begin{proposition}[see, for example, {\cite[Proposition 3.5]{dewar2024number}}]\label{prop:realisationnum}
    Let $G$ be a $d$-rigid graph with at least $d+1$ vertices.
    Then there exists a positive integer $c_d(G)$ so that for any generic realisation $p$, the set $f^{-1}_{G,d} (f_{G,d}(p))/\sim$ contains exactly $c_d(G)$ points.
\end{proposition}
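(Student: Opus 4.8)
The goal is \Cref{prop:realisationnum}: for a $d$-rigid graph $G$ with at least $d+1$ vertices, the fibre $f^{-1}_{G,d}(f_{G,d}(p))/\sim$ has a constant finite cardinality $c_d(G)$ over generic $p$. The plan is to realise this as an instance of the general principle that a dominant morphism of irreducible varieties has a well-defined generic fibre degree, combined with \Cref{prop:rigidequiv} to guarantee finiteness.

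First I would set up the right map. The source $\CC^{n\cdot d}$ is irreducible, but the relevant target should be the Zariski closure $Y := \overline{f_{G,d}(\CC^{n\cdot d})} \subseteq \CC^{|E|}$, which is irreducible as the closure of the image of an irreducible variety under a morphism. By \Cref{prop:rigidequiv}(3), for almost all $\lambda \in Y$ the fibre $f_{G,d}^{-1}(\lambda)$ is a union of $\sim$-orbits, finitely many of them; in particular $\dim f_{G,d}^{-1}(\lambda) = \dim O(d,\CC)\cdot\CC^d = \binom{d+1}{2}$ over a dense open subset of $Y$, so $\dim Y = n\cdot d - \binom{d+1}{2}$ and the generic fibre dimension is fixed. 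The key step is then to pass from "fibres over generic $\lambda \in Y$" to "fibres over $f_{G,d}(p)$ for generic $p \in \CC^{n\cdot d}$": since $f_{G,d}$ is dominant onto $Y$, the preimage of the dense open "good" locus $U \subseteq Y$ is a dense open subset of $\CC^{n\cdot d}$, so a generic $p$ indeed maps into $U$, and one works with that fibre.

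Next I would extract the constant count. One approach is to invoke the theorem on the generic number of points in fibres (e.g. via generic flatness / upper semicontinuity of fibre dimension, Shafarevich or Hartshorne): restricting $f_{G,d}$ to the preimage of a suitable dense open of $Y$, there is a dense open $V\subseteq Y$ over which the restricted morphism is finite, hence (by generic flatness) flat of some constant degree; the fibre over each $\lambda \in V$ then has a constant number of points when counted with multiplicity. To upgrade this to a constant number of \emph{points} (not just with multiplicity), I would further shrink to the locus where the fibre is reduced — the non-reduced locus is a proper closed subset of $V$ because the generic fibre of a dominant morphism between varieties over a field of characteristic zero is smooth (generic smoothness). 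Alternatively, and perhaps more cleanly for this paper, one can quotient out the group action first: the fibre modulo $\sim$ is a finite set, and one shows the cardinality is locally constant on the parameter space, then constant by irreducibility of $Y$. Either way, call this common cardinality $c_d(G)$; positivity is immediate since the fibre over $f_{G,d}(p)$ contains the orbit of $p$ itself and is nonempty.

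The main obstacle I expect is the bookkeeping around the group quotient: one must check that counting $\sim$-orbits in the fibre is the "right" invariant and behaves well in families — concretely, that over the generic locus the $O(d,\CC)\times\CC^d$-action on $f_{G,d}^{-1}(\lambda)$ is free (or at least has orbits of constant size), so that the orbit count is $\frac{1}{|O(d,\CC)\text{-stabiliser}|}$ times a scheme-theoretic degree, and that freeness holds once the realisations affinely span $\CC^d$, which is a generic condition for $n \geq d+1$. This is exactly the point where the congruence characterisation recalled before \Cref{prop:rigidequiv} (two realisations are congruent iff $f_{K_n,d}$ agrees on them, when the points affinely span) does the work: it identifies the $\sim$-orbit with a fibre of $f_{K_n,d}$, turning the "mod $\sim$" count into an honest generic fibre cardinality of the morphism $f_{G,d}$ factored appropriately. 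Once that identification is in place, the constancy is the standard generic-fibre-degree statement and the rest is routine.
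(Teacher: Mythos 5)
The paper itself does not prove \Cref{prop:realisationnum}; it imports it from the cited reference, so there is no in-paper argument to measure you against. Your outline follows what is essentially the standard route (factor through $f_{K_n,d}$, i.e.\ the measurement variety, and reduce to the constancy of the generic fibre cardinality of a dominant, generically finite morphism of irreducible complex varieties), and the skeleton is sound, but as written it has one misstatement and one genuine gap.

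The misstatement: you cannot shrink to a dense open $V\subseteq Y$ over which $f_{G,d}$ itself is finite. Every nonempty fibre of $f_{G,d}$ contains entire congruence orbits, which for affinely spanning configurations have dimension $\binom{d+1}{2}>0$, so $f_{G,d}$ is never generically finite and generic flatness applied to it produces a positive-dimensional, not a zero-dimensional, generic fibre. The finite/flat/generically-reduced argument must instead be applied to the induced projection $\pi_E\colon \overline{f_{K_n,d}(\CC^{n\cdot d})}\to Y$ coming from the factorisation $f_{G,d}=\pi_E\circ f_{K_n,d}$; your final paragraph gestures at exactly this, but the proof has to be organised around that map from the start, and the stabiliser/``degree divided by stabiliser'' discussion then becomes unnecessary.

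The genuine gap: to identify $\#\bigl(f_{G,d}^{-1}(\lambda)/\!\sim\bigr)$ with the number of points of $\pi_E^{-1}(\lambda)$ on the measurement variety, you need that \emph{every} realisation $q$ in the fibre over a generic $\lambda$ affinely spans $\CC^d$, since the equivalence ``congruent iff equal under $f_{K_n,d}$'' quoted before \Cref{prop:rigidequiv} is only valid on the spanning locus; over $\CC$, non-congruent degenerate realisations (e.g.\ configurations in hyperplanes with isotropic direction space) can share the same $f_{K_n,d}$-value, so orbits and measurement-variety points need not match on the degenerate locus. You only argue that spanning is a generic condition on $p$, which controls the realisation you start with but not the other points of its fibre. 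What is actually required is that the image under $f_{G,d}$ of the non-spanning locus is a \emph{proper} closed subvariety of $Y$, which needs a dimension count exploiting $d$-rigidity and $n\geq d+1$ and is delicate over $\CC$ precisely because isotropic hyperplanes are not congruent to a standard $\CC^{d-1}$ and carry positive-dimensional stabilisers (a crude count of the degenerate locus does not suffice for small $n$). One also needs the routine but necessary remark that generic fibres of $\pi_E$ avoid the locus where the closed measurement variety differs from the constructible image of $f_{K_n,d}$. These are exactly the details the cited reference supplies; with them your argument closes, without them the orbit count and the generic fibre count have not been shown to agree.
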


We now define the value $c_d(G)$ given in \Cref{prop:realisationnum} to be the \emph{$d$-realisation number} of a $d$-rigid graph $G$.

In practice, we are actually more interested in understanding the cardinality of the set $f^{-1}_{G,d} (f_{G,d}(p))/\sim$ when we restrict to real realisations.
It is easy to see that this value is no longer a generic value however -- see \Cref{fig:realrealisations} -- which makes it more challenging to investigate compared to $c_d(G)$.
Fortunately, we can always use the $d$-realisation number of the graph to bound the number of equivalent real realisations (modulo congruences) for a generic framework in $\mathbb{R}^d$.
Note though that this upper bound is not always tight:
the graph pictured in \Cref{fig:prismadapted} (first observed by Jackson and Owen \cite{JO19}) gives such an example.

\begin{figure}[tp]
	\begin{center}
        \begin{tikzpicture}[scale=0.8]
        \begin{scope}
			\node[vertex] (1) at (0,1) {};
			\node[vertex] (2) at (0,-1) {};
			\node[vertex] (3) at (1.5,-0.5) {};
			\node[vertex] (4) at (1.5,0.5) {};
			\node[vertex] (5) at (4,0) {};

            \node[anchor=south east] at (1) {1};
            \node[anchor=north east] at (2) {2};
            \node[anchor=north] at (3) {3};
            \node[anchor=south] at (4) {4};
            \node[anchor=west] at (5) {5};

			\draw[edge] (1)edge(2);
			\draw[edge] (1)edge(3);
			\draw[edge] (1)edge(4);
			\draw[edge] (2)edge(3);
			\draw[edge] (2)edge(4);
			\draw[edge] (5)edge(3);
			\draw[edge] (5)edge(4);
        \end{scope}
	\begin{scope}[xshift=210]
			\node[vertex] (1) at (0,1) {};
			\node[vertex] (2) at (0,-1) {};

			\node[vertex] (3) at (-1.5,-0.5) {};
			\node[vertex] (4) at (1.5,0.5) {};
			\node[vertex] (5) at (-0.5, 1.5) {};

            \node[anchor=south east,yshift=-2mm] at (1) {1};
            \node[anchor=north] at (2) {2};
            \node[anchor=east] at (3) {3};
            \node[anchor=west] at (4) {4};
            \node[anchor=south] at (5) {5};

			\draw[edge] (1)edge(2);
			\draw[edge] (1)edge(3);
			\draw[edge] (1)edge(4);
			\draw[edge] (2)edge(3);
			\draw[edge] (2)edge(4);
			\draw[edge] (5)edge(3);
			\draw[edge] (5)edge(4);

	\end{scope}
         \begin{scope}[xshift=300, yshift=-40]
             \draw (0,0) -- (0,3);
         \end{scope}
         \begin{scope}[xshift=350]
			\node[vertex] (1) at (0,1) {};
			\node[vertex] (2) at (0,-1) {};

			\node[vertex] (3) at (1.5,-0.5) {};
			\node[vertex] (4) at (1.5,0.5) {};
	      \node[vertex] (5) at (2,0) {};

            \node[anchor=south east,yshift=-2mm] at (1) {1};
            \node[anchor=north east] at (2) {2};
            \node[anchor=north west] at (3) {3};
            \node[anchor=south west] at (4) {4};
            \node[anchor=west] at (5) {5};

			\draw[edge] (1)edge(2);
			\draw[edge] (1)edge(3);
			\draw[edge] (1)edge(4);
			\draw[edge] (2)edge(3);
			\draw[edge] (2)edge(4);
			\draw[edge] (5)edge(3);
			\draw[edge] (5)edge(4);
        \end{scope}
		\end{tikzpicture}
	\end{center}
 \caption{On the left and the right are two realisations of the same graph in the plane with different edge lengths. Both sets of edge lengths can be chosen generically. On the left the realisation allows the equivalent realisation in the middle, on the right such a realisation is not possible by the triangle inequality, illustrating that the real realisation number depends on the specific choice of generic realisation.}\label{fig:realrealisations}
\end{figure}
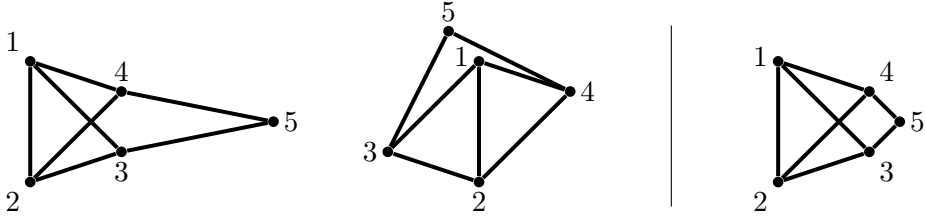

\begin{figure}[tp]
	\begin{center}
            \begin{tikzpicture}[scale=0.5]
			\node[vertex] (11) at (-1,0) {};
			\node[vertex] (21) at (-2,1) {};
			\node[vertex] (31) at (-2,-1) {};

			\node[vertex] (12) at (1,0) {};
			\node[vertex] (22) at (2,1) {};
			\node[vertex] (32) at (2,-1) {};

			\node[vertex] (a) at (-3,2) {};
                \node[vertex] (b) at (3,2) {};

			\draw[edge] (11)edge(12);
			\draw[edge] (21)edge(22);
			\draw[edge] (31)edge(32);

			\draw[edge] (11)edge(21);
			\draw[edge] (31)edge(11);

			\draw[edge] (12)edge(22);
			\draw[edge] (32)edge(12);

			\draw[edge] (a)edge(21);
			\draw[edge] (a)edge(12);
			\draw[edge] (a)edge(31);

                \draw[edge] (b)edge(22);
			\draw[edge] (b)edge(11);
			\draw[edge] (b)edge(32);

		\end{tikzpicture}
	\end{center}
	\caption{A minimally 2-rigid graph $G$ with $c_2(G_2) = 45$. As the number of real equivalent realisations to a generic realisation in $\mathbb{R}^2$ is always even (a corollary of a classical result of Hendrickson \cite{hend92}), the upper bound on this number given by $c_2(G)$ is not tight.}\label{fig:prismadapted}
\end{figure}
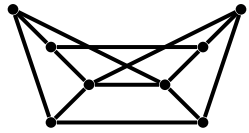

Working with the quotient space $f^{-1}_{G,d}(f_{G,d}(p))/\sim$ is difficult,
since many of the tropical techniques showcased later in the paper are suitable for counting generic fibres of polynomial maps, not their quotients.
Instead of directly quotienting out the congruences,
we can instead restrict our domain to achieve essentially the same effect.
The standard method for doing this in rigidity theory is to `pin' vertices to various points to stop any congruent motions; for example, if $d=3$, we would fix our realisations to have the first vertex fixed at the point $(0,0,0)$, the second vertex restricted to the $x$-axis $\{(x,0,0) : x \in \mathbb{C} \}$,
and the third vertex restricted to the $xy$-plane $\{(x,y,0) : x,y \in \mathbb{C}\}$.
In fact, this exact method is described in \cite[Lemma 3.3]{dewar2024number} with this pinning system.
As zero coordinates behave strangely under tropicalisation (due in part to valuations taking an infinite value at such points),
we have slightly edited this method to allow for a more general pinning system.

\begin{lemma}
\label{lem:realisationNumberAsGenericFibreCardinality}
  Let $G$ be a $d$-rigid graph with $n \geq d+1$.
  Let $b_1,\ldots,b_d \in \RR^d$ be a basis, let $(c_1,\dots,c_d)\coloneqq[b_1\cdots b_d](1,\dots,1)\in\RR^d$ where $[b_1\cdots b_d]\in\RR^{d\times d}$ is the matrix with rows $b_1,\dots,b_d$, and define $Y \subset \CC^{|E(G)|}$ to be the Zariski closure of the image of $f_{G,d}$ and
  \begin{equation*}
    X \coloneqq \left\{ p \in \mathbb{C}^{n\cdot d}\mid p_i \cdot b_l = c_l \text{ for } i \in [d] \text{ and } l \in [d+1-i]  \right\}.
  \end{equation*}
  Then the restricted rigidity map $f_{G,d}|_X^Y\colon X\rightarrow Y$ is dominant and $2^d c_{d}(G)$ is the generic cardinality of fibres of $f_{G,d}|_X^Y$, i.e,
  \[2^d c_{d}(G) = \#\Big( f_{G,d}^{-1}(\lambda) \cap X \Big) \text{ for } \lambda\in Y \text{ generic}. \]
\end{lemma}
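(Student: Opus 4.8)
The plan is to compare $f_{G,d}|_X^Y$ with the more standard pinned restriction described in \cite[Lemma 3.3]{dewar2024number}, and transfer the fibre count across an affine change of coordinates on the target space of realisations. First I would set up the standard pinning: let $X_0 \subset \CC^{n\cdot d}$ be the subspace cutting out realisations with $p_1$ fixed at the origin, $p_2$ on the first coordinate axis, ..., $p_d$ on the span of the first $d-1$ coordinate vectors. By \cite[Lemma 3.3]{dewar2024number} (together with \Cref{prop:minrigidequiv} for dominance), the restricted map $f_{G,d}|_{X_0}^{Y}$ is dominant with generic fibre cardinality $2^d c_d(G)$; the factor $2^d$ records the residual group of congruences fixing $X_0$ setwise, which is generated by the $d$ coordinate reflections. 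So it suffices to produce a bijection between generic fibres of $f_{G,d}|_{X_0}^Y$ and generic fibres of $f_{G,d}|_X^Y$.

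Next I would build the linear change of coordinates on $\CC^{n\cdot d}$ that carries $X_0$ to $X$. Write $B = [b_1 \cdots b_d] \in \RR^{d\times d}$ for the matrix with rows $b_1,\dots,b_d$; it is invertible since the $b_l$ form a basis. Consider the map $\Phi\colon \CC^{n\cdot d}\to\CC^{n\cdot d}$ acting on each vertex coordinate by $p_i \mapsto B^{-1}p_i + B^{-1}c$ where $c = (c_1,\dots,c_d) = B(1,\dots,1)$, so that $B^{-1}c = (1,\dots,1)$; equivalently $\Phi(p)_i = B^{-1}p_i + (1,\dots,1)$. The condition $p_i\cdot b_l = c_l$ is $(Bp_i)_l = c_l$, which under $q = \Phi(p)$, i.e. $p_i = B(q_i - (1,\dots,1)) $... — more cleanly, I would instead define $X$ directly as the image $\Psi(X_0)$ for an explicit affine isomorphism $\Psi$ and then check the defining equations match. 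The key point is that $\Psi$ is of the form $p_i \mapsto A p_i + v$ for a fixed invertible $A\in\GL(d,\CC)$... but $A$ need not be orthogonal, so $f_{G,d}\circ\Psi \neq f_{G,d}$ in general. This is the main obstacle: an affine reparametrisation of positions does not preserve squared edge lengths unless it is a similarity.

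To get around this, I would work on the target side instead. The correct statement is that $f_{G,d}$ intertwines an affine map on $\CC^{n\cdot d}$ of the form ``apply $M\in\GL(d,\CC)$ to every vertex then translate'' with a \emph{linear} map on $\CC^{|E|}$ only when $M$ is a scalar multiple of an orthogonal matrix. So rather than relating $X$ to $X_0$ by a positional change of coordinates, I would argue abstractly: $X$ is an affine subspace of $\CC^{n\cdot d}$ of the same dimension as $X_0$, chosen so that it is a transversal to the $O(d,\CC)\ltimes\CC^d$-orbits in exactly the same way $X_0$ is — i.e. it contains exactly one representative of each generic congruence class, up to the finite residual reflection group. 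Concretely: (i) show $\dim X = n d - \binom{d+1}{2}$, which matches $\dim Y$ by \Cref{lem:maxwelllike} and \Cref{prop:minrigidequiv}, so a dimension count makes dominance of $f_{G,d}|_X^Y$ plausible; (ii) show that for generic $p\in\CC^{n\cdot d}$ with vertices affinely spanning $\CC^d$, the congruence class of $p$ meets $X$ in exactly $2^d$ points — this is where the hypothesis that $b_1,\dots,b_d$ is a \emph{basis} and that $c_l$ is the prescribed value $B(1,\dots,1)_l$ is used, since it guarantees the linear system positioning $p_1,\dots,p_d$ onto the nested flags $\{p_i\cdot b_l = c_l\}$ has the right solution count after quotienting by translations and rotations, with the $2^d$ coming from reflections fixing the flag structure; (iii) combine (ii) with \Cref{prop:realisationnum} and \Cref{prop:rigidequiv}(3): a generic $\lambda\in Y$ corresponds to a generic congruence class, $f_{G,d}^{-1}(\lambda)/\!\sim$ has $c_d(G)$ elements, and each contributes exactly $2^d$ points to $f_{G,d}^{-1}(\lambda)\cap X$, giving $\# (f_{G,d}^{-1}(\lambda)\cap X) = 2^d c_d(G)$; dominance of $f_{G,d}|_X^Y$ then follows since the fibres over a dense subset of $Y$ are nonempty. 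I expect step (ii) — verifying that the edited, affinely-translated pinning still picks out precisely $2^d$ representatives per congruence class — to be the technical crux, and I would handle it by reducing to the concrete standard pinning via an explicit congruence-equivariant comparison or by a direct rank computation on the linear conditions defining $X$.
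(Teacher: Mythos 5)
Your set-up (reduce to the standard pinning of \cite[Lemma 3.3]{dewar2024number} and transfer the generic fibre count) is the right starting point, but the proposal stalls exactly where the actual content of the lemma lies. You correctly observe that the naive change of coordinates $p_i \mapsto B^{-1}p_i + \text{(translation)}$ uses a non-orthogonal matrix and therefore does not intertwine with $f_{G,d}$ — but you then treat this as a dead end and retreat to an abstract plan whose crucial step (your (ii): each generic congruence class meets $X$ in exactly $2^d$ points) is left as ``the technical crux'' to be handled ``by an explicit congruence-equivariant comparison or by a direct rank computation.'' That step is essentially equivalent in difficulty to the lemma itself, so as written the proposal is not a proof: (i) is only a dimension-count plausibility argument for dominance, and (iii) is conditional on (ii).

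The missing idea is that you do not need $B^{-1}$; you only need an \emph{isometry} carrying the standard flag to the flag cut out by $X$. The conditions $p_i\cdot b_l = c_l$ for $l\in[d+1-i]$ say that $p_i$ lies in the affine subspace $\bigcap_{l\le d+1-i}\{x\in\CC^d \mid x\cdot b_l = c_l\}$, i.e.\ the first $d$ vertices are constrained to a nested flag of affine subspaces whose linear parts are the orthogonal complements of $\mathrm{span}(b_1,\dots,b_k)$ for $k=d,d-1,\dots,1$. Because $b_1,\dots,b_d$ is a \emph{real} basis, applying Gram--Schmidt produces an orthonormal basis with the same partial spans, so there is a real isometry $x\mapsto Ax+b$ with $A\in O(d,\RR)\subset O(d,\CC)$ mapping the standard flag of \cite[Equation (1)]{dewar2024number} onto this flag (the translation absorbs the offsets $c_l$, noting $[b_1\cdots b_d]^{-1}(c_1,\dots,c_d)=(1,\dots,1)$). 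Acting diagonally on $\CC^{n\cdot d}$, this map sends $X_0$ bijectively onto $X$ and, being an isometry of $\CC^d$ for the complex bilinear dot product, satisfies $f_{G,d}(Ap+b)=f_{G,d}(p)$; hence it identifies the fibres of $f_{G,d}|_{X_0}^Y$ with those of $f_{G,d}|_X^Y$, giving both dominance and the count $2^d c_d(G)$ at once. This is the route the paper takes, and without it (or a genuine proof of your step (ii)) your argument has a gap.
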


\begin{proof}
  Note that a realisation $p\in\CC^{n \cdot d}$ lies in $X$ if and only if it's first $d$ vertices $p_1,\dots,p_d$ lie in a flag of affine spaces in $\CC^d$, namely:
  \begin{itemize}
  \item $p_1\in [b_1\cdots b_d]^{-1}(c_1,\dots,c_d)=\{(1,\dots,1)\}$,
  \item $p_2\in [b_1\cdots b_{d-1}]^{-1}(c_1,\dots,c_{d-1}),$
  \item $p_3\in [b_1\cdots b_{d-2}]^{-1}(c_1,\dots,c_{d-2}),$
  \item etc.
  \end{itemize}
  In \cite[Lemma 3.3]{dewar2024number} the statement is proved for one particular choice of flag (given in \cite[Equation (1)]{dewar2024number}).  Using the Gram-Schmidt process on $b_1,\ldots,b_d$, we can construct an isometry of $\RR^d$ (which is a map $x \mapsto Ax + b$ for some $A \in O(d,\mathbb{R}) \subset O(d,\mathbb{C})$ and $b \in \mathbb{R}^d \subset \mathbb{C}^d$) that maps said flag to ours.
  Since any such map is an isometry of the bilinear space of $\mathbb{C}^d$ equipped with the dot product,
  it follows that the cardinalities of the fibres remain the same.
\end{proof}

When $G$ is minimally $d$-rigid,
we may combine \Cref{prop:minrigidequiv} and \Cref{lem:realisationNumberAsGenericFibreCardinality} to replace the algebraic set $Y$ with the linear space $\mathbb{C}^{|E(G)|}$.
This greatly simplifies the computational techniques that are required.

\subsection{Matroidal preliminaries}\label{sec:matroidalPreliminaries}

We briefly recall the necessary preliminaries on matroids; for further details, see \cite{Oxley:2011}.

Let $M = (E, r)$ be a matroid on ground set $E$ with rank function $r \colon 2^E \to \ZZ_{\geq 0}$.
A set $X \subseteq E$ is \emph{independent} if $r(X) = |X|$, and \emph{dependent} otherwise.
The \emph{bases} of $M$ are the maximal independent sets and the \emph{circuits} of $M$ are the minimal dependent sets.
A \emph{flat} $F \subseteq E$ of $M$ is an inclusion-maximal subset of $E$ of a fixed rank, i.e. $r(F \cup \{e\}) = r(F) + 1$ for all $e \in E - F$.
Flats can also be defined as the closed sets of $M$ under the closure operator
\[
\cl\colon 2^E \rightarrow 2^E \, , \quad \cl(A) = \{e \in E \mid r(A \cup e) = r(A) \} \, .
\]
The main family of matroids we will be concerned with will be those arising from graphs.


\begin{example}[Graphic matroids]\label{ex:graphic+matroid}
    Let $G = (V,E)$ be a graph.
    Its \emph{graphic matroid} $M_G$ is the matroid on the ground set $E$ where the rank $r(F)$ of $F \subseteq E$ is the size of a spanning forest of $G[F]$, the subgraph of $G$ with edge set $F$ and vertex set $\{v \in V(G) \mid  \exists \: vw \in F \}$.
    The flats of $M_G$ correspond to vertex-disjoint unions of the vertex-induced subgraphs of $G$. 
    The circuits of $M_G$ are the cycles of $G$.
    If $G$ is connected, the bases of $M_G$ are the spanning trees of $G$.
\end{example}

A \emph{loop} is an element of a matroid contained in no basis.
The flats of a \emph{loopless} matroid (i.e. containing no loop) form a lattice ordered by inclusion with $\emptyset$ as the minimal flat and $E$ as the maximal flat.
A \emph{chain of flats} $\cF = (F_1, \dots, F_s)$ is a chain in this lattice, i.e. $F_i \subsetneq F_j$ for all $i < j$.
We call a chain \emph{proper} if it does not include $\emptyset$ or $E$, as any proper chain can always be extended to include these elements.
We call a proper chain \emph{maximal} if for any flat $F'$ such that $F_i \subseteq F' \subseteq F_{i+1}$, we have either $F' = F_i$ or $F' = F_{i+1}$.
It follows that maximal chains have exactly $r(M) - 1$ parts with $r(F_i) = i$.
Given two chains of flats $\cF = (F_1, \dots, F_s)$ and $\cH = (H_1, \dots, H_t)$, we say $\cF$ \emph{refines} $\cH$ if every $H_i$ appears in $\cF$.
We denote the set of proper chains of flats of a matroid $M$ by $\Delta(M)$\footnote{This is sometimes also known as the \emph{order complex} of $M$, specifically the order complex of the lattice of flats minus $\emptyset$ and $E$.}.

We can encode the matroid $M$ via a polyhedral fan whose geometry reflects the combinatorics of $M$.
To each proper chain of flats $\cF = (F_1, \dots, F_s) \in \Delta(M)$, we associate a polyhedral cone $\sigma_\cF \subseteq \RR^E$.
Write $\chi_e \in \RR^E$ for the characteristic vector of $e \in E$, and for each $S \subseteq E$ define $\chi_S := \sum_{e \in S} \chi_e \in \RR^E$.
We write $\cone(x_1,\ldots,x_n)$ for the set of sums $\sum_{i=1}^n a_i x_i$ with $a_i \geq 0$ for each $i \in [n]$, and define
\begin{equation} \label{eq:bergman+cone}
\sigma_{\cF} = \cone(\chi_{F_1}, \dots, \chi_{F_s}) + \RR \cdot \chi_E \, .
\end{equation}
As $\{\chi_{F_1}, \dots, \chi_{F_s}, \chi_E\}$ are linearly independent, it follows that $\sigma_{\cF}$ is a simplicial cone of dimension $s+1$.
It is also straightforward to check that $\sigma_\cH \subseteq \sigma_\cF$ if and only if $\cF$ refines $\cH$.
The \emph{Bergman fan} $\Trop(M)$ associated to the loopless matroid $M$ is the union of the cones $\sigma_{\cF}$ for all proper chains of flats $\cF \in \Delta(M)$\footnote{In other literature, the Bergman fan is defined as a fan with the same support but equipped with the coarser \textit{matroid fan structure}; see \cite{ArdilaKlivans:2006}. Our default fan structure on the Bergman fan, with cones $\sigma_{\mathcal F}$ as above, is sometimes called the \textit{fine structure} or \textit{fine subdivision}.}.
The previous properties of $\sigma_\cF$ imply it is a pure simplicial polyhedral fan of dimension $r(E)$.
The definition of Bergman fan above is obtained by unpacking \cite[Theorem~1]{ArdilaKlivans:2006}.
For further properties of this fan, see \cite{ArdilaKlivans:2006, Feichtner2005}.

\begin{example}\label{ex:bergman}
    The \emph{uniform matroid} $U_{n,s}$ is the matroid on ground set $[n]$ whose bases are all subsets of $[n]$ of cardinality $s$.
    The flats of $U_{n, s}$ are $[n]$ and all sets of cardinality less than $s$, hence the maximal chains of flats are of the form
    \[
    \cF_{\mu} \colon \{\mu(1)\} \subsetneq \{\mu(1), \mu(2)\} \subsetneq \cdots \subsetneq \{\mu(1),\dots,\mu(s-1)\} \, 
    \]
    where $\mu \in \rm{Sym}(n)$ is a permutation.
    As such, the Bergman fan $\Trop(U_{n,s})$ is the union of the maximal cones $\sigma_{\cF(\mu)}$ for all $\mu \in \rm{Sym}(n)$, where
\begin{align}
\sigma_{\cF(\mu)} &= \cone(\chi_{\{\mu(1)\}}, \chi_{\{\mu(1), \mu(2)\}}, \dots, \chi_{\{\mu(1),\dots,\mu(s-1)\}}) + \RR\cdot \chi_{[n]}  \nonumber\\
&= \{w \in \RR^n \mid w_{\mu(1)} \geq w_{\mu(2)} \geq \cdots \ge w_{\mu(s-1)} \geq w_{\mu(s)} = \cdots = w_{\mu(n)}\} \, . \label{eq:bergman+cone+ex}
\end{align}
\end{example}

\subsection{Tropical preliminaries}

We will briefly recall the notions of tropical varieties, stable intersections, and how generic root counts can be expressed as tropical intersection numbers.  Our notation closely follows \cite{MaclaganSturmfels15}, except that our tropical varieties will be \emph{weighted polyhedral complexes} -- polyhedral complexes with positive integer multiplicities attached to the maximal cells.
If we wish to solely consider the set of points in the tropical variety with no additional polyhedral structure, we will refer to the \emph{support} of the tropical variety.
For the sake of simplicity, we will focus on the classes of tropical varieties that are of immediate interest to us, namely tropical hypersurfaces and tropical linear spaces.
For a more rigorous treatment of general tropical varieties, see \cite{MaclaganSturmfels15}.

Throughout, we let $K$ be an algebraically closed field with valuation map $\val \colon K \rightarrow \RR \cup \{\infty\}$.
We say the valuation is \emph{trivial} if it only takes values $0$ and $\infty$, and non-trivial otherwise.

\begin{example}
    Let $\CC\{\!\{t\}\!\}$ denote the field of Puiseux series
    \[
    \CC\{\!\{t\}\!\} = \left\{ \sum_{k=k_0}^\infty c_kt^{\frac{k}{n}} \; \Big| \; n \in \NN \, , \, k_0 \in \ZZ \, , \, c_k \in \CC \right\} \,\]
    with valuation
    \[\quad \val\left(\sum_{k=k_0}^\infty c_kt^{\frac{k}{n}}\right) = \frac{k_0}{n} \, .
    \]
    The field $\CC\{\!\{t\}\!\}$ is algebraically closed and $\val$ is a non-trivial valuation map.
    Note that $\CC$ is a subfield of $\CC\{\!\{t\}\!\}$ on which the valuation is trivial.
    Throughout the paper we exclusively work with the cases when $K = \CC$ or $K=\CC\{\!\{t\}\!\}$.
\end{example}

We write $K[x^\pm] := K[x_1^\pm, \dots, x_n^\pm]$ for the ring of Laurent polynomials in $n$ variables and coefficients in $K$.
Recall that the zero locus of a Laurent polynomial is contained in the algebraic torus $(K^\times)^n$.

We next define \emph{tropical hypersurfaces}.
\begin{definition}\label{def:tropical+hypersurface}
Let $f=\sum_{\alpha\in S}c_\alpha x^\alpha\in K[x^\pm]$ be a Laurent polynomial with finite support $S\subseteq\ZZ^n$.
For each subset $s \subseteq S$, we define a closed polyhedron
\[
\sigma_s(f)\coloneqq \Big\{ w\in\RR^n\mid \min_{\alpha\in S}(\val(c_\alpha)+\alpha\cdot w) \text{ is attained at exactly $\beta$ for each } \beta \in s\Big\} \subseteq \RR^n \, .
\]
The (unweighted) \emph{tropical hypersurface} $\Trop(f)$ is the polyhedral complex
\[
\Trop(f)\phantom{:}=\Big\{ \sigma_s(f)\mid s \subseteq S\text{ with }|s|>1 \, , \, \sigma_s(f) \neq \emptyset\Big\} \, .
\]
\end{definition}

We can obtain an elegant polyhedral description of $\Trop(f)$ via subdivisions of the Newton polytope.
Recall that the \emph{Newton polytope} of $f$ is $\Newt(f) := \conv(\alpha \in \ZZ^n \mid \alpha \in S) \subseteq \RR^n$.
The \emph{Newton subdivision} $\cN(f)$ of $\Newt(f)$ is the regular subdivision on $S$ induced by $\val(c_\alpha)$; see \cite[Definition 2.3.8]{MaclaganSturmfels15} for a formal definition of regular subdivision.
Informally, $\cN(f)$ is constructed by lifting the points of $S$ to the heights $\{\val(c_\alpha)\}_{\alpha \in S}$ in $\RR^{n+1}$, taking the convex hull and then projecting 
the subcomplex of faces that can be seen when viewed from below back to $\RR^n$.
Each cell in $\cN(f)$ is uniquely determined by the elements of $S$ it contains, and so we write $\tau_s(f)$ for the cell satisfying $\tau_s(f) \cap S = s$.

In \cite[Proposition 3.1.6]{MaclaganSturmfels15}, it is stated that $\Trop(f)$ is dual to $\cN(f)$ in the following way.
There is a one-to-one correspondence between the cells $\sigma_s(f)$ and the positive dimensional cells $\tau_s(f)$ of $\cN(f)$.
Moreover, this correspondence is inclusion reversing, and satisfies $\dim(\sigma_s(f)) = n - \dim(\tau_s(f))$.
As $\dim(\tau_s(f)) = 0$ if and only if $|s| = 1$, it follows that $\Trop(f)$ is a \emph{pure} polyhedral complex of dimension $n-1$ -- i.e., every maximal cell has the same dimension (which in our case is $n-1$).

We can use this correspondence between $\Trop(f)$ and $\cN(f)$ to add one further piece of information to $\Trop(f)$, namely \emph{multiplicities} to the maximal cells.
If $\sigma_s(f)$ is $(n-1)$-dimensional, the cell $\tau_s(f) \in \cN(f)$ is 1-dimensional.
Here we define $\ell(s) := |\tau_s(f) \cap \ZZ^n| - 1$ to be the lattice length of $\tau_s(f)$.
Since every 0-dimensional cell of $\cN(f)$ is contained in $\mathbb{Z}^n$,
we have $\ell(s) \geq 1$.
We define the (weighted) \emph{tropical hypersurface} to be $\Trop(f)$ with multiplicity $\ell(s)$ attached to maximal cell $\sigma_s(f)$.
From here on, we shall always assume our tropical hypersurfaces are weighted unless stated otherwise.

  \begin{example}
  \label{ex:tropicalHypersurface}
  \cref{fig:tropicalHypersurface} shows the Newton subdivision $\cN(f)$ and the tropical hypersurface $\Trop(f)$ when $f\coloneqq 1+x+y+t\cdot xy\in \CC\{\!\{t\}\!\}[x^\pm,y^\pm]$. The figure highlights two cells in $\cN(f)$ and their corresponding polyhedra $\Trop(f)$.  Note how minimal cells (of cardinality greater than one) in $\cN(f)$ correspond to maximal cells of $\Trop(f)$.
  Each of the maximal cells of $\Trop(f)$ has multiplicity one.
\end{example}

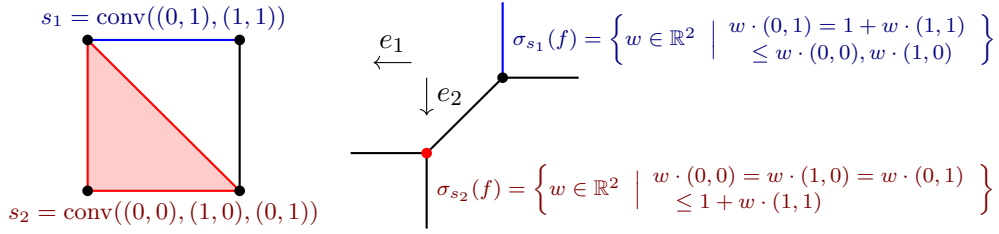
\begin{figure}[t]
  \centering
  \begin{tikzpicture}
    \node[left]
    {
      \begin{tikzpicture}[x={(20mm,0mm)},y={(0mm,20mm)}]
        \draw[thick] (1,1) -- (1,0);
        \draw[thick,blue] (1,1) -- (0,1);
        \fill[red!20,thick,draw=red] (0,0) -- (1,0) -- (0,1) -- cycle;
        \fill (0,0) circle (2pt);
        \fill (1,0) circle (2pt);
        \fill (0,1) circle (2pt);
        \fill (1,1) circle (2pt);
        \node[above,blue!50!black,font=\footnotesize] at (0.5,1) {$s_1=\conv((0,1),(1,1))$};
        \node[below,red!50!black,font=\footnotesize] at (0.5,0) {$s_2=\conv((0,0),(1,0),(0,1))$};
      \end{tikzpicture}
    };
    \node[right]
    {
      \begin{tikzpicture}
        \draw[thick]
        (0,0) -- ++(-1,0)
        (0,0) -- ++(0,-1)
        (0,0) -- (1,1)
        (1,1) -- ++(1,0);
        \draw[thick,blue]
        (1,1) -- ++(0,1);
        \fill[red] (0,0) circle (2pt);
        \fill (1,1) circle (2pt);
        \node[anchor=north west,red!50!black,font=\scriptsize] at (0,0)
        { $\sigma_{s_2}(f)=\left\{w\in\RR^2 \bigmid\hspace{-2mm}
            \begin{array}{l}
              w\cdot (0,0) = w\cdot (1,0) = w\cdot (0,1)\\ \hspace{3mm}\leq 1+w\cdot (1,1)
            \end{array} \right\}$ };
        \node[anchor=west,blue!50!black,font=\scriptsize] at (1,1.5)
        { $\sigma_{s_1}(f)=\left\{w\in\RR^2 \bigmid\hspace{-2mm}
            \begin{array}{l}
              w\cdot (0,1) = 1+w\cdot (1,1) \\ \hspace{3mm}\leq w\cdot (0,0),w\cdot (1,0)
            \end{array} \right\}$ };
        \draw [->] (-0.2,1.2) -- node[above] {$e_1$} ++(-0.5,0);
        \draw [->] (0,1) -- node[right] {$e_2$} ++(0,-0.5);
      \end{tikzpicture}
    };
  \end{tikzpicture}\vspace{-3mm}
  \caption{$\cN(f)$ and $\Trop(f)$ for the Laurent polynomial $f$ given in \Cref{ex:tropicalHypersurface}.
  }
  \label{fig:tropicalHypersurface}
\end{figure}

We are now in a position to define tropical varieties.

\begin{definition} \label{def:tropical+variety}
  Let $I\subseteq K[x^\pm]=K[x_1^\pm,\dots,x_n^\pm]$ be a Laurent polynomial ideal.
  The \emph{tropical variety} $\Trop(I)$ is the weighted polyhedral complex in $\RR^n$
  \[
  \Trop(I) = \bigcap_{f \in I} \Trop(f) \, .
  \]
  The recipe for its multiplicities is given in~\cite[Definition 3.4.3]{MaclaganSturmfels15}.
\end{definition}

\begin{remark}
    A priori, Definition~\ref{def:tropical+variety} requires taking an infinite intersection.
    However, one can always construct $\Trop(I)$ as a finite intersection of tropical hypersurfaces~\cite[Theorem 2.6.6]{MaclaganSturmfels15}.
    The tropical variety $\Trop(I)$ has a number of equivalent descriptions \cite[Fundamental Theorem 3.2.3]{MaclaganSturmfels15}.
    When $I$ is prime, $\Trop(I)$ has additional structural properties: it is pure, balanced, and connected in codimension one~\cite[Structure Theorem 3.3.5]{MaclaganSturmfels15}.
    For us, it is often important that our polyhedral complexes are balanced (\Cref{def:balanced}) as this guarantees that intersection numbers are translation invariant; see \Cref{lem:tropicalIntersectionProductTranslationInvariant}.
\end{remark}

Outside of our special cases of interest, it is sufficient for us to know that one can place multiplicities on the maximal cells.
In the case that $I = \langle f \rangle$ is a principal ideal, its tropical variety $\Trop(\langle f \rangle)$ is precisely the tropical hypersurface $\Trop(f)$.
Moreover, its multiplicities agree with those obtained from taking the lattice lengths of the 1-dimensional cells in the Newton subdivision $\cN(f)$. 

The other class of tropical varieties of interest to us are tropical linear spaces.
  For a linear ideal $I\subseteq K[x_1^\pm, \dots, x_n^\pm]$ (i.e., $I$ is generated by linear polynomials), we associate a matroid $M(I)$ to $I$ on ground set $[n]$ in the following way.
  We say a subset $S \subseteq [n]$ is dependent in $M(I)$ if there exists some linear polynomial $\ell = \sum_{i=1}^n c_i x_i \in I$ such that $S$ is exactly the set $\Supp(\ell) :=\{i \in [n] \mid c_i \neq 0 \}$~\cite[Lemma 4.1.4]{MaclaganSturmfels15}.
  The minimal dependent sets $\cC(I)$ are the circuits of $M(I)$, and for each $C \in \cC(I)$ there is a unique linear polynomial up to scaling $\ell_C$ such that $\Supp(\ell_C) = C$.
  Moreover, this set of linear polynomials completely determines $\Trop(I)$.

\begin{lemma}[see {\cite[Lemma 4.3.16]{MaclaganSturmfels15}}]
    Let $I \subseteq K[x^\pm]$ be a linear ideal.
    Write $\ell_C = \sum_{i \in C} \ell_{C,i} x_i$ for the linear polynomial corresponding to $C \in \cC(I)$.
    Then
      \[
      \Trop(I) = \bigcap_{C \in \cC(I)} \Trop(\ell_C) \, ,
      \]
    where each maximal cone has multiplicity one.
    In particular, $w \in \Trop(I)$ if and only if $\min_{i \in C}(w_i + \val(\ell_{C,i}))$ is attained at least twice for all $C \in \cC(I)$.
\end{lemma}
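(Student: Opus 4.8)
The plan is to establish the set-theoretic identity $\Trop(I)=\bigcap_{C\in\cC(I)}\Trop(\ell_C)$ and the multiplicity-one claim separately, with essentially all of the work going into one inclusion. The inclusion $\Trop(I)\subseteq\bigcap_{C}\Trop(\ell_C)$ is immediate: for each circuit $C\in\cC(I)$ a nonzero scalar multiple of $\ell_C$ lies in $I$ by the definition of the circuits of $M(I)$, and rescaling a Laurent polynomial by a unit of $K$ leaves its tropical hypersurface unchanged, so $\Trop(I)=\bigcap_{f\in I}\Trop(f)\subseteq\Trop(\ell_C)$ for every $C$, and intersecting over all $C$ gives the claim.

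For the reverse inclusion I would use the Fundamental Theorem \cite[Fundamental Theorem 3.2.3]{MaclaganSturmfels15} in the form ``$w\in\Trop(I)$ if and only if $\initial_w(I)$ contains no monomial''. Since $I$ is a linear ideal, $\initial_w(I)$ is again generated by linear polynomials (the reduced $w$-Gr\"obner basis of $I$ is linear), from which one deduces that $\initial_w(I)$ contains a monomial if and only if $I$ itself contains a \emph{linear} polynomial $g$ whose initial form $\initial_w(g)$ is a single term. So, fixing $w\in\bigcap_{C}\Trop(\ell_C)$, it suffices to rule out the existence of such a $g$. Assume for contradiction that some linear $g=\sum_{i\in T}c_ix_i\in I$ has $\initial_w(g)=c_jx_j$, where $T:=\Supp(g)\ni j$; among all such $g$ (over all admissible $j$) choose one with $|T|$ minimal, and set $m:=\val(c_j)+w_j$, so that $\val(c_i)+w_i>m$ for every $i\in T\setminus\{j\}$. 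Because $\Supp(g)=T$ witnesses $T$ as dependent in $M(I)$, $T$ contains a circuit $C$. If $T=\{j\}$, then $\{j\}\in\cC(I)$, $\ell_{\{j\}}$ is a scalar multiple of $x_j$, and $\Trop(\ell_{\{j\}})=\emptyset$, contradicting $w\in\Trop(\ell_{\{j\}})$. Otherwise, since $w\in\Trop(\ell_C)$ the minimum $m_C:=\min_{i\in C}(\val(\ell_{C,i})+w_i)$ is attained at two or more indices of $C$, so we may choose $k\in C$ with $k\neq j$ attaining it; then $g':=g-(c_k/\ell_{C,k})\,\ell_C\in I$ is linear, satisfies $\Supp(g')\subseteq T\setminus\{k\}$, and a short valuation estimate — treating the cases $j\in C$ and $j\notin C$ separately — shows that the coefficient of $x_j$ in $g'$ still has $w$-weight exactly $m$ while every other coefficient has $w$-weight strictly greater than $m$, so $\initial_w(g')=c_jx_j$, contradicting the minimality of $|T|$. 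Hence no such $g$ exists, $\initial_w(I)$ is monomial-free, and $w\in\Trop(I)$.

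The step I expect to be the main obstacle is exactly this valuation bookkeeping for $g'$: one must check both that subtracting $(c_k/\ell_{C,k})\ell_C$ does not lower the $w$-weight of the coefficient of $x_j$ below $m$ (where the dichotomy $j\in C$ versus $j\notin C$ enters) and that it does not create a second coefficient of $w$-weight exactly $m$ (which is precisely why $k$ is taken to attain the minimum $m_C$, forcing the modified coefficients supported on $C$ to have $w$-weight $>m$). A subsidiary point to pin down is the reduction ``$I$ linear $\Rightarrow\initial_w(I)$ generated in degree at most one, so its monomials come from linear elements of $I$'', which I would either cite as a standard property of Gr\"obner bases of linear ideals or deduce from the fact that every $w$-Gr\"obner degeneration of a linear space is again a linear space.

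For the multiplicity-one claim I would argue that, for $w$ in the relative interior of a maximal cone of $\Trop(I)$, the initial ideal $\initial_w(I)$ is prime: a reduced Gr\"obner basis of $I$ has the form $\{\,x_i-\sum_{j\in B}a_{ij}x_j : i\notin B\,\}$ for some basis $B$ of $M(I)$, and for generic such $w$ each $\initial_w$ is a binomial $x_i-c\,x_{j(i)}$, so $\initial_w(I)$ is a binomial prime ideal, and the lattice index appearing in \cite[Definition 3.4.3]{MaclaganSturmfels15} equals $1$ because the exponent differences $e_i-e_{j(i)}$ span a unimodular sublattice of $\ZZ^n$; hence every maximal cone of $\Trop(I)$ carries multiplicity $1$. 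Finally, the ``in particular'' clause is immediate from \Cref{def:tropical+hypersurface}: $w\in\bigcap_{C}\Trop(\ell_C)$ unravels to the condition that $\min_{i\in C}(w_i+\val(\ell_{C,i}))$ is attained at least twice for every $C\in\cC(I)$.
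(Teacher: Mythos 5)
The paper gives no proof of this lemma at all: it is quoted verbatim from \cite[Lemma 4.3.16]{MaclaganSturmfels15} and used as a black box, so there is nothing internal to compare against. Your argument is essentially a correct reconstruction of the standard textbook proof that the circuit forms constitute a tropical basis of a linear ideal: the easy inclusion from $\ell_C\in I$, and for the converse a support-minimal linear $g\in I$ with single-term initial form, reduced against a circuit $\ell_C$ with $C\subseteq\Supp(g)$ using an eliminator $k\neq j$ attaining the circuit minimum. I checked the valuation bookkeeping you singled out and it works: with $m=\val(c_j)+w_j$ and $m_C=\min_{i\in C}(\val(\ell_{C,i})+w_i)$ one gets $\val\bigl((c_k/\ell_{C,k})\ell_{C,i}\bigr)+w_i>m-m_C+m_C=m$ for every $i\in C$, so the $x_j$-coefficient of $g'$ keeps weight exactly $m$ (whether or not $j\in C$) and no other coefficient drops to weight $m$, while $k$ leaves the support; together with the degenerate case $T=\{j\}$ (where $\Trop(\ell_{\{j\}})=\emptyset$) this closes the induction.

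Two soft spots, neither fatal. First, the reduction ``$\initial_w(I)$ contains a monomial $\Rightarrow$ some linear $g\in I$ has single-term initial form'' does rest on the standard fact that initial degenerations of linear ideals are again linear (tropical Gaussian elimination / a Gr\"obner basis of linear forms); citing it, as you propose, is fine, but be aware this is where the remaining genuine content sits. Second, your multiplicity argument is phrased too loosely: for an arbitrary basis $B$ of $M(I)$ the initial forms of the polynomials $x_i-\sum_{j\in B}a_{ij}x_j$ need not be binomials for $w$ in a maximal cell (a fundamental circuit can have its minimum attained more than twice there), so $B$ must be chosen adapted to $w$ — equivalently one uses that on a maximal cell the initial matroid splits into rank-one pieces — and \cite[Definition 3.4.3]{MaclaganSturmfels15} computes the cell multiplicity via minimal primes of $\initial_w(I)$, not via a lattice index. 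A cleaner route to the same conclusion: $\initial_w(I)$ is a monomial-free linear ideal in the Laurent ring over the residue field, hence prime, hence has a unique minimal prime of multiplicity one, giving multiplicity one on every maximal cell.
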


When $I \subseteq K[x^\pm]$ is a linear ideal,
we say that $\Trop (I)$ is a (realisable) \emph{tropical linear space}.

\begin{example}  \label{ex:tropicalLinearSpace}
  Consider the linear ideal
  \begin{equation*}
    I\coloneqq\Big\langle
    x_1+x_3+x_4,\;\;
    x_2+x_3+(1+t)x_4
    \Big\rangle\subseteq\CC\{\!\{t\}\!\}[x^\pm],
  \end{equation*}
Its collection of support-minimal linear polynomials is
\begin{align*}
\ell_{123} & = (1+t)x_1 - x_2 + tx_3 & \ell_{124} &= -x_1 + x_2 + tx_4 \\
\ell_{134} &= x_1+x_3+x_4 & \ell_{234} &= x_2+x_3+(1+t)x_4 \, ,
\end{align*}
hence $\Trop(I)$ is the intersection of tropical hypersurfaces $\Trop(\ell_C)$ for $C \in \cC(I) = \binom{[4]}{3}$.
This implies that $M(I)$ is the rank-2 uniform matroid with 4 elements.
See \cref{fig:tropicalLinearSpace} for an illustration.
\end{example}

\begin{figure}[t]
  \centering
  \begin{tikzpicture}
    \node[right] (linearSpace)
    {
      \begin{tikzpicture}[x={(220:12mm)},y={(0:12mm)},z={(90:20mm)}]
        \coordinate (v1) at (0,0,-0.25);
        \coordinate (v2) at (0,0,0.25);
        \draw[thick]
        (v1) -- (v2);
        \draw[->,blue,thick]
        (v2) -- ++(0,0.75,0.75) node[anchor=south,font=\scriptsize] {$e_1$}
        node[midway,anchor=north west,font=\scriptsize] {$\left\{w\in\RR^4 \; \big| \; w_1 \geq w_2 = w_3 + 1 = w_4 + 1  \right\}$};
        \draw[->,thick]
        (v2) -- ++(0,-0.75,0.75) node[anchor=south,font=\scriptsize] {$e_2$};
        \draw[->,thick]
        (v1) -- ++(0.75,0,-0.75) node[anchor=north,font=\scriptsize] {$e_3$};
        \draw[->,thick]
        (v1) -- ++(-0.75,0,-0.75) node[anchor=north,font=\scriptsize] {$e_4$};
        \fill
        (v1) circle (2pt)
        (v2) circle (2pt);
        \node[left,font=\scriptsize] at (v1) {$(0,0,0,0)$};
        \node[left,font=\scriptsize] at (v2) {$(1,1,0,0)$};
      \end{tikzpicture}
    };
  \end{tikzpicture}\vspace{-2mm}
  \caption{$\Trop(I)$ for $I$ in \cref{ex:tropicalLinearSpace}.
  We exploit the fact that $\Trop(I)$ is invariant under translation by $\RR\cdot (1,1,1,1)$ to produce pictures in $\RR^3$.}
  \label{fig:tropicalLinearSpace}
\end{figure}
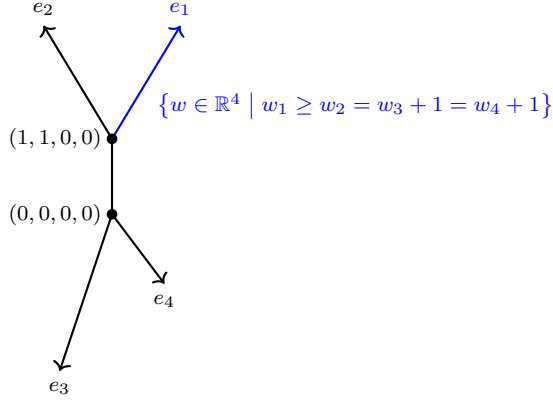

We will be particularly interested in the case where $I \subseteq K[x^\pm]$ is generated over a subfield $K' \subset K$ on which the valuation is trivial, e.g. $\CC \subset \CC\{\!\{t\}\!\}$.
In this case, $\val(\ell_{C,i}) \in \{0, \infty\}$ for all $C \in \cC(I)$, and hence the tropical linear space $\Trop(I)$ is completely determined from the underlying matroid $M(I)$.
The following lemma demonstrates that, in this case, the tropical linear space is precisely a Bergman fan up to a \emph{refinement} of the fan structure, i.e. every cone of the tropical linear space is a union of cones of the Bergman fan.

\begin{lemma}
  \label{lem:bergmanFan}
  Suppose $I\subseteq K[x^\pm]$ is a linear ideal generated over a subfield $K'\subseteq K$ on which the valuation $\val\colon K^\ast\rightarrow\RR$ is trivial. Then $\Trop(I)$ is equal to the Bergman fan $\Trop(M(I))$ up to a refinement of the fan structure.
\end{lemma}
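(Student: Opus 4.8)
The plan is to show the two weighted polyhedral complexes $\Trop(I)$ and $\Trop(M(I))$ have the same support, and then that their multiplicities agree, so that one is a fan refinement of the other. First I would reduce to the case that $M(I)$ is loopless: an element $i\in[n]$ is a loop of $M(I)$ exactly when $x_i\in I$, but then $\Trop(I)\subseteq(K^\times)^n$ has empty intersection with the coordinate hyperplane forced by $x_i=0$, so $\Trop(I)=\emptyset$; and a loopless matroid is exactly what is required for $\Trop(M(I))$ to be defined in the excerpt. Having disposed of that, the key input is the lemma quoted just before the statement (the circuit description, \cite[Lemma 4.3.16]{MaclaganSturmfels15}): since $I$ is generated over $K'$ with trivial valuation, we may scale each support-minimal linear polynomial $\ell_C$ so that all its coefficients lie in $K'$, whence $\val(\ell_{C,i})=0$ for every $i\in C$. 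Thus
\[
  \Trop(I)=\bigcap_{C\in\cC(I)}\bigl\{\,w\in\RR^n \bigmid \textstyle\min_{i\in C} w_i \text{ is attained at least twice}\,\bigr\},
\]
and the right-hand side depends only on the collection of circuits $\cC(I)$, i.e. only on the matroid $M(I)$.

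Next I would identify this common refinement-independent set with the support of the Bergman fan. One clean way is to invoke the Fundamental Theorem of tropical geometry over the trivially valued field $K'$: $\Trop(I)$ as computed above is the tropicalisation of the linear ideal $I_{K'}\subseteq K'[x^\pm]$ generated by the same polynomials, and for a trivially valued field this tropical variety is a fan whose support is exactly the Bergman fan of the associated matroid — this is precisely \cite[Theorem 1]{ArdilaKlivans:2006}, which the excerpt already cites as the source of its definition of $\Trop(M)$. Alternatively, and more self-containedly, I would check directly that a point $w$ satisfies ``$\min_{i\in C}w_i$ attained twice for all circuits $C$'' if and only if $w$ lies in some cone $\sigma_\cF$ of \eqref{eq:bergman+cone}: given such a $w$, let $F_1\subsetneq\cdots\subsetneq F_s$ be the (distinct, proper) sublevel sets $\{i : w_i\le t\}$ of $w$, verify each $F_j$ is a flat of $M(I)$ using the circuit condition (no circuit has a unique minimum on it relative to $F_j$), and write $w$ as a nonnegative combination of the $\chi_{F_j}$ plus a multiple of $\chi_{[n]}$; conversely every point of $\sigma_\cF$ has sublevel sets among the flats $F_j$ and hence meets every circuit in $\ge 2$ minimisers. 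This is the part that takes the most care, but it is the standard ``flats = sublevel sets'' dictionary.

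Finally I would address multiplicities. The cited lemma states every maximal cone of $\Trop(I)$ carries multiplicity one, and the Bergman fan $\Trop(M(I))$ with the fine structure \eqref{eq:bergman+cone} is likewise pure of dimension $r(E)$ with all multiplicities one (it is balanced with trivial weights — see \cite{ArdilaKlivans:2006}). Since the supports coincide and the Bergman-fan cones $\sigma_\cF$ subdivide the cones of $\Trop(I)$ — indeed each maximal cone of $\Trop(I)$, being $r(E)$-dimensional, is a union of maximal cones $\sigma_\cF$ — the weighted complex $\Trop(M(I))$ is a refinement of $\Trop(I)$, which is the assertion. The one genuine obstacle is the direct verification in the middle paragraph that the circuit inequalities cut out exactly $\bigcup_\cF\sigma_\cF$; if one prefers to avoid it, citing \cite[Theorem 1]{ArdilaKlivans:2006} discharges it immediately, at the cost of an external reference.
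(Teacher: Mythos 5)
Your overall strategy (show the supports agree, then argue refinement) is the same as the paper's, which disposes of both steps by citation: \cite[Proposition 4.4.4]{MaclaganSturmfels15} for equality of supports and \cite[Proposition 1]{ArdilaKlivans:2006} for the refinement of fan structures. Your support argument is fine if you take the citation route (circuit description over $K'$ plus Ardila--Klivans), but your ``self-contained'' alternative is stated backwards: with the min convention and cones $\sigma_{\cF}=\cone(\chi_{F_1},\dots,\chi_{F_s})+\RR\cdot\chi_E$, a point $w=\sum_j a_j\chi_{F_j}+\lambda\chi_E$ with $a_j\ge 0$ takes its \emph{largest} values on the smallest flats, so the flats in the chain are the superlevel sets $\{i: w_i\ge t\}$, not the sublevel sets. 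Concretely, for the graphic matroid of a triangle with circuit $\{1,2,3\}$ and $w=(0,0,1)=\chi_{\{3\}}$, the point $w$ lies in the Bergman fan and satisfies the circuit condition, yet its sublevel set $\{1,2\}$ is not a flat; as written, your verification would fail. This is a fixable orientation slip, but it needs to be corrected if you want the direct argument.

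The genuine gap is the refinement step. Two pure fans of the same dimension with the same support need not have one refining the other (two complete fans in $\RR^2$ differing by a rotation already show this), so the sentence ``each maximal cone of $\Trop(I)$, being $r(E)$-dimensional, is a union of maximal cones $\sigma_{\cF}$'' asserts exactly what has to be proved. What is needed is that the relative interior of each cone $\sigma_{\cF}$ is contained in a single cell of the polyhedral structure on $\Trop(I)$ --- equivalently, that the initial ideal $\initial_w(I)$ (or the initial matroid $M_w$) is constant as $w$ ranges over the relative interior of $\sigma_{\cF}$, so that the chain-of-flats subdivision refines the one coming from $\Trop(I)$. This is precisely what the paper imports from \cite[Proposition 1]{ArdilaKlivans:2006}; you should either cite that result or supply the (short but nontrivial) argument that $\initial_w(I)$ depends only on the chain of flats determined by $w$. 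The rest of your write-up (the loopless reduction, the rescaling of the circuit polynomials into $K'$, and the observation that all maximal cones on both sides carry multiplicity one) is correct.
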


\begin{proof}
    By \cite[Proposition 4.4.4] {MaclaganSturmfels15}, the complexes $\Trop(I)$ and $\Trop(M(I))$ have the same support. By \cite[Proposition 1]{ArdilaKlivans:2006}, it follows that $\Trop(M(I))$ is a refinement of $\Trop(I)$.
\end{proof}

As the fan structure of $\Trop(M(I))$ is a refinement of the fan structure of $\Trop(I)$, its maximal cones inherit multiplicity one from the maximal cones of $\Trop(I)$.
Moreover, it remains balanced by \cite[Lemma 3.6.2]{MaclaganSturmfels15}.
As such, we will freely swap between $\Trop(I)$ and $\Trop(M(I))$ for computations\footnote{\cite{MaclaganSturmfels15} defines a tropical variety to be the support of a balanced polyhedral complex precisely because the choice of fan structure does not matter, it suffices to know there is one.}.

Next we introduce the notions of balanced complexes and stable intersection. Whenever we intersect tropical varieties, we will generally mean stable intersections.

\begin{definition}\label{def:balanced}
Given a polyhedron $\eta \subseteq \RR^n$, we define the lattice $N_\eta := \ZZ^n \cap \Span(\eta - u)$ for some arbitrary element $u \in \eta$. If $\Sigma$ is a weighted polyhedral complex, denote by $\mult_\Sigma(\sigma)$ the multiplicity of a maximal cone $\sigma$ of $\Sigma$.

Let $\Sigma$ be a $d$-dimensional weighted polyhedral complex in $\RR^n$ that is \emph{pure}, in that every maximal cell has the same dimension. Fix a $(d-1)$-dimensional cell $\tau$ and, for each $d$-dimensional cell $\sigma$ such that $\sigma\supsetneq\tau$, let $v_\sigma\in\ZZ^n$ be a vector such that
$\ZZ\cdot v_\sigma + N_\tau = N_\sigma$.
We say that $\Sigma$ is \emph{balanced at} $\tau$ if $\sum_{\sigma\supsetneq\tau}\mult_\Sigma(\sigma)v_\sigma\in N_\tau$, and that $\Sigma$ is a \emph{balanced polyhedral complex} if it is balanced at every $(d-1)$-dimensional cell.
\end{definition}

\begin{definition}
  \label{def:stableIntersection}
  Let $\Sigma_1,\Sigma_2$ be two balanced polyhedral complexes in $\RR^n$ where the multiplicity of the maximal cell $\tau \in \Sigma_i$ is denoted $\mult_{\Sigma_i}(\tau)$.
  Their \emph{stable intersection} is the polyhedral complex consisting of the polyhedra
  \[
    \Sigma_1\wedge\Sigma_2 \coloneqq \Big\{\sigma_1\cap\sigma_2\bigmid\sigma_1\in\Sigma_1, \sigma_2\in\Sigma_2 \, , \, \dim(\sigma_1+\sigma_2)=n \Big\} \, .
  \]
  The multiplicity of the top-dimensional $\sigma_1\cap\sigma_2\in\Sigma_1\wedge\Sigma_2$ is given by
  \[
    \mult_{\Sigma_1\wedge\Sigma_2}(\sigma_1\cap\sigma_2) \coloneqq \sum_{\tau_1,\tau_2}\mult_{\Sigma_1}(\tau_1)\mult_{\Sigma_2}(\tau_2)[ \mathbb{Z}^n : N_{\tau_1}+N_{\tau_2}],
  \]
  where the sum is over all maximal cells $\tau_i\in\Sigma_i$ with $\sigma_1 \cap \sigma_2 \subseteq \tau_i$ for $i=1,2$, and $\tau_1\cap (\tau_2+\varepsilon\cdot v)\neq\emptyset$ for a fixed generic $v\in\RR^n$ and $\varepsilon>0$ sufficiently small.
  The integer $[\mathbb{Z}^n : N_{\tau_1}+N_{\tau_2}]$ denotes the \emph{index} of the sublattice $N_{\tau_1} + N_{\tau_2} \subseteq \mathbb{Z}^n$.
Since the complexes are balanced, the multiplicity of the stable intersection does not depend of the choice of $v$, hence it is well-defined. 
\end{definition} 

\begin{remark}
  \label{rem:stableIntersection}
  If $\Sigma_1,\Sigma_2$ are balanced polyhedral complexes, then the stable intersection $\Sigma_1\wedge\Sigma_2$ is either empty or a balanced polyhedral complex of codimension $\codim(\Sigma_1)+\codim(\Sigma_2)$ \cite[Theorem 3.6.10]{MaclaganSturmfels15}.  Moreover, we can also characterise the stable intersection as the limit of the generic perturbation
  \begin{equation} \label{eq:stable+intersection+perturbation}
    \Sigma_1\wedge\Sigma_2 = \lim_{\varepsilon\rightarrow 0} \Sigma_1\cap (\Sigma_2+\varepsilon\cdot v)
  \end{equation}
  for any generic $v\in\RR^n$, where the multiplicity of a point is the sum of the multiplicities of all points that tend to it \cite[Proposition 3.6.12]{MaclaganSturmfels15}.  The stable intersection is associative \cite[Remark 3.6.14]{MaclaganSturmfels15}, hence the stable intersection $\Sigma_1 \wedge \cdots \wedge \Sigma_k$ is well-defined for $k>2$.
\end{remark}

\begin{definition}
  \label{def:tropicalIntersectionProduct}
  Let $\Sigma_1,\dots,\Sigma_k$ be balanced polyhedral complexes in $\RR^n$ of complementary dimension, i.e., $\codim(\Sigma_1)+\dots+\codim(\Sigma_k)=n$.
  Their \emph{tropical intersection product} is the number of points in their stable intersection counted with multiplicity:
  \[ \Sigma_1\cdot\ldots\cdot \Sigma_k\coloneqq \sum_{p \in\Sigma_1 \wedge \cdots \wedge \Sigma_k} \mult_{\Sigma_1 \wedge \cdots \wedge \Sigma_k}(p) \, . \]
\end{definition}

We close the tropical preliminaries paper with the commonly known fact that the tropical intersection number of balanced complexes is invariant under translation.

\begin{lemma}
  \label{lem:tropicalIntersectionProductTranslationInvariant}
  Let $\Sigma_1,\dots,\Sigma_k$ be balanced polyhedral complexes in $\RR^n$ of complementary dimension and let $v_1,\dots,v_k\in\RR^n$.  Then
  \[ \Sigma_1\cdot\ldots\cdot \Sigma_k = (\Sigma_1+v_1)\cdot \ldots\cdot(\Sigma_k+v_k) \, . \]
\end{lemma}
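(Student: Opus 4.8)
The plan is to reduce to the case $k=2$ and then handle that case by a deformation (``conservation of number'') argument built on the perturbation description of stable intersection in \eqref{eq:stable+intersection+perturbation}.

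\textbf{Reduction to $k=2$.} First, the intersection product is unchanged when all of the $\Sigma_i$ are translated by the \emph{same} vector $w$: the conditions $\dim(\sigma_1+\dots+\sigma_k)=n$ depend only on direction spaces, the lattices satisfy $N_{\tau+w}=N_\tau$, and the genericity condition in \Cref{def:stableIntersection} is preserved because $(\tau_1+w)\cap(\tau_2+w+\varepsilon v)=\big(\tau_1\cap(\tau_2+\varepsilon v)\big)+w$; hence $(\Sigma_1+w)\wedge\dots\wedge(\Sigma_k+w)=(\Sigma_1\wedge\dots\wedge\Sigma_k)+w$ as weighted complexes, so the products agree. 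Translating everything by $-v_1$, it suffices to show $\Sigma_1\cdot(\Sigma_2+u_2)\cdot\ldots\cdot(\Sigma_k+u_k)=\Sigma_1\cdot\ldots\cdot\Sigma_k$ with $u_i:=v_i-v_1$. Granting the $k=2$ statement for \emph{arbitrary} balanced complexes of complementary dimension, we strip off the $u_i$ one at a time: by associativity of stable intersection (\Cref{rem:stableIntersection}),
\[
  \Sigma_1\cdot(\Sigma_2+u_2)\cdots(\Sigma_k+u_k)=\big(\Sigma_1\wedge(\Sigma_2+u_2)\wedge\dots\wedge(\Sigma_{k-1}+u_{k-1})\big)\cdot(\Sigma_k+u_k),
\]
and the $k=2$ case (applied to the balanced complex in parentheses, which has complementary codimension to $\Sigma_k$) allows the last factor $\Sigma_k+u_k$ to be replaced by $\Sigma_k$; iterating over $u_{k-1},\dots,u_2$ completes the reduction.

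\textbf{The case $k=2$.} Let $\Sigma,\Sigma'$ be balanced complexes in $\RR^n$ of complementary dimensions and $v\in\RR^n$. Call $u\in\RR^n$ \emph{admissible} if every non-empty $\sigma\cap(\sigma'+u)$ with $\sigma\in\Sigma$, $\sigma'\in\Sigma'$ is a single point lying in the relative interiors of $\sigma$ and $\sigma'+u$, with $\dim(\sigma+\sigma')=n$. The non-admissible $u$ form a subset of a finite union of affine subspaces of dimension $<n$, and for admissible $u$ the intersection $\Sigma\cap(\Sigma'+u)$ is finite and transverse; put
\[
  N(u):=\sum_{p\in\Sigma\cap(\Sigma'+u)}\mult_{\Sigma}(\tau_1)\,\mult_{\Sigma'}(\tau_2)\,[\ZZ^n:N_{\tau_1}+N_{\tau_2}],
\]
where $\tau_1\in\Sigma$, $\tau_2\in\Sigma'$ are the maximal cells with $p\in\operatorname{relint}\tau_1\cap\operatorname{relint}(\tau_2+u)$. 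By \eqref{eq:stable+intersection+perturbation}, $\varepsilon w$ and $v+\varepsilon w$ are admissible for generic $w$ and small $\varepsilon>0$, with $N(\varepsilon w)=\Sigma\cdot\Sigma'$ and $N(v+\varepsilon w)=\Sigma\cdot(\Sigma'+v)$, so it is enough to see that $N$ agrees at these two admissible points. Now $N$ is locally constant on the admissible locus (the combinatorial type of the transverse intersection is locally constant, and multiplicities are attached to cell pairs), and any two admissible points are joined by a generic piecewise-linear path meeting the non-admissible locus transversally in finitely many points; so it remains to show $N$ is unchanged as the path crosses one such point.

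\textbf{The crossing analysis is the main obstacle.} Fix a crossing parameter $t^\ast$ along the path $t\mapsto u(t)$, and let $p^\ast$ be a limit of intersection points as $t\to t^\ast$; the intersection points trace piecewise-affine arcs in $t$, hence stay bounded, so there are only finitely many such $p^\ast$, and points that merely pass through $t^\ast$ inside the interior of a fixed cell pair contribute unchanged. On a small ball about $p^\ast$, $\Sigma$ agrees with $p^\ast+C$ and $\Sigma'+u(t^\ast)$ with $p^\ast+C'$, where $C=\operatorname{Star}_{p^\ast}(\Sigma)$ and $C'=\operatorname{Star}_{p^\ast}(\Sigma')$ are balanced fans of complementary dimension (the star of a balanced complex is balanced), and $\Sigma'+u(t)=p^\ast+C'+\big((t-t^\ast)\dot u+o(t-t^\ast)\big)$. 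As $C,C'$ are cones, $C\cap\big(C'+s\dot u+o(s)\big)$ is a scaled copy of $C\cap(C'+\dot u+o(1))$ for $s>0$ and of $C\cap(C'-\dot u+o(1))$ for $s<0$; in either case, for $|s|$ small, all its intersection points lie in the ball, and since $\dot u$ is generic the $o(1)$ term is a transversality-preserving perturbation. The total multiplicity of $C\cap(C'+s\dot u)$ for $s\to 0^+$ equals $C\cdot C'$ by \eqref{eq:stable+intersection+perturbation}, and — because the stable-intersection multiplicity does not depend on the generic perturbation vector (the remark after \Cref{def:stableIntersection}) — perturbing in the direction $-\dot u$ gives the same value $C\cdot C'$ for $s\to 0^-$. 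Hence the local contribution to $N$ near $p^\ast$ is the same on both sides of $t^\ast$, so $N$ does not jump; $N$ is therefore constant on the path, giving $\Sigma\cdot\Sigma'=\Sigma\cdot(\Sigma'+v)$. The genuinely delicate point is this local count: one must verify that every intersection point near $p^\ast$ is captured by the star-fan intersection $C\cap(C'+\cdot)$ and that the $o(t-t^\ast)$ perturbation of $C'$ does not alter the transverse count — which is exactly where properness of the finitely many cells meeting near $p^\ast$ and genericity of the path are used.
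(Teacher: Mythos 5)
Your proposal is correct and takes essentially the same route as the paper: the paper likewise reduces to $k=2$, considers the function $t\mapsto(\Sigma_1+t\cdot v_1)\cdot\Sigma_2$ along the segment of translations, deduces that it is locally constant from the perturbation characterisation of stable intersection (\Cref{rem:stableIntersection}), and concludes by connectedness of $[0,1]$. Your admissible-locus and wall-crossing analysis via star fans is an explicit unpacking of the perturbation-invariance and independence-of-direction facts that the paper simply cites, so the two arguments differ only in level of detail, not in approach.
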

\begin{proof}
  Without loss of generality, we may assume that $k=2$ and that $v_2=(0, \ldots,0)$. Consider the function $m : [0,1] \rightarrow \ZZ$ given by $t \mapsto (\Sigma_1+t\cdot v_1)\cdot \Sigma_2$.  
  By \cref{rem:stableIntersection}, the tropical intersection product is invariant under perturbation, hence $m$ is locally constant on $[0,1]$.  Since $[0,1]$ is connected, it follows that $m$ is constant.
\end{proof}

\subsection{Generic root count preliminaries}\label{subsect:genericRootCountPreliminaries}

In Section~\ref{sect:numberViaTropInt} we will use the basics and notation of parametrised polynomial systems, and specifically root counts, as a bridge between the realisation number and the tropical intersection product of some ideals.

\begin{definition}
  Let
  \begin{equation*}
      \CC[a][x^\pm]\coloneqq \CC\big[a_{j}\mid j\in [m] \big]\big[x_i^{\pm1}\mid i\in [n]\big]
  \end{equation*}
  be a \emph{parametrised (Laurent) polynomial ring} with parameters $a_j$ and variables $x_i$. Let $f\in \CC[a][x^\pm]$ be a parametrised polynomial, say $f=\sum_{\alpha\in\ZZ^n}c_\alpha x^\alpha$ with $c_\alpha\in \CC[a]$, and let $I\subseteq \CC[a][x^\pm]$ be a parametrised polynomial ideal.  We define their \emph{specialisation} at a choice of parameters $P\in \CC^m$ to be
  \[ f_P\coloneqq\sum_{\alpha\in\ZZ^n}c_\alpha(P)\cdot x^\alpha\in \CC[x^\pm] \quad\text{and}\quad I_P\coloneqq\langle h_P\mid h \in I \rangle\subseteq \CC[x^\pm]. \]
  Moreover, the \emph{root count} of $I$ at $P$ is defined to be the vector space dimension $\ell_{I,P}\coloneqq \dim_\CC (\CC[x^\pm]/I_P)\in\ZZ_{\geq 0}\cup\{\infty\}$.
\end{definition}

\begin{definition}
  Let $I\subseteq \CC[a][x^\pm]$ be a parametrised polynomial ideal.  Let $\CC(a)\coloneqq \CC(a_j\mid j\in [m])$ denote the rational function field in the parameters $a_j$.  The \emph{generic specialisation} of $I$ is the ideal in $\CC(a)[x^\pm]$ generated by $I$, i.e.,
  \[ I_{\CC(a)}\coloneqq \langle h\mid h \in I\rangle \subseteq \CC(a)[x^\pm]. \]
  The \emph{generic root count} of $I$ is $\ell_{I,\CC(a)}\coloneqq \dim_{\CC(a)}(\CC(a)[x^\pm]/I_{\CC(a)})\in\ZZ_{\geq 0}\cup\{\infty\}$.

  We say that $I$ is generically a complete intersection if $I_{\CC(a)}$ is a complete intersection, and that $I$ is generically zero-dimensional if $I_{\CC(a)}$ is zero-dimensional (in which case $\ell_{I,\CC(a)}<\infty$).
\end{definition}

\begin{remark}
  \label{rem:genericRootCount}
  The name ``root count'' for the vector space dimension $\ell_{I,P}$ is derived from the fact that it is the number of roots counted with a suitable algebraic multiplicity \cite[\S 4, Corollary 2.5]{CLS05}. The name ``generic root count'' for the vector space dimension $\ell_{I,\CC(a)}$ is justified by the fact that there is an Zariski-open subset in the parameter space $U\subseteq\CC^{|a|}$ over which it is attained, i.e., $\ell_{I,P}=\ell_{I,\CC(a)}$ for all $P\in U$.
\end{remark}

\begin{example}
    Consider the parametrised principal ideal
    \[ I\coloneqq\langle a_0+a_1x+a_2x^2\rangle\subseteq\CC[a_0,a_1,a_2][x^\pm]. \]
    Then $\ell_{I,(0,0,0)}=\infty$, $\ell_{I,(1,0,0)}=0$, $\ell_{I,(0,1,0)}=1$ and the generic root count is $\ell_{I,\CC(a)}=2$, which is attained whenever $a_2\neq 0$.
\end{example}

Let $I_1,\ldots,I_r$ be ideals of $\mathbb{C}[a][x^{\pm}]$.
We say that $I_1,\ldots,I_r$ are \emph{parametrically independent} if there exists a partition $\bigsqcup_{i=1}^r A_i = [m]$ and generator sets $F_i \subset \mathbb{C}[a_j : j \in A_i][x^{\pm}]$ such that $I_i = \langle F_i \rangle$ for each $i \in [r]$.

Recall that the algebraic torus $(\mathbb{C}^*)^n$ is an algebraic group with group multiplication
\begin{equation*}
    (\mathbb{C}^*)^n \times (\mathbb{C}^*)^n \rightarrow (\mathbb{C}^*)^n, ~ \big((s_i)_{i \in [n]}, (t_i)_{i \in [n]} \big) \mapsto (s_i \cdot t_i)_{i \in [n]}.
\end{equation*}
A parametrised polynomial ideal $I$ of $\mathbb{C}[a][x^{\pm}]$ is \emph{torus equivariant} if there exists a torus group action
\begin{equation*}
    (\mathbb{C}^*)^n \times \mathbb{C}^m \rightarrow \mathbb{C}^m, ~ \big(t,P\big) \mapsto t * P
\end{equation*}
such that $V(I_{t * P}) = t \cdot V(I_P)$, i.e. the natural torus group action on $V(I_P)$ corresponds to a torus group action on parameter space.

We now state our key tool for analysing parametric polynomial systems via tropical geometry.
Although this was first shown in~\cite{HR22}, we give a simplified version that appeared in \cite{holtren2023}.

\begin{proposition}[{\cite[Proposition 1]{holtren2023}}]\label{prop:keyprop}
    Let $I_1,\ldots, I_r$ be parametrised polynomial ideals of $\mathbb{C}[a][x^{\pm}]$ such that $\sum_{i=1}^r \codim I_{i, \mathbb{C}(a)} = n$.
    Further suppose that:
    \begin{enumerate}
        \item $I_1,\ldots,I_{r-1}$ are torus equivariant;
        \item $I_1,\ldots,I_{r}$ are parametrically independent.
    \end{enumerate}
    Then,
    given $I = I_1 + \ldots + I_r$,
    the following equality holds for any generic $P \in \mathbb{C}^m$:
    \begin{equation*}
        \ell_{I, \mathbb{C}(a)} = \Trop (I_{1,P}) \cdot \ldots \cdot \Trop (I_{r,P}).
    \end{equation*}
\end{proposition}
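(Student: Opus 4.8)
\textbf{Proof plan for \Cref{prop:keyprop}.}

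The plan is to reduce the statement to the cited result \cite[Proposition 1]{holtren2023} (equivalently \cite{HR22}), whose hypotheses are exactly conditions (1) and (2) together with the complementary-codimension assumption $\sum_i \codim I_{i,\CC(a)} = n$. Since the excerpt explicitly presents \Cref{prop:keyprop} as a restatement of that result, the ``proof'' here is really a matter of matching up notation and verifying that the hypotheses as phrased in this paper coincide with those in the source. So the first step is to recall precisely what \cite[Proposition 1]{holtren2023} asserts: given parametrised ideals $I_1,\dots,I_r \subseteq \CC[a][x^\pm]$ that are parametrically independent, with the first $r-1$ torus equivariant, and with codimensions summing to $n$, the generic root count of $I = I_1 + \dots + I_r$ equals the tropical intersection product $\Trop(I_{1,P})\cdot\ldots\cdot\Trop(I_{r,P})$ for generic $P$. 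Then I would observe that every hypothesis in our statement is literally one of these, so the conclusion follows verbatim.

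If instead one wants a self-contained sketch of \emph{why} the equality holds, the argument proceeds as follows. First, by \Cref{rem:genericRootCount} there is a Zariski-open $U \subseteq \CC^m$ on which the specialised root count $\ell_{I,P}$ equals the generic root count $\ell_{I,\CC(a)}$; so it suffices to prove $\ell_{I,P} = \Trop(I_{1,P})\cdot\ldots\cdot\Trop(I_{r,P})$ for $P$ in a suitable open set. Next, for a \emph{single} zero-dimensional ideal $J_P$ over $\CC$ whose tropicalisation we wish to relate to a root count, one uses that $\CC \subset \CC\{\!\{t\}\!\}$ has trivial valuation, so $\Trop(J_P)$ computed over $\CC$ agrees with the tropicalisation of the base-changed ideal over $\CC\{\!\{t\}\!\}$, and the Fundamental Theorem \cite[Theorem 3.2.3]{MaclaganSturmfels15} identifies this with the closure of coordinatewise valuations of points of $V(J_P)$. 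The crux is the interplay between the root count of the \emph{sum} $I = I_1 + \dots + I_r$ and the \emph{stable intersection} of the individual tropical varieties: the parametric independence lets one, after a generic torus translation of the first $r-1$ systems (which changes neither their tropicalisations nor the root count, by torus equivariance), realise the sum $I_P$ as an intersection of the $V(I_{i,P})$ that is transverse enough for the tropicalisation to commute with intersection; one then invokes \cite[Theorem 3.6.1]{MaclaganSturmfels15} (tropicalisation of a generic translate of a sum is the stable intersection) and \cite[Definition 3.6.10, Remark 3.6.14]{MaclaganSturmfels15} (the stable intersection of complementary-dimensional balanced complexes counted with multiplicity is a well-defined number, associatively for $r > 2$) to conclude that this number equals the generic root count.

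The technical heart — and the step I expect to be the main obstacle — is exactly this transversality/genericity argument: showing that, after a generic choice of parameters (and the torus translation afforded by equivariance), the tropical varieties $\Trop(I_{i,P})$ meet in the ``expected'' stably-transverse way so that the tropical intersection product genuinely computes the root count, rather than over- or under-counting. This is where the hypotheses do real work: parametric independence ensures the parameters of the different blocks can be perturbed independently, torus equivariance ensures a torus translation on one factor can be absorbed into a change of parameters without altering the root count, and the complementary-codimension condition ensures the stable intersection is zero-dimensional so that ``$\Sigma_1 \cdot \ldots \cdot \Sigma_r$'' is a finite number at all. Since all of this is carried out in full in \cite{HR22} and \cite{holtren2023}, the honest proof in the paper is simply: ``This is \cite[Proposition 1]{holtren2023}, whose hypotheses are (1), (2), and the codimension condition.'' I would write that one-line proof and cite accordingly.
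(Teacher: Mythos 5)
Your proposal is correct and matches the paper's treatment: the paper states this result as an import, citing \cite[Proposition 1]{holtren2023} (originally from \cite{HR22}), and gives no internal proof, so the one-line citation with hypothesis-matching is exactly what is done. Your additional sketch of the underlying mechanism (genericity of the root count, torus translation absorbed into parameters, stable intersection of complementary-codimension tropicalisations) is a reasonable summary of the cited argument but is not required by the paper.
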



\section{Realisation numbers via tropical intersection theory}\label{sect:numberViaTropInt}

In this section we express the realisation number $c_2(G)$ as a tropical intersection product. We do this by tropicalising a zero-dimensional ideal whose generic root count relates to the realisation number.
We begin with any positive integer $d$.
As a consequence of \Cref{lem:maxwelllike}, we label the vertices so that $1i$ is an edge for all $2 \leq i \leq d$.

\begin{definition}
  \label{def:polynomialSystemEdgeVariables}
  Let $G$ be minimally $d$-rigid for some $d\in\ZZ_{>0}$.
  Consider the parametrised Laurent polynomial ring
  \begin{align*}
    \CC[a,c][y^\pm]&\coloneqq \CC \Big[a_{ij}\mid ij \in E(G), i<j \Big] \Big[ c_{l,k}\mid l\in[d-1], k\in[d]\Big]\\
                   &\qquad \left[ y_{ij,k}^{\pm1}\mid ij\in E(G), i<j, k\in [d] \right]
  \end{align*}
  with parameters $a_{ij}, c_{l,k}$ and variables $y_{ij,k}$.
  Let $I_E$ be the parametrised polynomial ideal generated by
  \begin{equation}
    \label{eq:generalSystemEdgeVariables}
    \begin{aligned}
      f_{ij}&\coloneqq \sum_{k=1}^d y_{ij,k}^2-a_{ij} &&\text{for }  ij \in E(G)\\
      g_{i,l}&\coloneqq \sum_{k=1}^d c_{l,k}y_{1i,k} &&\text{for } i\in[d]\setminus\{1\}\text{ and }l\in[d+1-i]\\
      h_{C,k}&\coloneqq \sum_{(s,t)\in C} y_{st,k} &&\text{for each directed cycle $C$ of $G$ and } k\in [d],
    \end{aligned}
  \end{equation}
  where a directed cycle is a directed path starting and ending at the same vertex with all other vertices distinct, and we again define $y_{st,k}\coloneqq -y_{ts,k}$ for $s>t$.
  We will refer to the $f_{ij}$ as the \emph{edge-length polynomials}, and $g_{i,l}$ as the \emph{vertex-pinning polynomials}.
\end{definition}

It is somewhat natural that these polynomials appear and count what we want. The variables $y_{ij,k}$ should be thought of as the coordinates of the vectors representing the edges of a realisation of $G$ in $\CC^d$ with vertex $1$ placed at the origin (or a fixed translate); hence, as their names suggest, the edge-length polynomials are exactly parametrising the generic edge-lengths, and the vertex-pinning polynomials are there to fix linear spaces where the vertices $\{2,\ldots,d\}$ are in order to eliminate multiple congruent realisations being counted. The appearance of the last type of polynomials are also not surprising: the sum of vectors making a directed cycle should add up to zero in Euclidean space. The interesting fact is that these polynomials are exactly what we need. We illustrate this with the following example.

\begin{example}
    Consider the minimally 2-rigid graph $K_4^-$, as shown in \Cref{fig:k4-e}. We have that $[d]\setminus\{1\}=[2]\setminus\{1\}=\{2\}$ and $[d+1-2]=[2+1-2]=\{1\}$, so there is only one vertex-pinning polynomial
    $$g_{2,1}= c_{1,1}y_{12,1}+c_{1,2}y_{12,2}.$$
    Intuitively, it fixes a slope for the edge $12\in E(K_4^-)$. Together with the polynomial $f_{12} = y_{12,1}^2+y_{12,2}^2-a_{12}$, we can think of the vertex $2$ being fixed (although in $\CC^d$ this can be false), which corresponds to counting up to rotations. The rest of the edge-length polynomials $f_{13},f_{14},f_{23},f_{34}$ fix the lengths of the edges $\{13,14,23,34\}$. The polynomials
    \begin{equation*}
        \begin{aligned}
        h_{C_1,1} &= y_{12,1} + y_{23,1} - y_{13,1}  &  h_{C_1,1} &= y_{12,2} + y_{23,2} - y_{13,2} \\
        h_{C_2,1} &= y_{13,1} + y_{34,1} - y_{14,1}  &  h_{C_2,1} &= y_{13,2} + y_{34,2} - y_{14,2} \\
        h_{C_3,1} &= y_{12,1} + y_{23,1} + y_{34,1} - y_{14,1}  &  h_{C_3,1} &= y_{12,2} + y_{23,2} + y_{34,2} - y_{14,2}
        \end{aligned}
    \end{equation*}
    are there to ensure that the vectors $(y_{ij,1}),(y_{ij,2})$ actually come from a realisation of $K_4^-$ in $\CC^2$.
\end{example}

The variables $y_{ij,k}$ in terms of the edges of $G$ will give us the interpretation as a tropical intersection product in terms of Bergman fans, but the realisation number is defined through vertex coordinates given by a realisation. In order to relate these two we need an intermediate ideal connecting them. The idea is simple: the coordinate $y_{ij,k}$ should represent the $k$-th coordinate $x_{i,k}-x_{j,k}$ (although this is the edge from $j$ to $i$ geometrically, we think of the edge from $i$ to $j$).

\begin{definition}
  \label{def:polynomialSystemVertexVariables}
  Given a positive integer $d$,
  let $G$ be minimally $d$-rigid with $n \geq d+1$.  Consider the parametrised Laurent polynomial ring
  \begin{equation*}
    \CC[a,b][x^\pm]\coloneqq \CC \Big[a_{ij}\mid ij \in E(G), i<j \Big] \Big[ b_{l,k}\mid l,k\in[d] \Big]\left[ x_{i,k}^{\pm1}\mid i\in [n], k\in [d] \right]
  \end{equation*}
  with parameters $a_{ij}, b_{l,k}$ and variables $x_{i,k}$.  Let $I_V\subseteq \CC[a,b][x^\pm]$ be the ideal generated by
  \begin{equation}
    \label{eq:generalSystemVertexVariables}
    \begin{aligned}
      f_{ij}^V&\coloneqq \sum_{k=1}^d (x_{i,k}-x_{j,k})^2 - a_{ij} &&\text{for } ij \in E(G), \mbox{ and}\\
      g_{i,l}^V&\coloneqq \sum_{k=1}^d b_{l,k}(x_{i,k}-1) &&\text{for } i\in[d] \text{ and } l\in [d+1-i].
    \end{aligned}
  \end{equation}
  We will refer to the $f_{ij}^V$ as the \emph{edge-length polynomials}, and to the $g_{i,l}^V$ as the \emph{vertex-pinning polynomials}.
\end{definition}

These new edge-length polynomials relate directly to the previous ones, both fixing edge-lengths. The vertex-pinning polynomials are the equations defining $X$ in \Cref{lem:realisationNumberAsGenericFibreCardinality}. In particular,
if $P$ is a generic choice of parameters, then $I_{V,P}$ is the ideal of the fibre of a generic point under the restricted rigidity map as defined in \Cref{lem:realisationNumberAsGenericFibreCardinality} and, by the same lemma, we should then have a generic root count equal to $2^dc_d(G)$. To prove it, however, we need to make this precise. The main detail is that the genericity condition for the $b_{l,k}$ in the definition of $X$ is in real affine space, while the genericity condition for the generic root count of $I_V$ is in complex affine space.  For this let us recall some basic results on algebraic geometry:
\begin{enumerate}[label=(\alph*)]
    \item\label{rem:1forLemma} If $U\subseteq\CC^r$ is a non-empty Zariski open subset of $\CC^r$, then $U\cap\RR^r$ is a non-empty Zariski open subset of $\RR^r$ (see, for example, \cite[Lemma 3.7]{dewar2024number}).
    \item\label{rem:2forLemma} Let $U\subseteq\CC^{r_1}\times\CC^{r_2}$ be a non-empty Zariski open subset of $\CC^{r_1+r_2}$ and $\pi_i:\CC^{r_1}\times\CC^{r_2}\longrightarrow\CC^{r_i}$ the respective projections. Then, for any $y_0\in\pi_2(U)\subseteq\CC^{r_2}$, the projection of the preimage $\pi_1(\pi_2^{-1}(y_0)\cap U)\subseteq \CC^{r_1}$ is a non-empty Zariski open subset of $\CC^{r_1}$.
    Indeed, if
    \begin{equation*}
        U=\CC^{r_1+r_2}\setminus V(f_1(x,y),\ldots,f_k(x,y)),
    \end{equation*}
    then
    \begin{equation*}
        \pi_1(\pi_2^{-1}(y_0)\cap U)=\CC^{r_1}\setminus V(f_1(x,y_0),\ldots,f_k(x,y_0)).
    \end{equation*}
\end{enumerate}

\begin{lemma}
 \label{lem:realisationNumberGeneralVertexVariables}
 Given $I_V\subseteq\CC[a,b][x^\pm]$ as defined above, the generic root count $\ell_{I_V,\CC(a,b)}$ is equal to $2^d c_d(G)$.
\end{lemma}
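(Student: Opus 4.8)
The plan is to connect the root count of $I_V$ to the geometric count in \Cref{lem:realisationNumberAsGenericFibreCardinality}, the main subtlety being that the pinning parameters $b_{l,k}$ in that lemma are required to be a \emph{real} basis, whereas the generic root count $\ell_{I_V,\CC(a,b)}$ is attained on a Zariski-open subset of the \emph{complex} parameter space $\CC^{|a|+|b|}$. So the first step is to invoke \Cref{rem:genericRootCount}: there exists a non-empty Zariski-open $U\subseteq\CC^{|a|}\times\CC^{d\times d}$ such that $\ell_{I_{V},(A,B)}=\ell_{I_V,\CC(a,b)}$ for all $(A,B)\in U$. I want to intersect this $U$ with the locus where $B$ is a basis of $\RR^d$ and where the geometric fibre count equals $2^dc_d(G)$, and conclude that this intersection is non-empty; then I can evaluate $\ell_{I_V,\CC(a,b)}$ at a single well-chosen point.

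Concretely, I would proceed as follows. The condition ``$B=(b_{l,k})$ has linearly independent rows'' is Zariski-open in the $b$-parameters, so shrinking $U$ we may assume it holds on all of $U$. Now I use \Cref{rem:2forLemma}: viewing $\CC^{|a|+d^2}=\CC^{|a|}\times\CC^{d^2}$ with $a$-coordinates first, for each fixed $B$ with $(a,B)\in U$ for some $a$, the set $\{A : (A,B)\in U\}$ is a non-empty Zariski-open subset of $\CC^{|a|}$. Pick any such $B$ whose rows $b_1,\dots,b_d$ additionally form a \emph{real} basis --- possible because the real points of a non-empty Zariski-open set of $\CC^{d^2}$ are dense by \Cref{rem:1forLemma}, applied to the (non-empty, by what we just said) open set $\pi_2(U)$. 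With this $B$ fixed, set $(c_1,\dots,c_d)=[b_1\cdots b_d](1,\dots,1)$ and let $X$ be the pinning variety of \Cref{lem:realisationNumberAsGenericFibreCardinality}; note $X=V(g_{i,l}^V)$ for exactly these $b$-values, since $g_{i,l}^V=\sum_k b_{l,k}(x_{i,k}-1)$ vanishes precisely when $p_i\cdot b_l = \sum_k b_{l,k} = c_l$. By \Cref{prop:minrigidequiv}, minimal $d$-rigidity makes $f_{G,d}$ dominant, so the Zariski closure $Y$ of its image is $\CC^{|E(G)|}$ and \Cref{lem:realisationNumberAsGenericFibreCardinality} gives $\#(f_{G,d}^{-1}(\lambda)\cap X)=2^dc_d(G)$ for generic $\lambda\in\CC^{|E(G)|}$ --- i.e. for $\lambda$ in some non-empty Zariski-open $W\subseteq\CC^{|E(G)|}$.

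The final step ties the two notions of genericity together. The fibre $f_{G,d}^{-1}(\lambda)\cap X$ is exactly the complex solution set $V(I_{V,(\lambda,B)})$, since the edge-length equations $f_{ij}^V=\sum_k(x_{i,k}-x_{j,k})^2-a_{ij}$ cut out $f_{G,d}^{-1}(\lambda)$ when $a_{ij}=\lambda_{ij}$, and the pinning equations cut out $X$. Moreover, because $I_{V,\CC(a,b)}$ is zero-dimensional (its root count is finite, being $2^dc_d(G)<\infty$ by what we have just shown geometrically --- or one can argue dimension directly), the root count $\ell_{I_V,(\lambda,B)}$ counts the points of $V(I_{V,(\lambda,B)})$ with algebraic multiplicity, and at a generic $\lambda$ all multiplicities are $1$ so it equals the cardinality. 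Now I choose $\lambda$ in the non-empty open set $W\cap\{A:(A,B)\in U\}$ (non-empty as an intersection of two non-empty Zariski-opens in the irreducible $\CC^{|E(G)|}$): for this $\lambda$ we simultaneously have $\ell_{I_V,(\lambda,B)}=\ell_{I_V,\CC(a,b)}$ and $\ell_{I_V,(\lambda,B)}=\#(f_{G,d}^{-1}(\lambda)\cap X)=2^dc_d(G)$, which is the claim.

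The main obstacle is bookkeeping the two layers of genericity --- real versus complex, and the $a$-parameters versus the $b$-parameters --- and in particular verifying that one can choose the pinning matrix $B$ to be both ``root-count-generic'' and an honest real basis simultaneously; this is where \Cref{rem:1forLemma,rem:2forLemma} do the real work. A secondary point needing care is the passage between ``root count'' (vector space dimension $\dim_\CC\CC[x^\pm]/I_{V,(\lambda,B)}$, which a priori counts with multiplicity) and the honest cardinality $\#(f_{G,d}^{-1}(\lambda)\cap X)$; this is handled by \Cref{rem:genericRootCount}'s comment on \cite[\S 4, Corollary 2.5]{CLS05} together with the fact that a generic fibre of a dominant morphism to $\CC^{|E(G)|}$ is reduced, so after shrinking $W$ the two agree.
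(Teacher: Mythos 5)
Your proposal is correct and takes essentially the same route as the paper's proof: use the openness of the root-count-generic parameter locus, apply the two remarks on real points and slices of Zariski-open sets to choose a real invertible pinning matrix whose $a$-slice of that locus remains open, then pick $\lambda$ in the intersection with the open set from \Cref{lem:realisationNumberAsGenericFibreCardinality} (using minimal rigidity to replace $Y$ by $\CC^{|E(G)|}$), and identify $V(I_{V,(\lambda,B)})$ with the pinned fibre. Your added care in passing from the multiplicity-counting root count to the honest cardinality via generic reducedness addresses a step the paper merely asserts, so it is a refinement rather than a different argument.
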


\begin{proof}
Let $\pi_b:\CC^{|a|+|b|}\longrightarrow\CC^{|b|}$ be the projection to the $b$-coordinates
and define the sets:
\begin{align*}
    U_0&\coloneqq \left\{P\in\CC^{|a|+|b|} \mid \ell_{I_V,\CC(a,b)}=\ell_{I_V,P} \right\}\subseteq \CC^{|a|+|b|},\\
    U&\coloneqq\pi_b(U_0)\subseteq \CC^{|b|},\\
    U_\RR&\coloneqq U\cap\RR^{|b|}\subseteq \RR^{|b|} \mbox{ and}\\
    U'&\coloneqq \left\{(b_1,\ldots,b_d)\in\RR^{d\times d}=\RR^{|b|} \mid \det([b_1\cdots b_d])\neq 0 \right\}\subseteq \RR^{|b|}.
\end{align*}
By \cref{rem:genericRootCount}, $U_0$ is a non-empty Zariski open subset of $\CC^{|a|+|b|}$. Hence $U$ is a non-empty Zariski open subset of $\CC^{|b|}$. Together with \ref{rem:1forLemma}, this implies that $U_\RR$ is a non-empty Zariski open subset of $\RR^{|b|}$. Therefore, as $U'$ is a non-empty Zariski open subset of $\RR^{|b|}$, the irreducibility of $\RR^{|b|}$ gives us an intersection point $\beta\in U'\cap U_\RR\subseteq U'\cap U$.

Now, if we denote by     $\pi_a:\CC^{|a|+|b|}\longrightarrow\CC^{|b|}$ the projection to the $a$-coordinates, by \ref{rem:2forLemma} we have that $\pi_a(\pi_b^{-1}(\beta)\cap U_0)$ is a non-empty Zariski open subset of $\CC^{|a|}$. On the other hand, by \Cref{lem:realisationNumberAsGenericFibreCardinality} we have that
$$\left\{\lambda\in\CC^{|a|} \mid 2^d c_{d}(G) = \#\left(f_{G,d}^{-1}(\lambda) \cap X \right) \right\}$$
is a non-empty Zariski open subset of $\CC^{|a|}$, where $X$ is defined as in \cref{lem:realisationNumberAsGenericFibreCardinality} with basis $b_1,\ldots,b_d\in\RR^d$ given by $\beta=(b_1,\ldots,b_d)\in\RR^{d\times d}=\RR^{|b|}$. Thus, the irreducibility of $\CC^{|a|}$ gives us an intersection point $\alpha$ in
\begin{equation*}
    \pi_a \left(\pi_b^{-1}(p_0)\cap U_0 \right)\cap\left\{\lambda\in\CC^{|a|} \mid 2^d c_{d}(G) = \#\left(f_{G,d}^{-1}(\lambda) \cap X \right) \right\}.
\end{equation*}
Then, taking $P=(\alpha, \beta)$, we observe that:
\begin{enumerate}
    \item $P\in U_0$, and thus $\ell_{I_V,\CC(a,b)}=\ell_{I_V,P}$;
    \item $2^d c_{d}(G) = \#\left(f_{G,d}^{-1}(\alpha) \cap X \right)$.
\end{enumerate}
Finally, note the equations $f_{ij,P}^V=0$ cut out the fibre $f_{G,d}^{-1}(\alpha)$, and the equations $g_{i,l,P}^V=0$ cut out the restricted domain $X$, so that $\ell_{I_V,P}=\#(f_{G,d}^{-1}(\alpha) \cap X )$. By the observations (1) and (2) above, we have the desired equality.
\end{proof}

Now that we have $\ell_{I_V,\CC(a,b)}=2^d c_d(G)$, we want to prove that $\ell_{I_V,\CC(a,b)}=\ell_{I_E,\CC(a,c)}$. This lemma is inspired by \cite[Lemma 2.16]{cggkls}. 

\begin{lemma}
  \label{lem:realisationNumberGeneralEdgeVariables}
  Let $I_E$ be as defined in \cref{def:polynomialSystemEdgeVariables}. Then the generic root count $\ell_{I_E,\CC(a,c)}$ is equal to $2^dc_d(G)$.
\end{lemma}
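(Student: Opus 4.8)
The plan is to reduce $\ell_{I_E,\CC(a,c)}$ to $\ell_{I_V,\CC(a,b)}$, which equals $2^d c_d(G)$ by \Cref{lem:realisationNumberGeneralVertexVariables}. The key observation is that the edge-variable system $I_E$ is obtained from the vertex-variable system $I_V$ by the change of variables $y_{ij,k} \mapsto x_{i,k} - x_{j,k}$: under this substitution the edge-length polynomials $f_{ij}$ become $f_{ij}^V$, the vertex-pinning polynomials $g_{i,l}$ become (up to the harmless reparametrisation absorbing the translate-by-$1$ and the constant $c_{l,k}$ versus $b_{l,k}$, using that vertex $1$ is pinned) the $g_{i,l}^V$, and the directed-cycle polynomials $h_{C,k}$ become identically zero, since $\sum_{(s,t)\in C}(x_{s,k}-x_{t,k})$ telescopes to $0$ around any directed cycle. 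So the content of the lemma is that imposing the cycle relations $h_{C,k}=0$ is exactly the condition that makes the $y$-variables come from vertex coordinates, and that this correspondence preserves the generic root count.

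First I would make the linear-algebra bookkeeping precise. Fix a spanning tree $T$ of $G$ (which exists and has $n-1$ edges since $G$ is connected). Working in a single coordinate slot $k$, the linear subspace of $\CC^{E(G)}$ cut out by $\{h_{C,k}=0\}$ over all directed cycles $C$ is precisely the cycle space's annihilator, i.e. the cut space, which is the image of the incidence-type map sending a vertex-labelling $(x_{i,k})_i$ to $(x_{i,k}-x_{j,k})_{ij}$; its dimension is $n-1$, with the $y_{ij,k}$ for $ij\in T$ a free basis and the remaining $y_{ij,k}$ determined as $\ZZ$-linear (indeed $\{0,\pm1\}$) combinations of the tree ones. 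Thus on the torus $(\CC^*)^{E(G)\times[d]}$, the subvariety $V(\{h_{C,k}\})$ is isomorphic (via a monomial/linear change of coordinates) to a torus of the expected dimension, parametrised by $x_{i,k}-x_{j,k}$ for $i,j$ running over $V$ after fixing $x_{1,k}=1$ (the vertex-$1$ pinning normalisation matches the $x_{i,k}-1$ appearing in $g^V_{i,l}$).

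Then I would run the following two-step argument. Step one: show $\ell_{I_E,\CC(a,c)} = \ell_{I'_E,\CC(a,c)}$ where $I'_E$ is the ideal obtained by eliminating the non-tree edge variables using the relations $h_{C,k}=0$ — this is just the fact that quotienting by a subset of variables expressed linearly in the others does not change the quotient ring's dimension over $\CC(a,c)$ (Gaussian elimination over the torus). Step two: identify the resulting system, in variables $y_{ij,k}$ for $ij\in T$, with the vertex-variable system $I_V$ under the substitution $x_{i,k}-x_{j,k}\mapsto$ the corresponding $\{0,\pm1\}$-combination of tree variables, together with the normalisation $x_{1,k}=1$; one checks the edge-length and pinning generators match (after the trivial relabelling $c_{l,k}\leftrightarrow b_{l,k}$ and noting that the $b_{l,1}$-block pinning vertex $1$ is automatically satisfied and can be dropped, exactly as in \Cref{def:polynomialSystemEdgeVariables} versus \Cref{def:polynomialSystemVertexVariables}). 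Hence $\ell_{I_E,\CC(a,c)} = \ell_{I_V,\CC(a,b)} = 2^d c_d(G)$.

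The main obstacle I expect is the careful handling of the pinning polynomials and the normalisation of vertex $1$: in $I_V$ the translate is built in via $x_{i,k}-1$ and all $d$ coordinates of vertex $1$ are effectively fixed, whereas in $I_E$ vertex $1$ sits at the origin implicitly (the $y$'s are differences) and only the edges $1i$ for $2\le i\le d$ are pinned by the $g_{i,l}$; one must check these two conventions genuinely produce the same generic root count, i.e. that passing between "vertex $1$ at $(1,\dots,1)$" and "vertex $1$ at the origin" is an invertible affine reparametrisation of the parameter/variable space that does not alter genericity. This is exactly the kind of translate-reparametrisation already handled for the basis $b_1,\dots,b_d$ in the proof of \Cref{lem:realisationNumberAsGenericFibreCardinality}, so it should go through, but it is the step requiring the most care rather than the telescoping or the Gaussian elimination, which are routine.
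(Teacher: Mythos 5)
Your overall strategy is the same as the paper's: reduce to \cref{lem:realisationNumberGeneralVertexVariables} by identifying $y_{ij,k}$ with $x_{i,k}-x_{j,k}$, using the cycle polynomials $h_{C,k}$ as exactly the consistency conditions and normalising vertex $1$. The paper implements this not by spanning-tree elimination but by two explicit mutually inverse homomorphisms, $x_{i,k}\mapsto 1+\sum_{(s,t)\in\gamma_{i\to 1}}\overline y_{st,k}$ (well defined because of the $h_{C,k}$) and $y_{ij,k}\mapsto \overline x_{i,k}-\overline x_{j,k}$, which is the same idea in different clothing; your elimination step also carries the extra caveat that in a Laurent ring substituting a linear form for an invertible variable implicitly localises at that form, so one should check no generic solution of the reduced system kills an eliminated form.

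The genuine gap is in the last step, the one you yourself flag as delicate. The two ideals live over parameter spaces of different dimensions: $I_V$ carries the full $d\times d$ block $b_{l,k}$ (the vertex-$1$ pinning uses all $d$ rows, and it is precisely the invertibility of $[b_1\cdots b_d]$ that forces $\overline x_{1,k}=1$ in the quotient), whereas $I_E$ only carries the $(d-1)\times d$ block $c_{l,k}$. Hence there is no ``invertible affine reparametrisation of the parameter/variable space'' relating the two systems, and the isometry argument from \cref{lem:realisationNumberAsGenericFibreCardinality} is not the right analogue: what you actually have is a family of isomorphisms of \emph{specialised} quotients $\CC[x^\pm]/I_{V,P}\cong\CC[y^\pm]/I_{E,\rho(P)}$, indexed by $P$ in the open locus $\det[\beta_1\cdots\beta_d]\neq 0$, where $\rho$ is the (non-invertible) projection forgetting $\beta_d$. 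From this alone the equality $\ell_{I_E,\CC(a,c)}=\ell_{I_V,\CC(a,b)}$ of \emph{generic} root counts over two different function fields does not follow; one must combine the specialised isomorphisms with the fact that each generic root count is attained on a nonempty Zariski-open set of parameters (\cref{rem:genericRootCount}) and with the openness of the projection $\rho$, so that some parameter in the image of the good locus realises the generic count for $I_E$. This transfer argument is exactly what the paper's proof supplies and what your proposal is missing; with it added (and the relabelling of the remaining $b_{l,k}$ as $c_{l,k}$ made explicit via $\rho$), your argument closes.
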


\begin{proof}
    Assume the existence of some non-empty Zariski open subset $U_V\subseteq\CC^{|a|+|b|}$ and a map $\rho:U_V\rightarrow \CC^{|a|+|c|}$ with a Zariski open image such that for all $P\in U_V$ there is an isomorphism $\CC[x^\pm]/I_{V,P}\cong \CC[y^\pm]/I_{E,\rho(P)}$.
    By \cref{rem:genericRootCount}, there is some open set $U \subseteq \CC^{|a|+|b|}$ over which the generic root count $\ell_{I_V,\CC(a,b)}$ is obtained.
    Hence $U' := U \cap U_V$ is a Zariski open dense subset which for all $P \in U'$ we have $\CC[x^\pm]/I_{V,P}\cong \CC[y^\pm]/I_{E,\rho(P)}$, and in particular $\ell_{I_E,\rho(P)} = \ell_{I_V,P}$.
    Combined with Lemma~\ref{lem:realisationNumberGeneralVertexVariables}, this proves the generic root count $\ell_{I_E,\CC(a,c)}$ is equal to $2^dc_d(G)$.
    The remainder of the proof is constructing such an open set $U_V$, the map $\rho$, and the isomorphism $\CC[x^\pm]/I_{V,P}\cong \CC[y^\pm]/I_{E,\rho(P)}$.

   Define $\rho:U_V\rightarrow \CC^{|a|+|c|}$ to be the projection
    \begin{equation*}
        \rho\big(\alpha,(\beta_1,\ldots,\beta_d)\big)=\big(\alpha,(\beta_1,\ldots,\beta_{d-1})\big)
    \end{equation*}
    restricted to the Zariski open dense subset
    \begin{equation*}
    U_V\coloneqq\Big\{\big((\alpha_{ij}),(\beta_{l})\big)\in\CC^{|a|+|b|} \bigmid \det([\beta_1\cdots\beta_d])\neq 0 \Big\},
    \end{equation*}
    where each $\beta_l\coloneqq (\beta_{l,k})_{k\in[d]}$ is a vector in $\CC^d$ and $[\beta_1\cdots \beta_d]\coloneqq (\beta_{l,k})_{l\in[d],k\in[d]}$ is the matrix in $\CC^{d\times d}$ with rows $\beta_1,\dots,\beta_d$.
    As $\rho$ is a projection map, it is necessarily open.
    We claim that $\CC[x^\pm]/I_{V,P}\cong \CC[y^\pm]/I_{E,\rho(P)}$ for all $P \in U_V$.

    For this, let $P=(\alpha,(\beta_1,\ldots,\beta_d))\in U_V$ and consider the $\CC$-algebra homomorphisms
    \begin{equation*}
    \begin{aligned}
        \varphi:\CC[x^\pm]&\longrightarrow \CC[y^\pm]/I_{E,\rho(P)} \\
        x_{i,k}&\longmapsto 1+\sum_{(s,t)\in \gamma_{i\rightarrow 1}} \overline y_{st,k}
    \end{aligned} \qquad \text{and} \qquad
    \begin{aligned}
        \psi:\CC[y^\pm]&\longrightarrow \CC[x^\pm]/I_{V,P} \\
        y_{ij,k}&\longmapsto \overline x_{i,k}-\overline x_{j,k},\vphantom{\sum_{(s,t)\in \gamma_{i\rightarrow 1}}}
    \end{aligned}
    \end{equation*}
    where $\overline{f} := f + I_{V,P}$ and $\overline{g} := g + I_{E,\rho(P)}$ for any $f \in \CC[x^\pm]$ and $g \in \CC[y^\pm]$, and
    \begin{equation*}
        \gamma_{i\rightarrow 1}=\{(s_0,s_1),(s_1,s_2),\dots,(s_{r-1},s_r) \mid \{s_j,s_{j+1}\}\in E(G), s_0=i, s_r=1\}
    \end{equation*}
    is a fixed arbitrary directed path in $G$ connecting vertex $i$ to vertex $1$. Note that $\varphi$ is well-defined due to the polynomials $h_{C,k,\rho(P)}\in I_{E,\rho(P)}$.
    To see this, if $\gamma_{i \to 1}$ and $\delta_{i \to 1}$ are directed paths from $i$ to $1$, then $D= \gamma_{i \to 1} \cup \delta_{1 \to i}$ is a closed walk.
    As any closed walk can be decomposed into a union of directed cycles $\{C_1, \dots, C_m\}$, we can write
    \begin{align*}
    \sum_{(s,t)\in \gamma_{i\rightarrow 1}} \overline y_{st,k} - \sum_{(s,t)\in \delta_{i\rightarrow 1}} \overline y_{st,k} &= \sum_{(s,t)\in D} \overline y_{st,k} = \sum_{j=1}^m \sum_{(s,t)\in C_j} \overline y_{st,k} = \sum_{j=1}^m h_{C_j,k} \in I_{E, \rho(P)} \, ,
    \end{align*}
    showing $\varphi$ is well-defined.
    We need to prove that these maps define homomorphisms
    \begin{equation*}
        \overline{\varphi}:
        \CC[x^\pm]/I_{V,P}\longrightarrow \CC[y^\pm]/I_{E,\rho(P)},
        \qquad \overline{\psi}:\CC[y^\pm]/I_{E,\rho(P)}\longrightarrow \CC[x^\pm]/I_{V,P}
    \end{equation*}
    which are inverse to each other.

    First, note that $x_{1,k}-1\in I_{V,P}$, so that $\overline{x}_{1,k}=1$ in $\CC[x^\pm]/I_{V,P}$, for all $k\in[d]$. Indeed, as $\det([\beta_1\cdots\beta_d])\neq 0$, then each $x_{1,k}-1$ can be expressed as a linear combination of the $g_{1,l,P}^V=\sum_{k=1}^d \beta_{l,k}(x_{1,k}-1)\in I_{V,P}$ for $l\in[d]$.

    Now, it is clear that $\psi(f_{ij,\rho(P)})=\overline{f}_{ij,P}^V=0$ and $\psi(h_{C,k,\rho(P)})=0$. Also, 
    $$\psi(g_{i,l,\rho(P)})=\sum_{k=1}^d \beta_{l,k}(\overline{x}_{1,k}-\overline{x}_{i,k})=-\sum_{k=1}^d \beta_{l,k}(\overline{x}_{i,k}-1)=0.$$
    Hence, $\psi(I_{E,\rho(P)}) = 0$ and so $\overline{\psi}$ is well defined.
    On the other hand, as we can choose $\gamma_{i\rightarrow1}=\{(i,1)\}$ for $i\in[d]\setminus\{1\}$, we have that for all $l\in[d+1-i]$
    $$\varphi(g_{i,l,P}^V)= \sum_{k=1}^d \beta_{l,k} \sum_{(s,t)\in \gamma_{i\rightarrow 1}} \overline y_{st,k} = \sum_{k=1}^d \beta_{l,k} \overline y_{i1,k}=-\overline{g}_{i,l,\varphi(P)}=0,$$
    and hence $g_{i,l \rho(P)} \in I_{E,\rho(P)}$.
    Also, for any $\{ij\}\in E(G)$ with $\gamma_{i\to j}:=\gamma_{i\to 1}\cup(-\gamma_{j\to 1})$, we have
      \begin{align}
      \begin{split}
      \label{eq:EdgeDifference}
    \varphi\big(\overline{x}_{i,k} - \overline{x}_{j,k}\big)
      &= \sum_{(s,t) \in \gamma_{i\to 1}}\overline{y}_{st,k} - \sum_{(s,t) \in \gamma_{j\to 1}}\overline{y}_{st,k} = \sum_{(s,t) \in \gamma_{i\to j}}\overline{y}_{st,k} \\
      &=  - \overline{y}_{ji,k} + \underbrace{\overline{y}_{ji,k} + \sum_{(s,t) \in \gamma_{i\to j}}\overline{y}_{st,k} }_{=0}
      = \overline{y}_{ij,k},
      \end{split}
    \end{align}
    from which we can deduce that $\varphi(f_{ij,P}^V)=\overline{f}_{ij,\rho(P)}$. Hence, $\varphi(I_{V,P})=0$ and $\overline{\varphi}$ is well defined.

    Finally, \cref{eq:EdgeDifference} implies that $\overline\varphi\circ\overline\psi(\overline{y}_{ij,k})=\overline{y}_{ij,k}$ and (combined with $\overline{x}_{1,k}=1$)
    \begin{align*}
        \overline\psi\circ\overline\varphi(\overline{x}_{i,k}) &= \overline\psi\circ\overline\varphi(\overline{x}_{1,k} + \overline{x}_{i,k} - \overline{x}_{1,k})
    = \overline\psi\Big(1+ \sum_{(s,t)\in \gamma_{i\rightarrow 1}} \overline{y}_{st,k}\Big)\\
      &= 1+\sum_{(s,t) \in \gamma_{i\to 1}}\big(\overline{x}_{s,k}-\overline{x}_{t,k}\big)
    = 1+\overline{x}_{i,k}-\overline{x}_{1,k} = \overline{x}_{i,k}. \qedhere
    \end{align*}
\end{proof}

In the case where $d=2$, we can simplify \cref{lem:realisationNumberGeneralEdgeVariables} to the following.

\begin{lemma}
  \label{lem:realisationNumberTwoDimensionalEdgeVariables}
  Let $G$ be a minimally $2$-rigid with $n \geq 3$ and let $\CC[a,c][y^\pm]$ be as in \cref{def:polynomialSystemEdgeVariables}.  Let $I_E'\subseteq \CC[a,c][y^\pm]$ be the ideal generated by
  \begin{equation}
    \label{eq:twoDimensionalSystemEdgeVariables}
    \begin{aligned}
      f_{ij}'&\coloneqq y_{ij,1}\cdot y_{ij,2} - a_{ij} &&\text{for } \{ij\} \in E(G) \\
      g_{12,1}'&\coloneqq c_{1,1}y_{12,1} + c_{1,2}y_{12,2}, \\
      h_{C,k}'&\coloneqq \sum_{(s,t)\in C} y_{st,k} &&\text{for each directed cycle $C$ of } G \text{ and } k\in [2].
    \end{aligned}
  \end{equation}
  Then the generic root count $\ell_{I_{E}',\CC(a,c)}$ is equal to $4 c_2(G)$.
\end{lemma}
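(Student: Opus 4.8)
The plan is to deduce this from \cref{lem:realisationNumberGeneralEdgeVariables} using the fact that over $\CC$ a sum of two squares splits as a product of linear forms: $y_1^2+y_2^2=(y_1+\sqrt{-1}\,y_2)(y_1-\sqrt{-1}\,y_2)$. Performing the corresponding linear change of variables on each edge should transform the $d=2$ instance of the system in \cref{def:polynomialSystemEdgeVariables} into the system \eqref{eq:twoDimensionalSystemEdgeVariables}, at the cost of an invertible linear substitution in the pinning parameters $c_{1,1},c_{1,2}$; since such substitutions preserve generic root counts and \cref{lem:realisationNumberGeneralEdgeVariables} gives $\ell_{I_E,\CC(a,c)}=2^2c_2(G)=4c_2(G)$, the result follows.

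First I would move to polynomial rings. Let $J_E,J_E'\subseteq\CC[a,c][y]$ be generated, respectively, by the generators of $I_E$ with $d=2$ (see \eqref{eq:generalSystemEdgeVariables}) and by the generators \eqref{eq:twoDimensionalSystemEdgeVariables} of $I_E'$; the two lists agree except that $f_{ij}=y_{ij,1}^2+y_{ij,2}^2-a_{ij}$ is replaced by $f'_{ij}=y_{ij,1}y_{ij,2}-a_{ij}$. Let $\Phi$ be the $\CC$-algebra automorphism of $\CC[a,c][y]$ fixing each $a_{ij}$, acting on $(c_{1,1},c_{1,2})$ by $c_{1,1}\mapsto\tfrac12(c_{1,1}-\sqrt{-1}\,c_{1,2})$, $c_{1,2}\mapsto\tfrac12(c_{1,1}+\sqrt{-1}\,c_{1,2})$, and on each pair $(y_{ij,1},y_{ij,2})$ by $y_{ij,1}\mapsto y_{ij,1}+\sqrt{-1}\,y_{ij,2}$, $y_{ij,2}\mapsto y_{ij,1}-\sqrt{-1}\,y_{ij,2}$; both blocks are invertible, and $\Phi$ preserves the subring $\CC[a,c]$. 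A short check on generators gives $\Phi(f'_{ij})=f_{ij}$, $\Phi(g'_{12,1})=g_{2,1}$, and $\Phi(\langle h'_{C,1},h'_{C,2}\rangle)=\langle h_{C,1},h_{C,2}\rangle$ for every directed cycle $C$, so $\Phi(J_E')=J_E$. Since $\Phi$ restricts to an automorphism of $\CC[a,c]$, it extends to a ring isomorphism $\CC(a,c)[y]\to\CC(a,c)[y]$, semilinear over the induced field automorphism of $\CC(a,c)$, that carries $J'_{E,\CC(a,c)}$ to $J_{E,\CC(a,c)}$; a semilinear isomorphism preserves $\CC(a,c)$-dimension, so $\ell_{J'_E,\CC(a,c)}=\ell_{J_E,\CC(a,c)}$.

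It remains to compare polynomial and Laurent root counts. For $J_E'$ this is automatic: on $V(J'_{E,P})$ one has $y_{ij,1}y_{ij,2}=a_{ij}$, a unit for generic $P$, so every solution lies in the torus, whence $\ell_{I'_E,\CC(a,c)}=\ell_{J'_E,\CC(a,c)}$. For $J_E$ one needs $\ell_{I_E,\CC(a,c)}=\ell_{J_E,\CC(a,c)}$, i.e.\ that for generic parameters no solution of the $d=2$ system has a vanishing coordinate. Through (the polynomial-ring version of) the isomorphism in the proof of \cref{lem:realisationNumberGeneralEdgeVariables}, a solution of $J_{E,P}$ corresponds to a realisation of $G$ with the edge $12$ pinned and squared edge lengths $(a_{ij})$, and the vanishing of a coordinate $y_{ij,k}$ means the edge $ij$ is parallel to a coordinate axis. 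The space of such pinned realisations is an irreducible affine variety of dimension $|E(G)|$, the squared-edge-length map to $\CC^{|E(G)|}$ is dominant and generically finite because $G$ is minimally $2$-rigid, and for each edge the locus where that edge is axis-parallel is a proper closed subset; hence its image is a proper closed subset, so for generic $(a,c)$ no solution of $J_{E,(a,c)}$ lies on a coordinate hyperplane (one also uses, as in \cref{lem:realisationNumberGeneralVertexVariables}, that the generic fibre is reduced by generic smoothness in characteristic zero). Combining, $\ell_{I'_E,\CC(a,c)}=\ell_{J'_E,\CC(a,c)}=\ell_{J_E,\CC(a,c)}=\ell_{I_E,\CC(a,c)}=4c_2(G)$.

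The change of variables and the fact that $V(J'_E)$ sits in the torus are routine; the step I expect to require the most care is the reconciliation $\ell_{I_E,\CC(a,c)}=\ell_{J_E,\CC(a,c)}$ — that a generic congruent realisation has no axis-parallel edge — which, if preferred, can instead be obtained by re-running the arguments of \cref{lem:realisationNumberGeneralVertexVariables,lem:realisationNumberGeneralEdgeVariables} over affine space rather than the torus.
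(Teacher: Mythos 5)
Your proposal is correct and follows essentially the same route as the paper: its proof of \cref{lem:realisationNumberTwoDimensionalEdgeVariables} consists exactly of the substitution $y_{ij,1}\mapsto y_{ij,1}+\mathfrak{i}\cdot y_{ij,2}$, $y_{ij,2}\mapsto y_{ij,1}-\mathfrak{i}\cdot y_{ij,2}$ together with the corresponding invertible change of the parameters $c_{1,k}$, reducing to \cref{lem:realisationNumberGeneralEdgeVariables}. Your extra discussion of polynomial versus Laurent root counts (checking that for generic parameters the solutions of either system avoid the coordinate hyperplanes, so the affine and torus counts agree) is a careful justification of a subtlety that the paper's one-line proof treats as immediate, rather than a genuinely different argument.
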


\begin{proof}
    The $f_{ij}$ in System~\eqref{eq:generalSystemEdgeVariables} can be transformed into the $f_{ij}'$ in System~\eqref{eq:twoDimensionalSystemEdgeVariables} by the following change of variables
  \begin{equation*}
    y_{ij,1}\mapsto y_{ij,1}+\mathfrak{i}\cdot y_{ij,2},\qquad y_{ij,2}\mapsto y_{ij,1}-\mathfrak{i}\cdot y_{ij,2}.
  \end{equation*}
  The polynomial $g_{12,1}$ is mapped to $g_{12,1}'$ up to a suitable change of the parameters $c_{1,k}$. Moreover, the image of the $h_{C,k}$ polynomials generate the same ideal as the $h_{C,k}'$ polynomials.  Consequently, their generic root counts coincide.
\end{proof}

Next, the generic root count from \cref{lem:realisationNumberTwoDimensionalEdgeVariables} can be reformulated.

\begin{lemma}
  \label{lem:SIAGA2}
  Let $G$ be minimally $2$-rigid with $n \geq 3$ and consider the modified parametrised Laurent polynomial ring
  \begin{equation*}
    \CC\Big[a_{ij}\mid \{ij\} \in E(G), i<j \Big]\Big[c_{1,k}\mid k\in[2]\Big]\Big[y_{ij}^\pm\mid \{ij\} \in E(G), i<j \Big],
  \end{equation*}
  with parameters $a_{ij}, c_{1,k}$ and variables $y_{ij}$.  Let $I_E''$ be the parametrised polynomial ideal generated by:
  \begin{align*}
    \label{eq:twoDimensionalModification2}
    h_{C,1}'' &\coloneqq \sum_{(i,j)\in C}y_{ij} &&\text{for each directed cycle $C$ of $G$},\\
    h_{C,2}'' &\coloneqq \sum_{(i,j)\in C}a_{ij}y_{ij}^{-1} &&\text{for each directed cycle $C$ of $G$},\\
    g_{12}'' &\coloneqq c_{1,1}y_{12}^2+c_{1,2}.
  \end{align*}
  Then $\ell_{I_E'',\CC(a,c)}=4c_2(G)$.
\end{lemma}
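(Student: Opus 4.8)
The plan is to deduce the statement from \Cref{lem:realisationNumberTwoDimensionalEdgeVariables}, which already identifies $\ell_{I_E',\CC(a,c)}$ with $4c_2(G)$, by showing that $I_E''$ has the same generic root count as $I_E'$. The mechanism is the elimination of the variables $y_{ij,2}$ using the edge-length relations: in the Laurent setting the equation $f_{ij}' = y_{ij,1}y_{ij,2} - a_{ij} = 0$ is equivalent to $y_{ij,2} = a_{ij}y_{ij,1}^{-1}$, since both $y_{ij,1}$ and the parameter $a_{ij}$ are units.

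First I would pass to the generic specialisation and work over the field $\CC(a,c)$, adopting the conventions $y_{ji} := -y_{ij}$ and $a_{ji} := a_{ij}$ for $i<j$. Define a ring homomorphism $\phi\colon \CC(a,c)[y_{ij,1}^\pm, y_{ij,2}^\pm : ij\in E(G)] \to \CC(a,c)[y_{ij}^\pm : ij\in E(G)]$ by $y_{ij,1}\mapsto y_{ij}$ and $y_{ij,2}\mapsto a_{ij}y_{ij}^{-1}$. Then $\phi$ is surjective, and its kernel is exactly $\langle f_{ij}' : ij\in E(G)\rangle$: for a single edge this is the elementary fact that $\mathbb{F}[u^\pm,v^\pm]/(uv-a)\cong\mathbb{F}[u^\pm]$ whenever $a\in\mathbb{F}^\times$, and the general case follows by tensoring over $\CC(a,c)$. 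As the $f_{ij}'$ are among the generators of $I_{E,\CC(a,c)}'$, it follows that $\phi$ induces an isomorphism $\CC(a,c)[y_{ij,1}^\pm,y_{ij,2}^\pm]/I_{E,\CC(a,c)}' \xrightarrow{\ \sim\ } \CC(a,c)[y_{ij}^\pm]/\phi(I_{E,\CC(a,c)}')$.

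Next I would chase the remaining generators through $\phi$. One checks $\phi(h_{C,1}') = \sum_{(i,j)\in C}y_{ij} = h_{C,1}''$ and $\phi(h_{C,2}') = \sum_{(i,j)\in C}a_{ij}y_{ij}^{-1} = h_{C,2}''$, while $\phi(g_{12,1}') = c_{1,1}y_{12}+c_{1,2}a_{12}y_{12}^{-1}$, which generates the same ideal as $c_{1,1}y_{12}^2 + c_{1,2}a_{12}$, and hence the same ideal as $(c_{1,1}/a_{12})y_{12}^2 + c_{1,2}$. Therefore $\phi(I_{E,\CC(a,c)}')$ is carried onto $I_{E,\CC(a,c)}''$ by the automorphism of $\CC(a,c)[y^\pm]$ that fixes all $y$-variables, fixes every $a_{ij}$ and $c_{1,2}$, and sends $c_{1,1}\mapsto a_{12}c_{1,1}$; as this automorphism is semilinear over a field automorphism of $\CC(a,c)$, it sends a $\CC(a,c)$-basis of one quotient to a $\CC(a,c)$-basis of the other and so preserves the quotient's dimension as a $\CC(a,c)$-vector space. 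Hence $\ell_{I_E'',\CC(a,c)} = \ell_{I_E',\CC(a,c)} = 4c_2(G)$.

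The computation is essentially routine; the only point needing a little care is the bookkeeping of the parameter $c_{1,1}$ acquiring the factor $a_{12}$ under the substitution. This is harmless, being an invertible reparametrisation of parameter space of exactly the kind already used in the proof of \Cref{lem:realisationNumberTwoDimensionalEdgeVariables}, but the final comparison should be phrased as an equality of $\CC(a,c)$-dimensions of genuinely isomorphic rings rather than as an informal identification.
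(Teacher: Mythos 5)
Your proposal is correct and follows essentially the same route as the paper's proof: eliminate $y_{ij,2}$ via the substitution $y_{ij,2}\mapsto a_{ij}y_{ij,1}^{-1}$ coming from $f_{ij}'$, observe that the $h_{C,k}'$ become the $h_{C,k}''$, and absorb the extra factor $a_{12}$ in the image of $g_{12,1}'$ by an invertible reparametrisation so that the generic root count is unchanged. Your version merely spells out the elimination isomorphism and the semilinear dimension count that the paper leaves implicit.
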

\begin{proof}
    This follows straightforwardly from \cref{lem:realisationNumberTwoDimensionalEdgeVariables}.  The system is a simple reformulation of System~\eqref{eq:twoDimensionalSystemEdgeVariables}, replacing $y_{ij,2}$ by $a_{ij}y_{ij,1}^{-1}$.  The polynomials $h_{C,k}'$ then become $h_{C,k}''$. The polynomial $g_{12,1}'$ becomes $c_{1,1}y_{12}+c_{1,2}a_{12}y_{12}^{-1}$, which we can replace by $g_{12}''$ without changing the generic root count.
\end{proof}

We now deduce the equivalent restatement of \Cref{thm:main}, in which we are able to express the realisation number of a minimally 2-rigid graph $G$ in terms of a tropical intersection product involving the Bergman fan of the graphic matroid of $G$.

\begin{theorem}  \label{thm:twoDimensionalRealisationNumber}
\label{thm:main}
  The 2-realisation number of a minimally 2-rigid graph $G$ with $n \geq 3$ vertices is described by the tropical intersection product
  \[ 2 c_2(G) = (-\Trop(M_G))\cdot \Trop(M_G)\cdot \Trop(y_{12}-1). \]
\end{theorem}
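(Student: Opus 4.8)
The plan is to feed the generic root count from \Cref{lem:SIAGA2} into \Cref{prop:keyprop}. Decompose the ideal of \Cref{lem:SIAGA2} as $I_E'' = I_1 + I_2 + I_3$, where $I_1 = \langle h_{C,1}'' \rangle$ and $I_2 = \langle h_{C,2}'' \rangle$ (both ranging over all directed cycles $C$ of $G$) and $I_3 = \langle g_{12}'' \rangle$, and apply \Cref{prop:keyprop} with the ordering $(I_2, I_3, I_1)$. These three ideals are parametrically independent: assign the parameters $a_{ij}$ to $I_2$, the parameters $c_{1,1}, c_{1,2}$ to $I_3$, and no parameters to $I_1$. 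The ideals $I_2$ and $I_3$ are torus equivariant, since rescaling $y_{ij} \mapsto t_{ij} y_{ij}$ is absorbed by $a_{ij} \mapsto t_{ij} a_{ij}$ in $I_2$ and rescaling $y_{12} \mapsto t_{12} y_{12}$ is absorbed by $c_{1,2} \mapsto t_{12}^2 c_{1,2}$ in $I_3$; note that $I_1$, which carries no parameters, is genuinely \emph{not} torus equivariant, which is precisely why it must occupy the last slot. Finally, as $G$ is connected the linear forms $h_{C,1}''$ span the cycle space of $G$ (and so do the $h_{C,2}''$, once viewed as linear forms in the variables $y_{ij}^{-1}$), which has dimension $|E(G)| - |V(G)| + 1$; hence $\codim I_{1,\CC(a,c)} = \codim I_{2,\CC(a,c)} = |E(G)| - |V(G)| + 1$ and $\codim I_{3,\CC(a,c)} = 1$, so the complementary-dimension hypothesis of \Cref{prop:keyprop} reduces to $2(|E| - |V| + 1) + 1 = |E|$, which is exactly the relation $|E| = 2|V| - 3$ for minimally $2$-rigid graphs.

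Next I would identify the three tropical varieties that appear, all taken over $\CC$ with its trivial valuation. The support-minimal elements of the cycle space of $G$ are precisely the cycles of $G$, so $M(I_1)$ has the same circuits as the graphic matroid, i.e.\ $M(I_1) = M_G$, and by \Cref{lem:bergmanFan} we get $\Trop(I_{1,P}) = \Trop(M_G)$ with multiplicities one. The monomial automorphism $y_{ij} \mapsto y_{ij}^{-1}$ of the torus acts on $\RR^{E(G)}$ by $w \mapsto -w$ and carries $I_{2,P}$ to the linear ideal generated by the forms $\sum_{(i,j) \in C} a_{ij}(P)\, y_{ij}$; since the $a_{ij}(P)$ are nonzero for generic $P$, this ideal again has underlying matroid $M_G$, so $\Trop(I_{2,P}) = -\Trop(M_G)$. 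For the last factor, $\Trop(I_{3,P}) = \Trop\bigl(c_{1,1}(P)\, y_{12}^2 + c_{1,2}(P)\bigr)$ is the hyperplane $\{w \in \RR^{E(G)} : w_{12} = 0\}$, carrying multiplicity equal to the lattice length of the Newton segment $\conv\{0, 2\chi_{\{1,2\}}\}$, namely $2$; thus $\Trop(I_{3,P}) = 2 \cdot \Trop(y_{12} - 1)$.

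Assembling these via \Cref{prop:keyprop}, for generic $P$ one obtains
\[
4 c_2(G) = \ell_{I_E'', \CC(a,c)} = \Trop(I_{2,P}) \cdot \Trop(I_{3,P}) \cdot \Trop(I_{1,P}) = 2 \bigl( (-\Trop(M_G)) \cdot \Trop(M_G) \cdot \Trop(y_{12} - 1) \bigr),
\]
using commutativity of the tropical intersection product and its linearity in the multiplicity of each factor. Dividing both sides by $2$ yields the claimed identity.

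The step I expect to be the main obstacle is the joint verification of the hypotheses of \Cref{prop:keyprop}: torus equivariance, parametric independence, and the complementary-dimension count must all hold simultaneously, which is what forces both the particular ordering $(I_2, I_3, I_1)$ and the particular partition of parameters, and one needs to confirm that these requirements do not clash. A secondary delicate point is keeping track of the factor $2$: it enters exactly once on the tropical side (the lattice length of $\conv\{0, 2\chi_{\{1,2\}}\}$ in $\Trop(I_{3,P})$) and exactly once on the realisation side (the $4 = 2^2$ coming from \Cref{lem:SIAGA2}), leaving a single factor $2$ after cancellation. The matroid identifications $M(I_1) = M(I_2) = M_G$ and the reflection $\Trop(I_{2,P}) = -\Trop(M_G)$ are conceptually routine, but should be argued carefully.
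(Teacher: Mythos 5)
Your proposal is correct and follows essentially the same route as the paper's proof: the same decomposition $I_E'' = I_1 + I_2 + I_3$ of the ideal from \Cref{lem:SIAGA2}, the same identifications $\Trop(I_{1,P}) = \Trop(M_G)$, $\Trop(I_{2,P}) = -\Trop(M_G)$ via the inversion $y_{ij}\mapsto y_{ij}^{-1}$, and $\Trop(I_{3,P}) = \Trop(y_{12}^2-1)$ carrying multiplicity $2$, fed into \Cref{prop:keyprop} and combined with the factor $4 = 2^2$ from \Cref{lem:SIAGA2}. The only cosmetic difference is that you make explicit the ordering $(I_2,I_3,I_1)$ placing the parameter-free ideal $I_1$ in the last slot (and absorb the torus action into $c_{1,2}$ rather than $c_{1,1}$), points the paper leaves implicit, but the argument is the same.
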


\begin{proof}
  Consider the ideals from Lemma~\ref{lem:SIAGA2}:
  \begin{align*}
    I_1 &= \langle h_{C,1}''\mid C\subseteq E(G) \text{ directed cycle}\rangle, &
    I_2 &= \langle h_{C,2}''\mid C\subseteq E(G) \text{ directed cycle}\rangle, &
    I_3 &= \langle g_{12}'' \rangle \, .
  \end{align*}
  We first describe their corresponding tropical varieties.

  First consider $I_{1,P}$ for generic $P$.
  By \Cref{lem:bergmanFan}, $\Trop(I_{1,P})$ is equal to $\Trop(M(I_{1,P}))$ where $M(I_{1,P})$ is the matroid whose circuits are the supports of the minimal-support linear polynomials in $I_{1,P}$.
  By construction, these are exactly the cycles of $G$, and hence $M(I_1) = M_G$ (see \Cref{ex:graphic+matroid}).

  Next consider $I_{2,P}$ for generic $P$ and the monomial map $\phi$ that sends $y_{ij} \mapsto y_{ij}^{-1}$.
  Then $\phi(I_{2,P})$ is a linear ideal, and so $\Trop(\phi(I_{2,P}))$ is equal to $\Trop(M_G)$ by the same argument as $\Trop(I_{1,P})$.
  In the notation of~\cite{MaclaganSturmfels15}, the tropicalisation of $\phi$ is the linear map $\Trop(\phi) \colon \RR^n \rightarrow \RR^n$ that sends $v \mapsto -v$.
  As \cite[Corollary 3.6]{MaclaganSturmfels15} states $\Trop(\phi(I_{2,P})) = \Trop(\phi)\Trop(I_{2,P})$, we deduce that $\Trop(I_{2,P})$ is equal to $-\Trop(M_G)$.

  Finally, consider $I_{3,P}$ for generic $P$.
  By \Cref{def:tropical+hypersurface}, we deduce that $\Trop(I_{3,P})$ is the hyperplane defined by $\{y_{12} = 0\}$.
  Moreover, the Newton subdivision $\cN(g_{12}'')$ is the convex line segment between $0$ and $2\cdot\chi_{12}$, hence the unique cell of $\Trop(I_{3,P})$ has multiplicity two.
  We note here that, since $\val(c_{1,1})= \val(c_{1,2}) = 1$,
  the tropical hypersurfaces $\Trop (g_{12}'')$ and $\Trop (y_{12}^2 -1)$ are the same weighted polyhedral complex.

  We next show that $I_1,I_2,I_3$ satisfy the properties of \Cref{prop:keyprop} so that we can write the generic root count as a tropical intersection number.
  As $\codim(\Trop(M_G)) = n-2$ and tropicalisation preserves dimension (see, for example, \cite[Theorem 3.3.5]{MaclaganSturmfels15}), we deduce that $\codim(I_{i,\CC(a,c)}) = n-2$ for $i = 1,2$.
  Moreover, $\codim(I_{3,\CC(a,c)}) = 1$ and hence $\sum_{i=1}^3 \codim(I_{i,\CC(a,c)}) = 2n-3$, the dimension of the ambient space.
  For torus equivariance, we write a choice of parameters as $P = (P_{ij}, P_{1,1}, P_{1,2}) \in \CC^{2n-1}$ where $P_{ij}$ corresponds to a choice of $a_{ij}$, and $P_{1,i}$ to a choice of $c_{1,i}$.
  It is straightforward to check that $I_2$ and $I_3$ are torus equivariant under the respective torus actions $*_2$ and $*_3$:
  \begin{align*}
  *_2 &\colon (\CC^*)^{2n-3} \times \CC^{2n-1} \rightarrow \CC^{2n-1}, (t,P) \mapsto  ((t_{ij}P_{ij}), P_{1,1}, P_{1,2}) \\
  *_3 &\colon (\CC^*)^{2n-3} \times \CC^{2n-1} \rightarrow \CC^{2n-1},
  (t,P) \mapsto  ((P_{ij}), t_{12}^{-2}P_{1,1}, P_{1,2}).
  \end{align*}
  Finally, the ideals are clearly parametrically independent.
  
  By \cref{lem:SIAGA2} and \cref{prop:keyprop}, we have
  \begin{equation*}
      4 c_2(G) = \Trop(I_{1,P})\cdot\Trop(I_{2,P})\cdot \Trop(I_{3,P}) = \Trop(M_G)\cdot(-\Trop(M_G))\cdot \Trop(y_{12}^2-1)
  \end{equation*}
  for generic $P \in \CC^{2n-1}$.
  The statement then follows from the fact that $\Trop(y_{12}^2-1)$ and $\Trop(y_{12}-1)$ coincide set-theoretically, but the one cell of $\Trop(y_{12}^2-1)$ is of multiplicity $2$, whereas the cell of $\Trop(y_{12}-1)$ has multiplicity $1$.
\end{proof}


\section{Realisation numbers via matroid intersection theory}
\label{sec:realisations_via_matroid_intersection}

In this section, we derive a combinatorial characterisation of $c_2(G)$ via the tropical intersection product in \Cref{thm:main}.
This is given by enumerating so-called \emph{intersection trees} between pairs of flats satisfying certain conditions.

To define our combinatorial characterisation, we will utilise a number of techniques from \cite{AEP:23} for intersecting Bergman fans and their negatives, but adapted to our setup.
We will begin by recalling some results for general loopless matroids, then restrict to our specific case of graphic matroids of minimally 2-rigid graphs when required.

\subsection{Intersecting arboreal pairs of loopless matroids}

Let $M$ be a loopless matroid on ground set $E$.
Recall from \eqref{eq:bergman+cone} that the cones of the Bergman fan $\Trop(M)$ are $\sigma_\cF:=  \cone(\Chi_{F_1}, \dots, \Chi_{F_s}) + \RR \cdot \Chi_E$ where $\cF = (F_1, \dots, F_s) \in \Delta(M)$ is a proper chain of flats of $M$.
We can always extend the proper chain of flats $\cF$ to a non-proper chain $(F_0, F_1, \dots, F_{s+1})$ where $F_0 := \emptyset$ and $F_{s+1} := E$.
We define the \emph{reduced flats} of $\cF$ to be the sets $\widetilde{F}_i := F_i \setminus F_{i-1}$ where $1 \leq i \leq s+1$.
Note that the reduced flats $(\widetilde{F}_1, \dots, \widetilde{F}_s, \widetilde{F}_{s+1})$ always give a set partition of $E$ into $r(M)$ pieces: this observation will allow us to utilise the tools of \cite{AEP:23}.

Given the proper chain of flats $\cF = (F_1, \dots, F_s) \in \Delta(M)$, we define the linear space
\begin{equation*}
L(\cF) := \{ y \in \RR^E \mid y_i = y_j \: \forall i, j \in \widetilde{F}_k \, , \, 1 \leq k \leq s+1 \} \, .
\end{equation*}
It is immediate that $\sigma_\cF \subseteq L(\cF)$, and that in fact $L(\cF)$ is the linear span of $\sigma_\cF$.
We will first consider how these linear spaces $L(\cF)$ intersect as a precursor to describing how the cones $\sigma_\cF$ intersect.

Let $N$ be another loopless matroid on the same ground set $E$ and let $\cH = (H_1, \dots, H_t) \in \Delta(N)$ a proper chain of flats of $N$.
We define the \emph{intersection graph} $\Gamma_{\cF, \cH}$ to be the bipartite (multi)graph with vertex set indexed by reduced flats and edge set indexed by elements of $E$:
\begin{align*}
V(\Gamma_{\cF, \cH}) &:= \{\widetilde{F}_1, \dots, \widetilde{F}_s, \widetilde{F}_{s+1}\} \sqcup \{\widetilde{H}_1, \dots, \widetilde{H}_t, \widetilde{H}_{t+1}\} \, , \\
E(\Gamma_{\cF, \cH}) &:= \{e(a) := (\widetilde{F}_i, \widetilde{H}_j) \mid a \in \widetilde{F}_i \cap \widetilde{H}_j \} \, .
\end{align*}
As the reduced flats of $\cF$ and $\cH$ give partitions of $E$, there is a one-to-one correspondence between $E$ and the edges of $\Gamma_{\cF, \cH}$.
We write $e(a)$ for the edge of $E(\Gamma_{\cF, \cH})$ labelled by $a$.

We would like to exploit the combinatorics of $\Gamma_{\cF, \cH}$ to determine how the affine spaces $L(\cF) \cap (\alpha - L(\cH))$ intersect for various $\alpha \in \RR^E$.
However, we first note that both of these linear spaces contain the span of the all-ones vector $\Chi_E$.
Hence, if $z$ is contained in their intersection then the line $z + \RR\cdot \Chi_E$ is also contained. 
To cut out this redundancy, we will write $L(\cF)|_{y_\epsilon = 0}$ for the linear space $L(\cF)$ with the additional affine constraint that $y_\epsilon = 0$ for some fixed $\epsilon \in E$.
We will use the same notation when considering cones and Bergman fans with an additional affine constraint.

\begin{lemma}[{\cite[Lemma 2.2]{AEP:23}}] \label{lem:intersection+graph}
    Let $M$ and $N$ be loopless matroids with proper chains of flats $\cF \in \Delta(M)$ and $\cH \in \Delta(N)$.
    Let $\Gamma_{\cF, \cH}$ be their intersection graph. Then the following hold.
    \begin{enumerate}
        \item If $\Gamma_{\cF, \cH}$ has a cycle, then $L(\cF)|_{y_\epsilon = 0} \cap (\alpha - L(\cH)) = \emptyset$ for generic $\alpha \in \RR^E$.
        \item If $\Gamma_{\cF, \cH}$ is disconnected, then $L(\cF)|_{y_\epsilon = 0} \cap (\alpha - L(\cH))$ is not a point for any $\alpha \in \RR^E$.
        \item If $\Gamma_{\cF, \cH}$ is a tree, then $L(\cF)|_{y_\epsilon = 0} \cap (\alpha - L(\cH))$ is a point for any $\alpha \in \RR^E$.
    \end{enumerate}
\end{lemma}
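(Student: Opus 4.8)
The plan is to coordinatise both linear spaces by the constant value a vector takes on each reduced flat, which converts the condition
\[ z\in L(\cF)|_{y_\epsilon=0}\cap\big(\alpha-L(\cH)\big) \]
into a linear system whose coefficient matrix is the (unsigned) vertex--edge incidence matrix of the bipartite graph $\Gamma:=\Gamma_{\cF,\cH}$. Concretely: since $\cF$ and $\cH$ are proper chains, every reduced flat $\widetilde F_1,\dots,\widetilde F_{s+1}$ and $\widetilde H_1,\dots,\widetilde H_{t+1}$ is nonempty, so $\Gamma$ has no isolated vertices and exactly $|E|$ edges. Giving $z\in L(\cF)$ is the same as giving a tuple $(u_i)_i$ with $z_a=u_i$ for $a\in\widetilde F_i$, and giving $\alpha-z\in L(\cH)$ is the same as giving $(v_j)_j$ with $(\alpha-z)_a=v_j$ for $a\in\widetilde H_j$; for $a\in\widetilde F_i\cap\widetilde H_j$, i.e.\ for the edge $e(a)=(\widetilde F_i,\widetilde H_j)$, these force $u_i+v_j=\alpha_a$, and conversely any solution of these equations assembles into such a $z$. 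The constraint $y_\epsilon=0$ becomes $u_{i_0}=0$, where $\widetilde F_{i_0}$ is the reduced flat of $\cF$ containing $\epsilon$. Thus the problem is reduced to describing the solution set of $\{u_i+v_j=\alpha_{e(a)}:e(a)\in E(\Gamma)\}\cup\{u_{i_0}=0\}$.

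Next I would record the two standard facts about the operator $\Phi\colon\RR^{V(\Gamma)}\to\RR^{E(\Gamma)}$, $(u,v)\mapsto(u_i+v_j)_{e(a)=(i,j)}$, both exploiting that $\Gamma$ is bipartite: (i) $\dim\ker\Phi$ equals the number $c$ of connected components of $\Gamma$, because on a connected bipartite graph $u_i+v_j\equiv 0$ along every edge forces all $u_i$ equal to a common scalar $t$ and all $v_j$ equal to $-t$ (a one-line spanning-tree/connectivity argument); and (ii) $\alpha$ lies in the image of $\Phi$ if and only if the alternating sum of the corresponding coordinates of $\alpha$ vanishes around every cycle of $\Gamma$ -- equivalently $\Phi$ is surjective precisely when $\Gamma$ is a forest. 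I would give the short direct proofs of (i) and (ii) (or cite a standard incidence-matrix reference) to keep the argument self-contained; these are exactly the ingredients isolated in \cite[Lemma~2.2]{AEP:23}.

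The three cases then follow. If $\Gamma$ has a cycle, by (ii) the set of $\alpha\in\RR^E$ for which the edge equations are feasible lies in a proper linear subspace -- the alternating sum around a fixed cycle is a \emph{nonzero} linear functional on $\RR^E$, since the edges of the cycle carry distinct ground-set labels (indeed at least four, by bipartiteness) -- so for generic $\alpha$ the system, hence $L(\cF)|_{y_\epsilon=0}\cap(\alpha-L(\cH))$, is empty; this is (1). If $\Gamma$ is disconnected, then whenever the edge equations are feasible their solution set is a coset of $\ker\Phi$, of dimension $c\geq 2$, and intersecting with the single hyperplane $\{u_{i_0}=0\}$ lowers the dimension by at most one, so the solution set is either empty or of dimension $\geq 1$ for every $\alpha$, never a point; this is (2). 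If $\Gamma$ is a tree, then $c=1$ and $\Phi$ is surjective, so the edge equations are solvable for every $\alpha$ with solution set a line (the coset of the one-dimensional $\ker\Phi$, spanned by the vector that is $1$ on all $\widetilde F_i$ and $-1$ on all $\widetilde H_j$); since that spanning vector has nonzero $i_0$-coordinate, adding $\{u_{i_0}=0\}$ cuts the line down to a single point, for every $\alpha$; this is (3). The only genuinely non-formal points are checking in (1) that the feasibility locus is a \emph{proper} subspace -- which is where one uses that a cycle meets $E$ in at least two distinct elements -- and checking in (3) that $y_\epsilon=0$ is not already implied, which is exactly why the explicit description of $\ker\Phi$ from (i) is needed; everything else is bookkeeping in the dictionary between $L(\cF)$, $L(\cH)$ and $\Gamma_{\cF,\cH}$.
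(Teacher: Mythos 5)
Your argument is correct, and it is worth noting that the paper itself gives no proof of this lemma at all: it is imported verbatim as \cite[Lemma 2.2]{AEP:23}, so there is no in-paper argument to compare against. Your reduction -- coordinatising $L(\cF)$ and $\alpha-L(\cH)$ by the constant values $u_i$, $v_j$ on the reduced flats, observing that the reduced flats are nonempty (strict inclusions in a proper chain, plus $\widetilde F_{s+1}=E\setminus F_s\neq\emptyset$), and translating the intersection into the system $u_i+v_j=\alpha_a$ over the edges of $\Gamma_{\cF,\cH}$ together with $u_{i_0}=0$ -- is exactly the natural self-contained route, and the three cases then follow from the standard facts about the bipartite incidence operator $\Phi$ (kernel of dimension equal to the number of components, spanned componentwise by $(1,\dots,1,-1,\dots,-1)$; image cut out by alternating cycle sums, so surjectivity iff forest). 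The transfer of ``point/not a point'' between $(u,v)$-space and $\RR^E$ is legitimate because, with no isolated vertices, $z$ determines $(u,v)$ and vice versa on the solution set, which you implicitly use.

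One small inaccuracy: $\Gamma_{\cF,\cH}$ is a \emph{multi}graph, so a cycle can consist of two parallel edges (this happens whenever $|\widetilde F_i\cap\widetilde H_j|\geq 2$), and your parenthetical claim that every cycle has length at least four is false. It is harmless: even a $2$-cycle yields the functional $\alpha_a-\alpha_b$ with $a\neq b$, and in general the alternating sum around any even closed walk through distinct edge labels has $\pm1$ coefficients on distinct coordinates of $\RR^E$, so the feasibility locus is a proper subspace and case (1) goes through unchanged. With that caveat corrected, the proof is complete.
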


\begin{example}
We will primarily be interested in the intersection graphs arising from flags of flats of graphic matroids.
\Cref{fig:intersection_graph} shows three different graphs $G = (V,E)$ where $|V| = 4$ and their intersection graphs $\Gamma_{\cF, \cH}$ for the chains of flats
\begin{align} \label{eq:chain+of+flats}
\begin{split}
\cF \colon \cl(1) \subsetneq \cl(1,2) \subsetneq \cl(1,2,3) \in \Delta(M_G) \, , \\
\cH \colon \cl(4) \subsetneq \cl(4,3) \subsetneq \cl(4,3,2) \in \Delta(M_G) \, .
\end{split}
\end{align}
As each graph has a spanning tree of size three, the resulting linear spaces $L(\cF)$ and $L(\cH)$ are both 3-dimensional in each example, but the dimension of their ambient space changes as the number of edges of $G$ changes.
\begin{figure}[t]
\includegraphics[width=\textwidth]{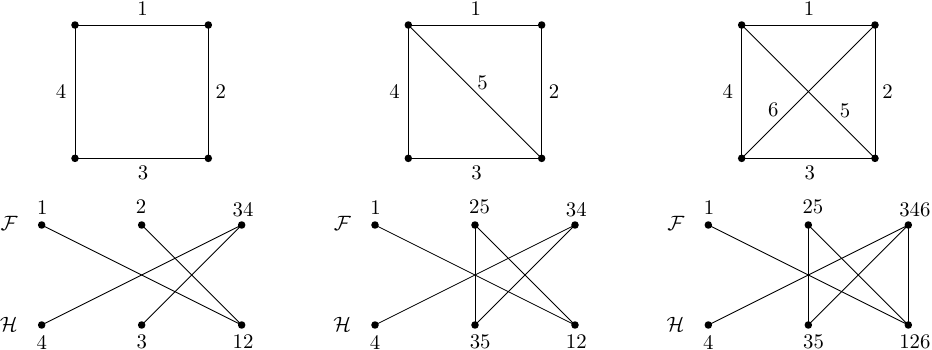}
\caption{A collection of graphs $G$ and their corresponding intersection graphs $\Gamma_{\cF,\cH}$ for the chains of flats $\cF, \cH \in \Delta(M_G)$ defined in \eqref{eq:chain+of+flats}.}
\label{fig:intersection_graph}
\end{figure}

In the first graph, where $G \cong C_4$ is the 4-cycle, the resulting intersection graph is disconnected.
By \Cref{lem:intersection+graph}, the linear spaces $L(\cF)|_{y_\epsilon = 0}$ and $(\alpha - L(\cH))$ do not intersect in a unique point for any generic $\alpha$.
A dimension check shows that $L(\cF)$ and $L(\cH)$ are codimension one subspaces of $\RR^4$, hence $L(\cF)|_{y_\epsilon = 0}$ and $(\alpha - L(\cH))$ generically intersect in a 1-dimensional subspace.

In the second graph, where $G \cong K_4^-$, the resulting intersection graph is a tree.
By \Cref{lem:intersection+graph}, the linear spaces $L(\cF)|_{y_\epsilon = 0}$ and $(\alpha - L(\cH))$ intersect in a unique point for any generic $\alpha$.
A dimension check shows that $L(\cF)$ and $L(\cH)$ are codimension two subspaces of $\RR^5$, hence $L(\cF)|_{y_\epsilon = 0}$ and $(\alpha - L(\cH))$ generically intersect in a 0-dimensional subspace.

In the third graph, where $G \cong K_4$, the resulting intersection graph is spanning but contains a cycle.
By \Cref{lem:intersection+graph}, the linear spaces $L(\cF)|_{y_\epsilon = 0} \cap (\alpha - L(\cH))$ have empty intersection for all generic $\alpha$.
A dimension check shows that $L(\cF)$ and $L(\cH)$ are codimension three linear subspaces of $\RR^6$, hence $L(\cF)|_{y_\epsilon = 0}$ and $(\alpha - L(\cH))$ generically do not intersect.
\end{example}

When $\Gamma_{\cF, \cH}$ is a tree, we call $(\cF, \cH) \in \Delta(M) \times \Delta(N)$ an \emph{arboreal pair}.
In this case, we can go further and use $\Gamma_{\cF, \cH}$ to read off the unique point of intersection of $L(\cF)|_{y_\epsilon = 0} \cap (\alpha - L(\cH))$.
As $y_i = y_j$ when $i, j \in \widetilde{F}_k$ for all $y \in L(\cF)$, we will label the coordinates by their partition part $y_{\widetilde{F}_k}$ for ease, which we do analogously with each $z_{\widetilde{H}_k}$ for $z \in L(\cH)$.
Finally, we write $\widehat{F}$ for the reduced flat of $\cF$ containing the distinguished element $\epsilon \in E$.

\begin{lemma}[{\cite[Lemma 2.3]{AEP:23}}] \label{lem:path}
    Let $(\cF, \cH)$ be an arboreal pair, $\Gamma_{\cF, \cH}$ be their intersection graph and $\alpha \in \RR^E$.
    The unique vectors $y \in L(\cF)$ and $z \in L(\cH)$ such that $y + z = \alpha$ and $y_\epsilon = 0$ are given by
    \begin{equation}\label{eq:alt+sum}
        y_{\widetilde{F}_i} = \alpha_{a_1} - \alpha_{a_2} + \cdots + (-1)^{k+1} \alpha_{a_k}
        \quad \text{and} \quad
        z_{\widetilde{H}_j} = \alpha_{b_1} - \alpha_{b_2} + \cdots + (-1)^{\ell+1} \alpha_{b_\ell}
    \end{equation}
    where $e(a_1)e(a_2)\dots e(a_k)$ is the unique path from $\widetilde{F}_i$ to $\widehat F$ and $e(b_1)e(b_2)\dots e(b_\ell)$ is the unique path from $\widetilde{H}_j$ to $\widehat F$
    for any $i$ and $j$.
\end{lemma}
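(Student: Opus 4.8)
The plan is to check directly that the two explicit vectors defined by \eqref{eq:alt+sum} have all the required properties, and then to invoke uniqueness from \cref{lem:intersection+graph}(3). Concretely, define $y \in \RR^E$ by declaring, for $a$ in the reduced flat $\widetilde F_i$, that $y_a$ equals the alternating sum $\alpha_{a_1} - \alpha_{a_2} + \cdots$ along the unique path $e(a_1)\cdots e(a_k)$ from $\widetilde F_i$ to $\widehat F$ in $\Gamma_{\cF,\cH}$, and define $z \in \RR^E$ analogously using paths to $\widehat F$ from the reduced flats of $\cH$. Since $\Gamma_{\cF,\cH}$ is a tree these paths genuinely exist and are unique, so both vectors are well-defined; as each value depends only on the ambient reduced flat, $y$ is constant on every $\widetilde F_i$ and $z$ on every $\widetilde H_j$, i.e. $y \in L(\cF)$ and $z \in L(\cH)$. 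Moreover $\epsilon \in \widehat F$ and the path from $\widehat F$ to itself is empty, so $y_\epsilon = y_{\widehat F} = 0$.

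The substance of the proof is verifying $y_a + z_a = \alpha_a$ for each $a \in E$. Fix such an $a$ and let $\widetilde F_i, \widetilde H_j$ be the reduced flats containing it, so that $e(a)$ is the edge joining them, and write $e(a_1)\cdots e(a_k)$ and $e(b_1)\cdots e(b_\ell)$ for the paths to $\widehat F$ from $\widetilde F_i$ and $\widetilde H_j$ respectively. Deleting the edge $e(a)$ disconnects the tree into two components, exactly one of which contains $\widehat F$. If that component is the one containing $\widetilde H_j$, then the path from $\widetilde F_i$ must traverse $e(a)$ first, forcing $a_1 = a$ and $(b_1, \dots, b_\ell) = (a_2, \dots, a_k)$; substituting into \eqref{eq:alt+sum} and cancelling the telescoping tail leaves $y_{\widetilde F_i} + z_{\widetilde H_j} = \alpha_{a_1} = \alpha_a$. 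If instead $\widehat F$ lies in the component of $\widetilde F_i$, then symmetrically $b_1 = a$ and $(b_2, \dots, b_\ell) = (a_1, \dots, a_k)$, and the same cancellation gives $y_{\widetilde F_i} + z_{\widetilde H_j} = \alpha_a$; the degenerate case $\widetilde F_i = \widehat F$ (where $k = 0$) is just the sub-instance $\ell = 1$, $b_1 = a$. Since these cases are exhaustive, $y + z = \alpha$, so $(y,z)$ is a point of $L(\cF)|_{y_\epsilon = 0} \cap (\alpha - L(\cH))$, which by \cref{lem:intersection+graph}(3) is a single point; hence $(y,z)$ is exactly the pair claimed.

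I expect no real obstacle here; the only thing demanding care is the bookkeeping of how $e(a)$ sits relative to the distinguished vertex $\widehat F$, and confirming that in each of the two cases the two alternating sums share every term except the one indexed by $a$, which survives with coefficient $+1$. A more structural alternative would be to induct on $|E|$ by stripping a leaf of $\Gamma_{\cF,\cH}$ — necessarily a singleton reduced flat — solving for that coordinate and re-rooting; but this requires tracking how the leaf removal modifies the chains $\cF$ and $\cH$, so the direct telescoping computation above seems both shorter and cleaner.
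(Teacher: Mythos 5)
The paper gives no proof of this lemma at all---it is quoted verbatim from \cite[Lemma 2.3]{AEP:23}---so there is nothing in-text to compare against; judged on its own, your argument is correct and complete. The construction of $y$ and $z$ via alternating sums along paths to $\widehat F$, the observation that membership in $L(\cF)$, $L(\cH)$ and the normalisation $y_\epsilon=0$ are immediate from the definition, and the case analysis on which side of the deleted edge $e(a)$ the root $\widehat F$ lies (forcing $e(a)$ to be the first edge of exactly one of the two paths, after which the two alternating sums telescope to $\alpha_a$) are all sound; the degenerate case $\widetilde F_i=\widehat F$ is correctly absorbed, using that a tree has no parallel edges. Invoking \Cref{lem:intersection+graph}(3) then upgrades the exhibited solution to the unique one, which is exactly what the statement asserts. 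This direct verification is essentially the standard argument one would expect in the cited source, so there is no substantive methodological divergence to report.
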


While \Cref{lem:path} works for any vector $\alpha \in \RR^E$, we will have better combinatorial tools if we restrict to certain (generic) families of vectors.
Identify $E$ with the set $\{1, \dots, m\}$ where $|E| = m$.
We call $\alpha \in \RR^E$ \emph{rapidly increasing} if $\alpha_{i+1} > 3 \alpha_i > 0$ for all $1 \leq i \leq m-1$.
It is easy to verify that if $\alpha$ is rapidly increasing, then $\sum_{j=1}^i \delta_j \alpha_j < \alpha_{i+1} - \sum_{j=1}^i \varepsilon_j \alpha_j$ for all choices of signs $\delta_j, \varepsilon_j \in \{+1, -1\}$.
Observe that this naturally fixes a total order on the set $E$, and hence a total ordering on the edges of any intersection graph $\Gamma_{\cF, \cH}$.

If $\cF, \cH$ are an arboreal pair, each pair of vertices in $\Gamma_{\cF,\cH}$ have a unique path between them that alternates between vertices of $\cF$ and $\cH$.
We write $\widetilde{F}_{i} \rightarrow \widetilde{H}_j$ for the unique (oriented) path from $\widetilde{F}_{i}$ to $\widetilde{H}_j$.
We say that a path is \emph{$\cF$-maximal} if its largest edge $e(a) = (\widetilde{F}_{i}, \widetilde{H}_j)$ with respect to the total order is traversed from $\widetilde{F}_{i}$ to $\widetilde{H}_j$, and \emph{$\cH$-maximal} otherwise.

\begin{proposition}\label{prop:flat+cone+intersect}
    Let $M, N$ be two loopless matroids on $E$ such that $r(M) + r(N) = |E| + 1$ with respective proper chains of flats
    \[
    \cF = (F_1, \dots, F_s) \in \Delta(M) \, , \quad \cH = (H_1, \dots , H_t) \in \Delta(N) \, .
    \]
    Fix an edge $\epsilon \in E$ and choose $\alpha \in\RR^E$ to be generic and rapidly increasing.
    Then $(\sigma_\cF|_{y_{\epsilon} = 0}) \cap (\alpha - \sigma_\cH)$ is non-empty if and only if
    \begin{enumerate}
        \item $\Gamma_{\cF,\cH}$ is a tree,
        \item every path $\widetilde{F}_i \rightarrow \widetilde{F}_j$ is $\cF$-maximal for all $1 \leq i < j \leq s+1$, and
        \item every path $\widetilde{H}_i \rightarrow \widetilde{H}_j$ is $\cH$-maximal for all $1 \leq i < j \leq t+1$.
    \end{enumerate}
    Moreover, if the intersection is non-empty then it contains a single point, and $\cF$ and $\cH$ are maximal.
\end{proposition}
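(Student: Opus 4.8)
The plan is to reduce the claim to two ingredients. The first is a local description of $\sigma_\cF$ inside its span $L(\cF)$: writing $y \in L(\cF)$ in the coordinates $y_{\widetilde F_1}, \dots, y_{\widetilde F_{s+1}}$ given by its (constant) values on the reduced flats, one has $y \in \sigma_\cF$ if and only if $y_{\widetilde F_1} \geq y_{\widetilde F_2} \geq \cdots \geq y_{\widetilde F_{s+1}}$; this is immediate from $\Chi_{F_i} = \Chi_{\widetilde F_1} + \cdots + \Chi_{\widetilde F_i}$ and $\Chi_E = \Chi_{\widetilde F_1} + \cdots + \Chi_{\widetilde F_{s+1}}$, exactly as in \Cref{ex:bergman}, and the symmetric statement holds for $\alpha - \sigma_\cH$ in the coordinates of $z := \alpha - y$. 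The second ingredient is \Cref{lem:path}, which (when $\Gamma_{\cF,\cH}$ is a tree) identifies $L(\cF)|_{y_\epsilon=0} \cap (\alpha - L(\cH))$ with the single explicit point $p$ whose coordinates are the alternating path-sums \eqref{eq:alt+sum}. Together these reduce the whole proposition to computing the signs of the differences $p_{\widetilde F_i} - p_{\widetilde F_j}$ and of the corresponding differences for $\alpha - p$.

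For the direction ``non-empty $\Rightarrow$ (1)--(3)'', I would first note that $(\sigma_\cF|_{y_\epsilon=0}) \cap (\alpha - \sigma_\cH) \neq \emptyset$ forces $L(\cF)|_{y_\epsilon=0} \cap (\alpha - L(\cH)) \neq \emptyset$, so by \Cref{lem:intersection+graph}(1) (applicable since $\alpha$ is generic) the graph $\Gamma_{\cF,\cH}$ has no cycle. Now count: $\Gamma_{\cF,\cH}$ has $|E|$ edges and $s+t+2$ vertices, and $s \leq r(M)-1$, $t \leq r(N)-1$ together with $r(M)+r(N) = |E|+1$ give $|E| \geq s+t+1 = |V(\Gamma_{\cF,\cH})|-1$; an acyclic graph with at least $|V|-1$ edges is a spanning tree, so (1) holds and moreover $s = r(M)-1$, $t = r(N)-1$, i.e.\ $\cF$ and $\cH$ are maximal. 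By \Cref{lem:intersection+graph}(3) and \Cref{lem:path}, $L(\cF)|_{y_\epsilon=0} \cap (\alpha - L(\cH)) = \{p\}$, so the cone intersection, being non-empty and contained in $\{p\}$, equals $\{p\}$; this already settles the ``moreover'' clause. In particular $p \in \sigma_\cF$ and $\alpha - p \in \sigma_\cH$, and by the local description this means the reduced-flat coordinates of $p$ (resp.\ of $\alpha - p$) are weakly decreasing along $\cF$ (resp.\ $\cH$).

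The remaining, and most delicate, step is to translate ``$p_{\widetilde F_1} \geq \cdots \geq p_{\widetilde F_{s+1}}$'' into condition (2), and symmetrically for (3), using \eqref{eq:alt+sum} and the rapid growth of $\alpha$. Here I would fix $i<j$ and compare the tree-paths $\widetilde F_i \to \widehat F$ and $\widetilde F_j \to \widehat F$: since $\Gamma_{\cF,\cH}$ is bipartite, both have even length, so their common tail from the meeting vertex to $\widehat F$ enters $p_{\widetilde F_i}$ and $p_{\widetilde F_j}$ with the same sign and cancels in the difference. A short computation then gives $p_{\widetilde F_i} - p_{\widetilde F_j} = \alpha_{a_1} - \alpha_{a_2} + \cdots$, the alternating sum of the $\alpha$-values along the oriented path $\widetilde F_i \to \widetilde F_j$ with its first edge carrying $+$. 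Since $\alpha$ is rapidly increasing this sum is nonzero (so the weak inequalities defining $\sigma_\cF$ are in fact strict) and has the sign of its largest-index term; that term is positive exactly when the maximal edge of the path occurs in an odd position, equivalently when it is traversed from its $\cF$-endpoint to its $\cH$-endpoint, which is precisely the definition of the path $\widetilde F_i \to \widetilde F_j$ being $\cF$-maximal. Hence $p_{\widetilde F_i} > p_{\widetilde F_j}$ for all $i<j$ if and only if (2) holds, and likewise $\alpha - p$ decreases along $\cH$ if and only if (3) holds; combining this with the previous paragraph proves both implications (and, via \Cref{lem:path}, produces the unique point of intersection for the converse direction). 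The parity bookkeeping in this last step — lining up ``$\cF$-maximal'' with ``odd position'' with ``positive leading coefficient'' — is where essentially all the care is needed; everything else is either a one-line computation or a direct appeal to \Cref{lem:intersection+graph} and \Cref{lem:path}.
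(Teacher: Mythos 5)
Your proposal is correct and follows essentially the same route as the paper's proof: the vertex--edge count combined with \Cref{lem:intersection+graph} forces $\Gamma_{\cF,\cH}$ to be a tree and $\cF,\cH$ maximal, and then \Cref{lem:path} together with the cancellation of the common tail and the rapidly increasing property of $\alpha$ identifies the sign of $p_{\widetilde F_i}-p_{\widetilde F_j}$ with $\cF$-maximality of the path (and symmetrically for $\cH$). The only difference is presentational: you spell out the description of $\sigma_\cF$ via weakly decreasing reduced-flat coordinates, which the paper uses implicitly.
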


\begin{proof}
    Observe that the intersection graph $\Gamma_{\cF, \cH}$ has $s+t+2$ vertices and $|E|$ edges,
    where $s+ 1 \leq r(M)$ and $t +1 \leq r(N)$.
    As $r(M) + r(N) = |E| + 1$, it follows that $\Gamma_{\cF, \cH}$ has at most $|E| + 1$ vertices with equality if and only if $\cF$ and $\cH$ are both maximal.
    If $\Gamma_{\cF, \cH}$ has less that $|E|+1$ vertices, it must contain a cycle and hence $L(\cF)|_{y_\epsilon = 0} \cap (\alpha - L(\cH)) = \emptyset$ by \Cref{lem:intersection+graph}.
    If $\Gamma_{\cF, \cH}$ has exactly $|E|+1$ vertices, it either contains a cycle or is a tree.
    By \Cref{lem:intersection+graph}, the intersection $L(\cF)|_{y_\epsilon = 0} \cap (\alpha - L(\cH))$ is empty in the former case and a single point in the latter.
    This shows that $\Gamma_{\cF, \cH}$ must be a tree, and thus $\cF$ and $\cH$ must be maximal, to give a non-empty intersection.

    It remains to show that the unique points $y \in L(\cF)|_{y_\epsilon = 0}$ and $z \in L(\cH)$ such that $y + z = \alpha$ are contained in $\sigma_\cF|_{y_\epsilon = 0}$ and $\sigma_\cH$ respectively.
    This is equivalent to requiring $y_{\tF_i} - y_{\tF_j} \geq 0$ for all $1 \leq i < j \leq s+1$ and $z_{\tH_i} - z_{\tH_j} \geq 0$ for all $1 \leq i < j \leq t+1$.
    We now show that $y_{\tF_i} - y_{\tF_j} \geq 0$ is equivalent to the path $\tF_i \rightarrow \tF_j$ being $\cF$-maximal.
    The proof that $z_{\tH_i} - z_{\tH_j} \geq 0$ is equivalent to $\tH_i \rightarrow \tH_j$ is $\cH$-maximal is identical.

    We first show that $y_{\tF_i} - y_{\tF_j}$ is equal to the alternating sum
    \begin{align}\label{eq:alt+path}
    y_{\tF_i} - y_{\tF_j} = \alpha_{a_1} - \alpha_{a_2} + \cdots - \alpha_{a_k} \, ,
    \end{align}
    where $e(a_1), \dots, e(a_k)$ is the unique path $\widetilde{F}_i \rightarrow \widetilde{F}_j$ in $\Gamma_{\cF,\cH}$.
    To see this, let $\widehat F$ be the reduced flat containing $\epsilon$, let $e(b_1), \dots, e(b_\ell)$ be the path from $\tF_i$ to $\widehat F$ and $e(c_1), \dots, e(c_n)$ be the path from $\tF_j$ to $\widehat F$.
    As $\Gamma_{\cF,\cH}$ contains no cycles, there exists some $w \in \ZZ_{\geq 0}$ such that the last $w$ edges in these paths are equal, i.e. $e(b_{\ell - w' +1}) = e(c_{n - w' + 1})$ align for all $0 \leq w' \leq w$, and overlap nowhere else.
    Moreover, these paths to $\widehat{F}$ have the same parity, and so the last $w$ terms in \eqref{eq:alt+sum} have the same signs.
    As such, we have from \Cref{lem:path}
    \begin{align*}
    y_{\tF_i} - y_{\tF_j} &= (\alpha_{b_1} - \alpha_{b_2} + \cdots \pm \alpha_{b_\ell}) - (\alpha_{c_1} - \alpha_{c_2} + \cdots \pm \alpha_{c_n}) \\
    &= (\alpha_{b_1} - \alpha_{b_2} + \cdots \pm \alpha_{b_{\ell - w}}) - (\alpha_{c_1} - \alpha_{c_2} + \cdots \pm \alpha_{c_{n-w}}) \\
    &= \alpha_{b_1} - \alpha_{b_2} + \cdots \pm \alpha_{b_{\ell - w}} \mp \alpha_{c_{n-w}} \pm \cdots +\alpha_{c_2} - \alpha_{c_1} \, ,
    \end{align*}
    where $e(b_1), \dots,e(b_{\ell-w}),e(c_{n-w}), \dots, e(c_1)$ is a path from $\tF_i$ to $\tF_j$.
    As $\Gamma_{\cF, \cH}$ a tree, this is the unique path and hence equal to $e(a_1), \dots, e(a_k)$.

    As $\alpha$ is rapidly increasing, \eqref{eq:alt+path} is positive if and only if the largest $\alpha_{a_i}$ has a positive sign, which occurs if and only if the largest edge $e(a_i)$ is traversed from $\cF$ to $\cH$.
    This is precisely the condition of being $\cF$-maximal.
\end{proof}

When $\Gamma_{\cF,\cH}$ satisfies the three properties of \Cref{prop:flat+cone+intersect}, we call $(\cF, \cH) \in \Delta(M) \times \Delta(N)$ an \emph{intersecting arboreal pair} of $M$ and $N$ (with respect to $\alpha$).
We introduce this terminology as \Cref{prop:flat+cone+intersect} shows intersecting arboreal pairs combinatorially characterise the cones in which the fans $\Trop(M)|_{y_{\epsilon} = 0}$ and $\alpha - \Trop(N)$ intersect.

Observe that intersecting arboreal pairs are heavily dependent on the choice of $\alpha$ and the ordering it induces on $E$.
However, we will only be interested in the number of intersecting arboreal pairs for any choice of rapidly increasing generic $\alpha$.
The next lemma shows that this number does not depend on the choice of $\alpha$.

\begin{proposition}\label{prop:intersecting+arboreal+pair+invariant}
Let $M, N$ be loopless matroids on $E$ such that $r(M) + r(N) = |E| + 1$, let $\alpha \in \RR^E$ be generic and rapidly increasing, and let $\epsilon \in E$.
The number of intersecting arboreal pairs of $M$ and $N$ is equal to the tropical intersection product
\[
(-\Trop(N)) \cdot \Trop(M) \cdot \Trop(y_\epsilon - 1) \, .
\]
In particular, the number of intersecting arboreal pairs of $M$ and $N$ is independent of the choice of generic and rapidly increasing $\alpha \in \RR^E$.
\end{proposition}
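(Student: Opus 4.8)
The plan is to unwind the tropical intersection product on the right-hand side into an honest, transverse set-theoretic intersection, and then match its points with intersecting arboreal pairs via \Cref{prop:flat+cone+intersect}. First I would record that $\Trop(y_\epsilon-1)$ is, as a weighted polyhedral complex, the hyperplane $\{w\in\RR^E\mid w_\epsilon=0\}$ with multiplicity one, since its Newton polytope is the segment $\conv(0,\chi_\epsilon)$, which has no interior lattice points. Because the Bergman fan $\Trop(M)$ is invariant under translation by $\chi_E$, and $\chi_E$ has $\epsilon$-th coordinate $1$ so is not contained in that hyperplane, the stable intersection $\Trop(M)\wedge\Trop(y_\epsilon-1)$ is exactly the slice $\Trop(M)|_{y_\epsilon=0}$, with every maximal cone of multiplicity one: the only maximal cone $\sigma_\cF$ of $\Trop(M)$ containing a given facet $\rho=\sigma_\cF|_{y_\epsilon=0}$ is $\sigma_\cF$ itself (as $\sigma_\cF\supseteq\rho+\RR\chi_E$), and $[\ZZ^E:N_{\sigma_\cF}+\{y:y_\epsilon=0\}]=1$ because $\chi_E\in N_{\sigma_\cF}$. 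Using associativity of stable intersection (\Cref{rem:stableIntersection}) and translation invariance of the intersection product (\Cref{lem:tropicalIntersectionProductTranslationInvariant}), it then suffices to show that for a generic translate $\alpha\in\RR^E$ the intersection $\Trop(M)|_{y_\epsilon=0}\cap(\alpha-\Trop(N))$ is transverse, consists of exactly one point per intersecting arboreal pair of $M$ and $N$, and that each such point contributes multiplicity one.

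Next I would take $\alpha$ to be both generic and rapidly increasing; this is legitimate because the rapidly increasing vectors form a full-dimensional cone, while the finitely many transversality conditions we need (no intersection point lies on a cone associated with a non-maximal chain, and wherever $\sigma_\cF|_{y_\epsilon=0}$ meets a translate $\alpha-\sigma_\cH$ the spans of these cones together with $\{y_\epsilon=0\}$ span $\RR^E$) each fail only on a proper subvariety of the space of translates. Writing $\Trop(M)|_{y_\epsilon=0}=\bigcup_{\cF\in\Delta(M)}\sigma_\cF|_{y_\epsilon=0}$ and $\Trop(N)=\bigcup_{\cH\in\Delta(N)}\sigma_\cH$ and distributing the intersection over these unions, \Cref{prop:flat+cone+intersect} tells us that each piece $(\sigma_\cF|_{y_\epsilon=0})\cap(\alpha-\sigma_\cH)$ is empty unless $(\cF,\cH)$ is an intersecting arboreal pair, in which case it is a single point and $\cF,\cH$ are maximal. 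Genericity of $\alpha$ then forces this point into the relative interiors of $\sigma_\cF|_{y_\epsilon=0}$ and $\alpha-\sigma_\cH$, and forces distinct intersecting arboreal pairs to give distinct points (a shared point would have to lie on a cone of a non-maximal chain). Hence the points of $\Trop(M)|_{y_\epsilon=0}\cap(\alpha-\Trop(N))$ are in bijection with the intersecting arboreal pairs of $M$ and $N$.

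It remains to show that each such point $p$, lying in $\rho:=\sigma_\cF|_{y_\epsilon=0}$ and $\alpha-\sigma_\cH$, contributes multiplicity one, i.e.\ that $[\ZZ^E:N_{\rho}+N_{\sigma_\cH}]=1$ (the multiplicities of $\rho$ and $\sigma_\cH$ themselves being one). Since $\cF$ and $\cH$ are maximal, $N_{\sigma_\cF}=\bigoplus_k\ZZ\,\chi_{\tF_k}$ and $N_{\sigma_\cH}=\bigoplus_j\ZZ\,\chi_{\tH_j}$, and under the identification of $\ZZ^E$ with the edge lattice of $\Gamma_{\cF,\cH}$ these generators are precisely the (unsigned) vertex–incidence vectors of the bipartite graph $\Gamma_{\cF,\cH}$. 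As $\Gamma_{\cF,\cH}$ is a tree, rooting it and inducting from the leaves shows that every standard basis vector $e_a$ lies in the lattice they generate, so $N_{\sigma_\cF}+N_{\sigma_\cH}=\ZZ^E$. Finally, $\chi_E=\sum_j\chi_{\tH_j}\in N_{\sigma_\cH}$ and $N_{\sigma_\cF}=N_\rho\oplus\ZZ\chi_E$ (in reduced-flat coordinates $N_\rho$ is the coordinate hyperplane where the $\widehat F$-coordinate vanishes and $\chi_E$ is the all-ones vector), so $N_\rho+N_{\sigma_\cH}=N_\rho+\ZZ\chi_E+N_{\sigma_\cH}=N_{\sigma_\cF}+N_{\sigma_\cH}=\ZZ^E$. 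Therefore each point contributes $1$, and summing gives
\[
(-\Trop(N))\cdot\Trop(M)\cdot\Trop(y_\epsilon-1)=\#\{\text{intersecting arboreal pairs of }M,N\}.
\]
Since the left-hand side does not involve $\alpha$, the stated independence follows.

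The main obstacle I anticipate is the multiplicity-one verification: both the claim that the vertex–incidence vectors of a tree generate the whole edge lattice $\ZZ^E$, and the bookkeeping showing that imposing the affine constraint $y_\epsilon=0$ leaves this unchanged, need to be carried out carefully rather than waved at. A secondary point that must be made explicit is that a rapidly increasing $\alpha$ — living in a fixed cone rather than being "generic" in the naive sense — can nonetheless be chosen to make the stable intersection transverse, which is fine precisely because transversality excludes only a proper subvariety of translates.
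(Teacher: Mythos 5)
Your proposal is correct and follows essentially the same route as the paper's proof: slice $\Trop(M)$ by the hyperplane $\Trop(y_\epsilon-1)$, use translation invariance and the perturbation description of stable intersection to reduce to counting the transverse intersection $(\Trop(M)|_{y_\epsilon=0})\cap(\alpha-\Trop(N))$ for a generic, rapidly increasing $\alpha$, and then invoke \Cref{prop:flat+cone+intersect} to put the points in bijection with intersecting arboreal pairs. The only difference is that you carry out explicitly (via the incidence vectors of the tree $\Gamma_{\cF,\cH}$ generating $\ZZ^E$) the multiplicity-one verification that the paper dispatches in one line as a transversal intersection of cells of tropical linear spaces, and you make the injectivity of the pair-to-point map explicit; both additions are sound.
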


\begin{proof}
Let us recall the notation $X|_{y_{\epsilon} = 0} := X \cap \{y \mid y_\epsilon = 0\}$ for any subset $X \subseteq \RR^E$.
We note that every cell of $\Trop(M)$ contains $\RR\cdot \Chi_E$ as a lineality space, and hence $\Trop(M)$ meets transversely with $\Trop(y_{\epsilon} - 1) = \{y \mid y_{\epsilon} = 0\}$.
 It follows from the definition of stable intersection that
    \[
    \Trop(M) \wedge \Trop(y_\epsilon - 1) = \Trop(M) \cap \{y \mid y_{\epsilon} = 0\} = \Trop(M)|_{y_{\epsilon} = 0} \, .
    \]
    Using this, we observe that for any generic $\alpha \in \RR^E$
    \begin{align} \label{eq:intersection+product}
(-\Trop(N)) \cdot \Trop(M) \cdot \Trop(y_{\epsilon} - 1) &= (-\Trop(N)) \cdot (\Trop(M)|_{y_{\epsilon} = 0}) \\
    &= \#\left((\Trop(M)|_{y_{\epsilon} = 0}) \cap (\alpha -\Trop(N)) \right) \, . \nonumber
    \end{align}
    The last equality follows by combining \eqref{eq:stable+intersection+perturbation} with \Cref{lem:tropicalIntersectionProductTranslationInvariant}, where the tropical intersection product is equal to the number of points (counted with multiplicity) in the intersection after any generic perturbation, namely $\alpha \in \RR^E$. 
    Moreover, the multiplicity of any point in $(\Trop(M)|_{y_{\epsilon} = 0}) \cap (\alpha -\Trop(N))$ is 1, as they are transversal intersections of cells from tropical linear spaces.
    Hence we are just counting the number of points.
    As this holds for any generic $\alpha$, we can choose $\alpha$ to be rapidly increasing.

    From the definition of the Bergman fan, the points in $(\Trop(M)|_{y_{\epsilon} = 0}) \cap (\alpha -\Trop(N))$ can be enumerated by counting the intersection points of $\sigma_\cF \cap (\alpha - \sigma_\cH)$ where $\cF \in \Delta(M), \cH \in \Delta(\cH)$ are proper chains of flats of $M$ and $N$ respectively.
    As such, we can apply \Cref{prop:flat+cone+intersect} to deduce that
    \begin{align*}
& \#\left((\Trop(M)|_{y_{\epsilon} = 0}) \cap (\alpha -\Trop(N)) \right) \\
=& \#\{(\cF,\cH) \in \Delta(M) \times \Delta(N) \mid (\cF,\cH) \text{ intersecting arboreal pair}\} \, .
    \end{align*}
    Coupling with \eqref{eq:intersection+product} gives the result.
\end{proof}

\subsection{Intersecting arboreal pairs for minimally 2-rigid graphs}

We now apply this machinery to our setting of realisation numbers of minimally 2-rigid graphs.
Combining this with \Cref{thm:main} gives the following combinatorial characterisation of the realisation number as a fairly immediate corollary.

\begin{theorem}\label{thm:arboreal}
    Let $G$ be a minimally 2-rigid graph and let $M_G$ be its graphic matroid.
    Then the realisation number $c_2(G)$ is equal to the number of distinct pairs of chains of flats of $M_G$ that form an intersecting arboreal pair, i.e.
    \[
    c_2(G) = \#\left\{\{\cF, \cH\} \in \binom{\Delta(M_G)}{2} \: \bigg| \: (\cF, \cH) \text{ intersecting arboreal pair} \right\}.
    \]
\end{theorem}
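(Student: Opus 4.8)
The plan is to read this off from \Cref{thm:main} together with \Cref{prop:intersecting+arboreal+pair+invariant}, applied to the pair $M = N = M_G$, and then to pass from ordered to unordered pairs of chains of flats. First I would verify that $M_G$ satisfies the hypotheses of \Cref{prop:intersecting+arboreal+pair+invariant}. Since $G$ is simple, $M_G$ is loopless, and since $G$ is minimally $2$-rigid it is $(2,3)$-tight by \Cref{lem:maxwelllike}; a $(2,3)$-tight graph on $n \ge 3$ vertices is connected (a disconnected graph with components on $n_1,\dots,n_c$ vertices has at most $\sum_i(2n_i-3) = 2n-3c < 2n-3$ edges when $c\ge 2$), so $r(M_G) = n-1$ and $|E(G)| = 2n-3$, whence $r(M_G) + r(M_G) = 2(n-1) = |E(G)| + 1$. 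Taking $\epsilon$ to be the edge $12$ (a legitimate choice, since the labelling fixed in \Cref{def:polynomialSystemEdgeVariables} ensures $12 \in E(G)$) and fixing any generic rapidly increasing $\alpha$, \Cref{prop:intersecting+arboreal+pair+invariant} gives
\[
(-\Trop(M_G)) \cdot \Trop(M_G) \cdot \Trop(y_{12}-1) \;=\; \#\big\{(\cF,\cH)\in \Delta(M_G)\times\Delta(M_G) \;\big|\; (\cF,\cH)\text{ is an intersecting arboreal pair}\big\},
\]
and by \Cref{thm:main} the left-hand side is $2c_2(G)$. Thus $2c_2(G)$ equals the number of \emph{ordered} intersecting arboreal pairs of $M_G$ with itself.

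It then remains to show this count is exactly twice the number of unordered such pairs, i.e.\ that the relation ``$(\cF,\cH)$ is an intersecting arboreal pair'' is symmetric and irreflexive on $\Delta(M_G)$. Symmetry is immediate from the definition: the intersection graph $\Gamma_{\cH,\cF}$ is the same bipartite multigraph as $\Gamma_{\cF,\cH}$ with the two sides of the bipartition interchanged and identical edge labels, so condition (1) of \Cref{prop:flat+cone+intersect} (being a tree) is unaffected, while conditions (2) and (3) simply exchange roles — a path of $(\cF,\cH)$ being $\cF$-maximal is, verbatim, the condition required of the corresponding path of $(\cH,\cF)$, since ``$\cF$-maximal'' only records the orientation of the largest edge relative to the bipartition. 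For irreflexivity, suppose $(\cF,\cF)$ were an intersecting arboreal pair. By the ``moreover'' clause of \Cref{prop:flat+cone+intersect}, $\cF$ must be maximal, so its reduced flats $\widetilde{F}_1,\dots,\widetilde{F}_{r(M_G)}$ partition $E(G)$ into $r(M_G) = n-1 \ge 2$ nonempty blocks. Every edge $e(a)$ of $\Gamma_{\cF,\cF}$ joins the two copies of the unique block containing $a$, so $\Gamma_{\cF,\cF}$ has at least $n-1 \ge 2$ connected components, contradicting condition (1). Hence $\cF \ne \cH$ for every intersecting arboreal pair, and the ordered pairs fall into $2$-element orbits $\{(\cF,\cH),(\cH,\cF)\}$ under the swap; dividing by $2$ yields the stated formula.

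I expect the only step that is more than bookkeeping is the passage from ordered to unordered pairs — the two cited results carry essentially all the content — and within that, the one point genuinely requiring an argument rather than a formality is ruling out $\cF = \cH$, for which the disconnectedness of $\Gamma_{\cF,\cF}$ observed above is the key. (A slight alternative, avoiding the ``moreover'' clause, is to note directly that $\Gamma_{\cF,\cF}$ either has $\ge 2$ reduced flats, hence is disconnected, or has a single reduced flat equal to $E(G)$, in which case it has $|E(G)| = 2n-3 \ge 3$ parallel edges and therefore contains a cycle; in both cases it fails to be a tree.)
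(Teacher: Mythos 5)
Your proposal is correct and follows essentially the same route as the paper: combine \Cref{thm:main} with \Cref{prop:intersecting+arboreal+pair+invariant} for $M=N=M_G$ (using $2r(M_G)=|E|+1$), then observe that the intersecting-arboreal-pair condition is symmetric and that $(\cF,\cF)$ can never occur, so the ordered count is exactly twice the unordered one. Your irreflexivity argument (disconnectedness via maximality, or the parallel-edge alternative) is just a slightly more detailed version of the paper's observation that $\Gamma_{\cF,\cF}$ consists of parallel edges $(\widetilde F_i,\widetilde F_i)$ and hence is not a tree.
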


\begin{proof}
    Let $(G = [n], E)$ be a minimally 2-rigid graph with edge $e \in E$.
    Note that $M_G$ has rank $n-1$ on $|E| = 2n-3$ elements, hence $2r(M_G) = |E| +1$.
    As such, we can apply \Cref{prop:intersecting+arboreal+pair+invariant} in the case where $M = N = M_G$, along with \Cref{thm:main}, to deduce that
    \begin{align*}
    c_2(G) &= \frac{1}{2}(-\Trop(M_G)) \cdot \Trop(M_G) \cdot \Trop(y_e - 1) \\
    &= \frac{1}{2}\#\big\{(\cF,\cH) \in \Delta(M_G) \times \Delta(M_G) \mid (\cF,\cH) \text{ intersecting arboreal pair} \big\} \, .
    \end{align*}
    The final observation is that $(\cF, \cH)$ is an intersecting arboreal pair if and only if $(\cH, \cF)$ is also, as the conditions of \Cref{prop:flat+cone+intersect} are symmetric when $M = N$.
    Moreover, it also follows from \Cref{prop:flat+cone+intersect} that we can never have $(\cF,\cF)$ as an arboreal pair: the edges of $\Gamma_{\cF,\cF}$ are precisely $|\tF_i|$ copies of the edge $(\tF_i, \tF_i)$ for each vertex $\tF_i$, and so $\Gamma_{\cF,\cF}$ is not a tree.
    As such, we can restrict $\Delta(M_G)$ to $\binom{\Delta(M_G)}{2}$ to avoid double counting.
\end{proof}

\begin{example}\label{example: intersecting arboreal pairs}
Consider the graph $G = K_4^-$ with edges $[5]$ shown on the left of \Cref{fig:intersecting_arboreal_pairs}. We order the edges $1 > 2 > 3 > 4 > 5$. The rank-one flats are $1,2,3,4,5$, and the rank-two flats are $125, 13, 14, 23, 24, 345$. So, there are $14 = |\Delta(M_G)|$ maximal chains of flats in $M_G$. Consider the two pairs of chains of flats given by
\[
\{\cF_1 : 1 \subseteq 13,\,
 \cH_1 : 2 \subseteq 24\}
\quad \text{and} \quad
\{\cF_2 : 1 \subseteq 14,\,
 \cH_2 : 2 \subseteq 23\}.
\]
Their graphs, $\Gamma_{\cF_1, \cH_1}$ and $\Gamma_{\cF_2, \cH_2}$, are shown in the middle and right of \Cref{fig:intersecting_arboreal_pairs} respectively. Let us consider the graph $\Gamma_{\cF_1, \cH_1}$, which has vertices $\widetilde F_1$, $\widetilde F_2$, $\widetilde F_3$, labelled with $1$, $3$, $245$, and vertices $\widetilde H_1$, $\widetilde H_2$, $\widetilde H_3$, labelled with $2$, $4$, $135$ respectively. Observe that the directed path $\widetilde F_1 \rightarrow \widetilde F_2$ is given by the sequence of edges: $1 = (\widetilde F_1, \widetilde H_3)$, $3 = (\widetilde H_3, \widetilde F_2)$. The maximal edge in this path, with respect to our fixed ordering, is edge $1$. This edge is directed from $\widetilde F_1$ to $\widetilde H_3$, hence this path is $\cF$-maximal. Similarly, the maximal edge in each directed path $\widetilde F_1 \rightarrow \widetilde F_3$ and $\widetilde F_2 \rightarrow \widetilde F_3$ is directed from $\widetilde F_i$ to $\widetilde H_j$ for some $i$ and $j$, so they are both $\cF$-maximal. Moreover, each directed path $\widetilde H_u \rightarrow \widetilde H_v$ is $\cH$-maximal. So, we have shown that $\{\cF_1, \cH_1\}$ is an intersecting arboreal pair. Similarly, $\{\cF_2, \cH_2\}$ is another intersecting arboreal pair.

It turns out that these are the only two intersecting arboreal pairs, so by \Cref{thm:arboreal}, the $2$-realisation number of $G$ is $2$.
\end{example}

\begin{figure}[t]
    \centering
    \includegraphics[width=0.9\textwidth]{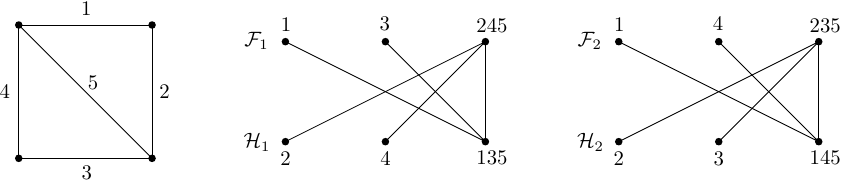}
    \caption{Graph $G$ (left), and graphs $\Gamma_{\cF_1, \cH_1}$ (middle) and $\Gamma_{\cF_2, \cH_2}$ (right) of the intersecting arboreal pairs $\{\cF_1, \cH_1\}$ and $\{\cF_2, \cH_2\}$ in \Cref{example: intersecting arboreal pairs}.}
    \label{fig:intersecting_arboreal_pairs}
\end{figure}

\section{Upper and lower bounds}\label{sec: upper and lower bounds}

We now utilise the combinatorial characterisation of the previous section (\Cref{thm:arboreal}) to provide both upper and lower bounds for $c_2(G)$.

\subsection{An upper bound on \texorpdfstring{$c_2(G)$}{c2(G)}}

While \Cref{thm:arboreal} gives a purely combinatorial condition for the realisation number, it's not clear how to compute it beyond brute force.
We give a candidate for bounding the realisation number via a known matroid invariant that is more amenable to computation.

\begin{definition}\label{def:nbc-basis}
Let $M$ be a matroid on ground set $E$ with some fixed total order $\prec$ on $E$.
A \emph{broken circuit} of $M$ is any set $\widetilde{C}:= C \setminus \min(i : i \in C)$, where $C$ is a circuit of $M$.
A \emph{non-broken circuit (nbc)-basis} (with respect to $\prec$) is a basis $B$ of $M$ that contains no broken circuits.
\end{definition}

It is a non-trivial fact that the number of nbc-bases does not depend on the total order (see, for example, \cite[Theorem 7.4.6]{Bjorner:1992}). We write $\nbc M$ for the number of nbc-bases of a matroid $M$ with respect to some total order.

One necessary condition for a basis $B$ to be an nbc-basis $B$ is that it must contain the minimal element $\min(E)$ of $E$.
Otherwise, if $B$ does not contain $\min(E)$, then there must exist some (fundamental) circuit $C$ such that $\min(E) \in C \subseteq B \cup \min(E)$, contradicting the assumption that $B$ contains no broken circuits.

We may characterise nbc-bases in terms of maximal chains of flats.
Let $B = \{b_1, \dots, b_k\}$ be a basis with $b_1 \succ \cdots \succ b_k$. For each $i \in [k]$, let $F_i = \cl(b_1, b_2, \dots, b_i) \subseteq E$ be a flat and write $F_0 = \cl(\emptyset) = \emptyset$.
With this it follows that $\rank F_i = i$ for each $i =0,1,\ldots,k$.
We define the maximal chain of flats associated to the basis $B$ as
\[
\cF(B) : F_0 \subsetneq F_1 \subsetneq F_2 \subsetneq \cdots \subsetneq F_k = E \, .
\]
A basis is now an nbc-basis if and only if its associated maximal chain of flats has a very specific structure.

\begin{lemma}[{\cite[(7.30) (7.31)]{Bjorner:1992}}] \label{lem:nbcBasisToFlat}
Let $B$ be a basis of an ordered matroid $M$ and $\cF(B)$ its associated maximal chain of flats.
Then $B$ is an nbc-basis if and only if $b_i = \min(F_i)$ for each $i\in \{0,\ldots, r(M)\}$.
\end{lemma}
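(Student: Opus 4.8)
The plan is to prove both implications directly, exploiting the interaction between the fundamental circuits of $B$ and the ``layers'' of the chain $\cF(B)$. Write $k = r(M)$ and $B = \{b_1 \succ \cdots \succ b_k\}$, and recall from the preceding discussion that $F_i = \cl(b_1,\dots,b_i)$ has $\rank F_i = i$, that $\emptyset = F_0 \subsetneq F_1 \subsetneq \cdots \subsetneq F_k = E$ is a maximal chain of flats, and hence that every $e \in E$ lies in a unique ``layer'' $F_{\lambda(e)} \setminus F_{\lambda(e)-1}$; since $\{b_1,\dots,b_j\}$ is independent, $b_j$ lies in layer $j$, i.e. $\lambda(b_j) = j$. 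For $i = 0$ the asserted condition is vacuous ($F_0 = \emptyset$ and there is no $b_0$), so it suffices to treat $i \in \{1,\dots,k\}$.

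The technical heart of the argument is the following observation about fundamental circuits, which I would isolate first. Let $e \in E \setminus B$ and set $i = \lambda(e)$; then the unique circuit $C(e,B)$ contained in $B \cup \{e\}$ satisfies $C(e,B) \subseteq \{e, b_1, \dots, b_i\}$ and $\{e, b_i\} \subseteq C(e,B)$. This follows because $\{e, b_1, \dots, b_i\}$ has rank $i$ (as $e \in F_i = \cl(b_1,\dots,b_i)$) and cardinality $i+1$, hence contains exactly one circuit $C$; that circuit must contain $e$ (otherwise $C \subseteq \{b_1,\dots,b_i\}$, which is independent) and must contain $b_i$ (otherwise $C \subseteq \{e, b_1,\dots,b_{i-1}\}$ would force $e \in F_{i-1}$, contradicting $\lambda(e) = i$); and since $C$ is a circuit inside $B \cup \{e\}$, uniqueness of the fundamental circuit gives $C = C(e,B)$, whence also $C(e,B) \subseteq \{e, b_1, \dots, b_i\}$.

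Given this, both directions are short. For $(\Leftarrow)$, suppose $B$ is an nbc-basis but $b_i \ne \min(F_i)$ for some $i$, chosen minimal, and pick $e \in F_i$ with $e \prec b_i$. Minimality of $i$, together with $b_i \prec b_{i-1}$, forces $e \notin F_{i-1}$, so $\lambda(e) = i$; and $e \prec b_i$ with $\lambda(b_j) = j$ forces $e \notin B$. By the observation, every element of $C(e,B) \setminus \{e\}$ is some $b_j$ with $j \le i$, hence $\succeq b_i \succ e$, so $e = \min C(e,B)$ and $C(e,B) \setminus \{e\} \subseteq \{b_1,\dots,b_i\} \subseteq B$ is a broken circuit contained in $B$, a contradiction. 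For $(\Rightarrow)$, suppose $b_i = \min(F_i)$ for all $i$ but $C \setminus \{e\}$ is a broken circuit contained in $B$, with $e = \min C$; then $e \notin B$ (else $C \subseteq B$ is dependent) and $C = C(e,B)$, so with $i = \lambda(e)$ the observation gives $b_i \in C(e,B)$ and hence $e \prec b_i$ (as $b_i \ne e$); but $e \in F_i$ and $b_i = \min(F_i)$ give $b_i \preceq e$, a contradiction.

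I expect the only real care needed is bookkeeping: keeping straight the total order $\prec$ against the decreasing enumeration $b_1 \succ \cdots \succ b_k$, handling the degenerate index $i = 0$, and invoking the standard matroid fact that a dependent set with one more element than its rank contains a unique circuit. All of the genuine content sits in the fundamental-circuit observation of the second paragraph.
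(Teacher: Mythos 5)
Your proof is correct. Note that the paper does not actually prove this lemma — it is imported verbatim from Björner ((7.30)–(7.31) of the cited chapter) — so there is no in-paper argument to compare against; your argument is essentially the standard one, resting on the observation that $B$ is an nbc-basis exactly when no $e \notin B$ is the minimum of its fundamental circuit $C(e,B)$ (i.e.\ $B$ has external activity zero), combined with your layer computation showing $C(e,B) \subseteq \{e, b_1, \dots, b_{\lambda(e)}\}$ with $b_{\lambda(e)} \in C(e,B)$. I checked the key observation and both implications and they hold; in particular the use of ``a set with one more element than its rank contains a unique circuit'' and the identification of that circuit with $C(e,B)$ are sound. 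Two cosmetic points only: your direction labels are swapped (the argument you call $(\Leftarrow)$ proves that an nbc-basis satisfies $b_i = \min(F_i)$, i.e.\ the forward implication, and vice versa), and in that argument the phrase ``together with $b_i \prec b_{i-1}$'' tacitly assumes $i \ge 2$ — for $i=1$ the needed conclusion $e \notin F_0 = \emptyset$ is trivial, as your earlier remark about degenerate indices already covers. Neither affects correctness.
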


Our main theorem for this subsection will be the following upper bound on the realisation number in terms of nbc-bases.

\begin{theorem} \label{thm:nbc+bases+upper+bound}
Let $G$ be a minimally 2-rigid graph and let $M_G$ be its graphic matroid.
Then
\[
2c_2(G) \leq \nbc{M_G}.
\]
\end{theorem}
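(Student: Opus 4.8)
The plan is to reduce to counting intersecting arboreal pairs and then inject these into nbc-bases. As in the proof of \Cref{thm:arboreal}, since $G$ is minimally $2$-rigid the matroid $M_G$ has rank $k := n-1$ on $|E(G)| = 2n-3$ elements, so $2\,r(M_G) = |E(G)|+1$ and \Cref{prop:flat+cone+intersect,prop:intersecting+arboreal+pair+invariant} apply with $M = N = M_G$. Fix an edge $\epsilon \in E(G)$ and a generic, rapidly increasing $\alpha \in \RR^{E(G)}$, and write $\prec$ for the total order it induces on $E(G)$. Combining \Cref{thm:main} with \Cref{prop:intersecting+arboreal+pair+invariant} gives
\[ 2c_2(G) = \#\big\{(\cF,\cH) \in \Delta(M_G) \times \Delta(M_G) \mid (\cF,\cH) \text{ is an intersecting arboreal pair w.r.t. } \alpha\big\}. \]
Thus it suffices to construct an injection from ordered intersecting arboreal pairs into the set of nbc-bases of $M_G$ with respect to $\prec$ (whose size is $\nbc{M_G}$, independently of $\prec$).

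The first step is to show that if $(\cF,\cH)$ is an intersecting arboreal pair then $\cF$ equals the chain of flats $\cF(B)$ associated to a unique nbc-basis $B$ of $M_G$ (and symmetrically for $\cH$). By \Cref{prop:flat+cone+intersect} the chain $\cF$ is maximal, with reduced flats $\widetilde F_1, \dots, \widetilde F_k$, and condition~(2) of that proposition says every path $\widetilde F_i \to \widetilde F_j$ with $i<j$ in the tree $\Gamma_{\cF,\cH}$ is $\cF$-maximal. Using the alternating-sum description of $y_{\widetilde F_i} - y_{\widetilde F_j}$ from the proof of \Cref{prop:flat+cone+intersect} (which in turn rests on \Cref{lem:path}) together with the rapidly-increasing hypothesis on $\alpha$, I would translate this family of conditions into the single statement $\min_\prec \widetilde F_1 \succ \min_\prec \widetilde F_2 \succ \cdots \succ \min_\prec \widetilde F_k$. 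By \Cref{lem:nbcBasisToFlat} and the bijection between nbc-bases and their chains of flats, this says precisely that $\cF = \cF(B)$ for the nbc-basis $B$ with $b_i = \min_\prec(\widetilde F_1 \cup \cdots \cup \widetilde F_i)$.

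The second step is to show that $(\cF,\cH) \mapsto \cF$ is injective, i.e.\ that for each maximal chain $\cF$ there is at most one $\cH$ forming an intersecting arboreal pair with it. The key observation is that $\Gamma_{\cF,\cH}$ is a tree on $2k$ vertices, hence has exactly $2k-1 = |E(G)|$ edges, so every element of $E(G)$ occurs as an edge of $\Gamma_{\cF,\cH}$; moreover $\cF$ already prescribes the $\cF$-endpoint of each such edge. Building the tree outwards from the distinguished vertex $\widehat F$ (the reduced flat of $\cF$ containing $\epsilon$), I would argue that the $\cF$-maximality and $\cH$-maximality conditions of \Cref{prop:flat+cone+intersect} determine, at each stage and without any choice, which reduced flat of $\cH$ a newly attached edge must go to; hence the partition $\{\widetilde H_1, \dots, \widetilde H_k\}$ and its ordering — and therefore $\cH$ — are forced by $\cF$. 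Composing the two steps with the bijection $\cF \mapsto B$ of \Cref{lem:nbcBasisToFlat} produces an injection from ordered intersecting arboreal pairs to nbc-bases, and hence $2c_2(G) \le \nbc{M_G}$.

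The main obstacle is the two combinatorial claims in the middle paragraphs: reading off ``the reduced flats of $\cF$ have strictly $\prec$-decreasing minima'' and ``$\cH$ is determined by $\cF$'' from the path-maximality conditions of \Cref{prop:flat+cone+intersect}. Both require careful bookkeeping with $\Gamma_{\cF,\cH}$, the paths of \Cref{lem:path}, and the rapidly-increasing hypothesis. An alternative, matching the heuristic in the introduction, is to first prove the identity $\nbc{M_G} = (-\Trop(M_G)) \cdot \Trop(U_{m,m-k+1}) \cdot \Trop(y_\epsilon - 1)$ with $m = |E(G)|$, again via \Cref{prop:flat+cone+intersect,prop:intersecting+arboreal+pair+invariant} but now applied to the pair $(U_{m,m-k+1}, M_G)$ — whose $\cF$-side chains of flats are especially simple, consisting of $k-1$ singletons and one $k$-element block — and then to deduce the inequality by checking that for every fixed chain of flats $\cH$ of $M_G$ there are at least as many intersecting arboreal partners in $\Delta(U_{m,m-k+1})$ as in $\Delta(M_G)$.
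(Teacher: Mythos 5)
Your first reduction is fine: $2c_2(G)$ equals the number of ordered intersecting arboreal pairs of $(M_G,M_G)$, exactly as in the proof of \Cref{thm:arboreal}, and the identity $\nbc{M_G}=(-\Trop(M_G))\cdot\Trop(U_{2n-3,n-1})\cdot\Trop(y_\epsilon-1)$ you invoke at the end is the paper's \Cref{prop:nbc+degree}. The gap is the bridge between these two counts: the combinatorial claims in your middle paragraphs are not just unproven but false. It is not true that in an intersecting arboreal pair of $(M_G,M_G)$ each component must be $\cF(B)$ for an nbc-basis $B$ (equivalently, that path-maximality forces the minima of the reduced flats to decrease). Take the $5$-vertex minimally $2$-rigid graph on the left of \Cref{fig:realrealisations} ($K_4^-$ with an added degree-$2$ vertex), with edge order $12\succ 13\succ 14\succ 23\succ 24\succ 35\succ 45$. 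Here $c_2(G)=4$, so by \Cref{thm:arboreal} there are four unordered intersecting arboreal pairs, while $M_G$ has ten nbc-bases. Writing out the ten associated chains and checking all $\binom{10}{2}$ pairs, every pair except $\big(\cF(\{12,23,35,45\}),\cF(\{13,14,24,45\})\big)$ fails: most have two reduced flats meeting in at least two elements (a double edge, so $\Gamma_{\cF,\cH}$ is not a tree), and the remaining few violate the path-maximality conditions of \Cref{prop:flat+cone+intersect}. Hence at least three of the four intersecting arboreal pairs involve a chain that is not of the form $\cF(B)$ for any nbc-basis, and your map $(\cF,\cH)\mapsto B$ is undefined on them. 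The translation ``$\cF$-maximality $\Rightarrow$ decreasing minima of reduced flats'' does work when the opposite matroid is uniform, because the singleton reduced flats force $\Gamma_{\cF,\cH}$ to be a star; that is exactly the content of the proof of \Cref{prop:nbc+degree}, and it does not survive when both sides are $M_G$.

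The same example defeats the alternative route in your last paragraph: by the proof of \Cref{prop:nbc+degree}, a fixed $\cH\in\Delta(M_G)$ has a partner in $\Delta(U_{m,m-k+1})$ if and only if $\cH=\cF(B)$ for an nbc-basis $B$, and then exactly one; but as above a non-nbc chain can have a partner in $\Delta(M_G)$, so the partner-by-partner comparison fails, and the inequality $2c_2(G)\le\nbc{M_G}$ holds only in aggregate. This is precisely why the paper does not argue chain-by-chain: it proves \Cref{lem:Bergman+degen} by deforming $\Trop(U_{2n-3,n-1})$ into a tropical linear space $L_q$ whose star at the origin is $\Trop(M_G)$, so that intersection points with $\Trop(M_G)$ inject into those with $L_q$ (\Cref{lem:IntersectionNumberStar}) while other intersection points of $L_q$ may escape to infinity, and then uses recession-fan invariance of the intersection number (\Cref{lem:degrees}) to identify $\Sigma\cdot L_q$ with $\Sigma\cdot\Trop(U_{2n-3,n-1})$, which \Cref{prop:nbc+degree} evaluates as $\nbc{M_G}$. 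Any repair of your approach would need such a global degeneration (or an injection that does not insist the pairs themselves come from nbc-bases); as written, both proposed routes break at their key step.
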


A key lemma for deriving bounds will be the following:

\begin{lemma} \label{lem:nbc+basis+intersection}
    Let $M, N$ be two loopless matroids on the totally ordered ground set $E$ such that $r(M) + r(N) = |E| + 1$.
    For each basis $B$ of $M$, write $B':= E \setminus B \cup \min(E)$.
    If $B$ and $B'$ are nbc-bases of $M$ and $N$ respectively, then $(\cF(B), \cF(B')) \in \Delta(M) \times \Delta(N)$ is an intersecting arboreal pair.
\end{lemma}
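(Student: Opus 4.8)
The plan is to verify, for the pair $(\cF,\cH):=(\cF(B),\cF(B'))$, the three combinatorial conditions of \Cref{prop:flat+cone+intersect}, using \Cref{lem:nbcBasisToFlat} to pin down the minima of the reduced flats. Write $B=\{b_1\succ\dots\succ b_k\}$ and $B'=\{b'_1\succ\dots\succ b'_\ell\}$ with $k=r(M)$ and $\ell=r(N)$. By \Cref{lem:nbcBasisToFlat}, $b_i=\min(F_i)$; since $b_i\in\widetilde F_i\subseteq F_i$ this forces $b_i=\min(\widetilde F_i)$, and likewise $b'_j=\min(\widetilde H_j)$. In particular $\min(E)=b_k=b'_\ell$, so $e(b_k)$ joins $\widetilde F_k$ to $\widetilde H_\ell$. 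Two further elementary observations will be used repeatedly: the whole situation is symmetric under $(M,B)\leftrightarrow(N,B')$ because $B=(E\setminus B')\cup\{\min E\}$, and $B\cap B'=\{\min E\}$. Note also that the intersection graph $\Gamma:=\Gamma_{\cF,\cH}$ has $k+\ell=|E|+1$ vertices and exactly $|E|$ edges, one for each element of $E$.

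First I would prove $\Gamma$ is a tree. As $|V(\Gamma)|=|E(\Gamma)|+1$, it suffices to prove $\Gamma$ is connected, and for that I would show by downward induction on $i$ that $\widetilde F_i$ is joined to $\widetilde F_k$. The inductive step follows the edge $e(b_i)$ from $\widetilde F_i$ to the reduced flat $\widetilde H_j\ni b_i$, and then examines $b'_j=\min(\widetilde H_j)\in B'=(E\setminus B)\cup\{\min E\}$: if $b'_j=\min E$ then $j=\ell$ and we reach $\widetilde H_\ell$, hence $\widetilde F_k$; otherwise $b'_j\notin B$, and following $e(b'_j)$ lands in some $\widetilde F_{i'}$ with $b_{i'}\preceq b'_j\preceq b_i$, so $i'\ge i$, while $i'=i$ is impossible since it would give $b'_j=b_i\in B$. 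Thus $i'>i$ and the induction applies. By symmetry every $\widetilde H_j$ is joined to $\widetilde H_\ell$, and since $\widetilde F_k$ and $\widetilde H_\ell$ are adjacent, $\Gamma$ is connected, hence a tree.

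Next I would record a rooted-tree structure: rooting $\Gamma$ at $\widetilde H_\ell$, the parent edge of every non-root vertex $v$ is $e(\min v)$ (which is incident to $v$ since $\min v\in v$). I would prove this by contradiction, choosing a failing vertex $v_0$ with $\min v_0$ maximal; then $e(\min v_0)$ joins $v_0$ to a child $v_1$, so $\min v_0\in v_1$ and $\min v_1\preceq\min v_0$; equality would force, via $B\cap B'=\{\min E\}$, that one of $v_0,v_1$ is $\widetilde H_\ell$, contradicting that $v_1$ is a child and $v_0$ is non-root; so $\min v_1\prec\min v_0$, and as $e(\min v_1)\ne e(\min v_0)$ the claim also fails at $v_1$, contradicting maximality. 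With this in hand, the last step---$\cF$-maximality of each path from $\widetilde F_i$ to $\widetilde F_j$ with $i<j$---runs as follows. Along any upward path from a flat $\widetilde F_a$ the parent-edge labels strictly decrease starting from $\min(\widetilde F_a)=b_a$ (each successive label lies in the next vertex, and distinct edges carry distinct labels). The path from $\widetilde F_i$ to $\widetilde F_j$ is the upward path from $\widetilde F_i$ to the least common ancestor followed by the downward path to $\widetilde F_j$; its upward portion from $\widetilde F_i$ is nonempty, for otherwise $\widetilde F_i$ would be a strict ancestor of $\widetilde F_j$ and the decreasing-label property along the ascent from $\widetilde F_j$ through $\widetilde F_i$ would give $b_i\prec b_j$, contradicting $i<j$. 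Hence the first edge of the path is $e(b_i)$, its label $b_i$ is the largest on the whole path (every other label is $\preceq b_i$, and those on the $\widetilde F_j$-side are $\preceq b_j\prec b_i$), and it is traversed from $\widetilde F_i$ to an $\cH$-vertex; so the path is $\cF$-maximal. The symmetric argument gives condition (3), and \Cref{prop:flat+cone+intersect} then yields the claim.

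The main obstacle is the combinatorial bookkeeping in the last two steps: establishing that every non-root vertex is attached to its parent by the edge labelled by its minimum element, and then correctly identifying the largest edge of a path from $\widetilde F_i$ to $\widetilde F_j$ together with its orientation. The only genuinely non-routine inputs are \Cref{lem:nbcBasisToFlat}, which gives $b_i=\min(\widetilde F_i)$, and the disjointness relation $B\cap B'=\{\min E\}$, which is exactly what rules out the degenerate ``equality'' cases at each stage.
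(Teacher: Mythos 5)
Your proposal is correct in substance and is essentially the paper's argument: both proofs verify the three conditions of \Cref{prop:flat+cone+intersect}, obtain the tree property from connectivity together with the count of $|E|+1$ vertices versus $|E|$ edges, and obtain $\cF$- and $\cH$-maximality from the fact (via \Cref{lem:nbcBasisToFlat} and $B\cap B'=\{\min(E)\}$) that each edge $e(\min v)$ leads strictly closer to the reduced flat containing $\min(E)$. Your rooted-tree formulation, with parent edge $e(\min v)$ and root $\widetilde H_\ell$, is exactly the paper's orientation-with-unique-sink argument (the paper's sink is the $\cF$-side flat $\widetilde F_k$ containing $\min(E)$; yours is the $\cH$-side one, which is the same argument by the $B\leftrightarrow B'$ symmetry), and your inductive walk for connectivity is a mild repackaging of the paper's minimal-edge-in-a-component contradiction, using the same two inputs.

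One local slip should be fixed: in your proof that the parent edge of every non-root vertex $v$ is $e(\min v)$, you pick a failing vertex $v_0$ with $\min v_0$ \emph{maximal} and then produce a failing child $v_1$ with $\min v_1\prec\min v_0$; this does not contradict maximality. Choose instead $v_0$ with $\min v_0$ minimal among failing vertices (or observe that the construction would produce an infinite strictly decreasing chain in the finite set $E$), and the contradiction goes through; nothing else in the argument is affected.
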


\begin{proof}
    Without loss of generality, let $1 = \min(E)$, i.e. $B' = E \setminus B \cup \{1\}$.
    Write $\cF := \cF(B)$ and $\cH := \cF(B')$.
    To be an intersecting arboreal pair, we must show that $\Gamma_{\cF,\cH}$ satisfies the conditions of \Cref{prop:flat+cone+intersect}.

    As $\cF$ and $\cH$ are maximal, $\Gamma_{\cF,\cH}$ has $r(M) + r(N) = |E| + 1$ vertices and $|E|$ edges; thus, if we can show $\Gamma_{\cF,\cH}$ is connected then it is a tree.
    Suppose for the sake of a contradiction that $\Gamma_{\cF,\cH}$ is not connected, and let $A$ be a connected component not containing the edge $e(1)$. Note that $\Gamma_{\cF, \cH}$ has no isolated vertices, hence $A$ contains an edge.
    Recall that there is a natural bijection between the edges $E$ and the edges of the intersection graph $\Gamma_{\cF, \cH}$ given by $i \mapsto e(i)$. This bijection naturally induces an ordering on the edges of $\Gamma_{\cF, \cH}$ by $e(i) < e(j)$ if $i \prec j$.

    Let $e(a) = (\widetilde{F}_i, \widetilde{H}_j)$ be the smallest edge in $A$.
    Then, by definition, we have that $a \in \widetilde{F}_{i} \cap \widetilde{H}_{j}$. Observe that any other element $a' \in \widetilde{F}_{i} \cup \widetilde{H}_{j}$ corresponds to an edge $e(a')$ that is incident to either $\widetilde{F}_i$ or $\widetilde{H}_j$, hence it belongs to $A$. So, by the assumption of minimality, we have that $e(a) \le e(a')$ and so $e(a)$ is the smallest element of both $\widetilde{F}_i$ and $\widetilde{H}_j$.
    As $B$ and $B'$ are both nbc-bases, by \Cref{lem:nbcBasisToFlat}, we have that $a \in B \cap B'$.
    But $a\succ 1$, in particular $a \neq 1$,  giving a contradiction. So we have shown that $\Gamma_{\cF, \cH}$ is a tree.

    We next show that every path $\widetilde{F}_i \rightarrow \widetilde{F}_j$ is $\cF$-maximal for all $1 \leq i < j \leq r(M)$.
    We prove a stronger property: the first edge in the path is always the largest.

    First consider the case where $j = r(M)$.
    For each $a \in E \setminus \{1\}$, we orient each edge $e(a) = (\widetilde{F}_k, \widetilde{H}_\ell)$ as follows:
    \[
    \begin{cases}
    \widetilde{F}_k \rightarrow \widetilde{H}_\ell &\text{if } \min(\widetilde{F}_k) \succ \min(\widetilde{H}_\ell), \\
    \widetilde{H}_\ell \rightarrow \widetilde{F}_k &\text{if } \min(\widetilde{H}_\ell) \succ \min(\widetilde{F}_k). \\
    \end{cases}
    \]
    We orient the edge $e(1) = (\widetilde{F}_{r(M)}, \widetilde{H}_{r(M)})$ in the direction $\widetilde{H}_{r(M)} \rightarrow \widetilde{F}_{r(M)}$.

    We now show that every vertex has an outgoing edge except $\widetilde{F}_{r(M)}$.
    For all $\widetilde{F}_k$ with $k \neq r(M)$, the edge $e({a})$ where $a = \min(\widetilde{F}_k)$ must be outgoing.
    If it is incoming, then we have $a = \min(\widetilde{F}_k) \preceq \min(\widetilde{H}_\ell) \preceq a$, hence we have $a = \min(\widetilde{H}_\ell)$. So by \Cref{lem:nbcBasisToFlat}, we have $a \in B \cap B'$.
    By the construction of $B'$, it follows that $a=1$. By the construction of the flag $\cF$, we have $1 = \min(B) \in \widetilde{F}_{r(M)}$. So $k = r(M)$, which is a contradiction. The same argument holds for the vertices $\widetilde{H}_k$, except in the case $k = r(M)$ where the edge $e(1)$ is outgoing. So we have shown that every vertex of $\Gamma_{\cF, \cH}$, except $\widetilde{F}_{r(M)}$, has an outgoing edge.

    As $\Gamma_{\cF,\cH}$ is an oriented tree with a unique sink $\widetilde{F}_{r(M)}$, the unique path $\widetilde{F}_i \rightarrow \widetilde{F}_{r(M)}$ must respect this orientation.
    Moreover, the edges in a path are strictly decreasing by the orientation definition, implying the first edge is the largest.

    For the case where $j \neq r(M)$, the path $\widetilde{F}_i \rightarrow \widetilde{F}_j$ is the symmetric difference of the paths $\widetilde{F}_i \rightarrow \widetilde{F}_{r(M)}$ and $\widetilde{F}_j \rightarrow \widetilde{F}_{r(M)}$.
    By the above, the largest edge of each of these paths is their first edge and they are outgoing. So their largest edges are $e(a)$ and $e(b)$ respectively, where
    \[
    a = \min(\widetilde{F}_{i}) \succ \min(\widetilde{F}_{j}) = b.
    \]
    So we have shown that the first edge of the path is the largest. This concludes the proof that $\Gamma_{\cF, \cH}$ is $\cF$-maximal. The proof that $\Gamma_{\cF, \cH}$ is $\cH$-maximal is identical.
\end{proof}

The key step in this upper bound is the following result on the degree of $-\Trop(M_G)$.
This can be deduced via the machinery in~\cite{AHK:18}, but we give a self-contained proof that avoids technicalities.

\begin{proposition}\label{prop:nbc+degree}
    Let $M$ be a loopless matroid of rank $k$ on $m$ elements.
    Then
    \begin{equation}\label{eq:nbc+degree}
        (-\Trop(M)) \cdot \Trop(U_{m,m-k+1}) \cdot \Trop(y_\epsilon - 1) = \nbc M
    \end{equation}
    where $U_{m,m-k+1}$ is the uniform matroid on $m$ elements of rank $m-k+1$, and $\epsilon$ is an arbitrary element of the ground set of $M$.
\end{proposition}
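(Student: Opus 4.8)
The plan is to reinterpret the left‑hand side of \eqref{eq:nbc+degree} combinatorially via \Cref{prop:intersecting+arboreal+pair+invariant} and then exhibit a bijection between the resulting intersecting arboreal pairs and the nbc‑bases of $M$. First note that $U := U_{m,m-k+1}$ is loopless and $r(M)+r(U)=k+(m-k+1)=m+1=|E|+1$, so \Cref{prop:intersecting+arboreal+pair+invariant}, applied to the pair $(U,M)$, shows that the quantity on the left of \eqref{eq:nbc+degree} equals the number of intersecting arboreal pairs $(\cF,\cH)\in\Delta(U)\times\Delta(M)$. Thus it suffices to biject these pairs with the nbc‑bases of $M$. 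I will write $1:=\min(E)$ and use freely that every nbc‑basis of a loopless matroid on $E$ contains $1$, together with the observation that, since every circuit of $U$ has size $m-k+2$, the nbc‑bases of $U$ are exactly the $(m-k+1)$‑subsets of $E$ containing $1$.

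For the inequality $\nbc M \leq \#\{\text{intersecting arboreal pairs}\}$, I would apply \Cref{lem:nbc+basis+intersection} to the matroid pair $(U,M)$: given an nbc‑basis $C$ of $M$, put $B:=(E\setminus C)\cup\{1\}$. Since $1\in C$, the set $B$ has $m-k+1$ elements and contains $1$, hence is an nbc‑basis of $U$, and $(E\setminus B)\cup\{1\}=C$ is an nbc‑basis of $M$ by assumption; so \Cref{lem:nbc+basis+intersection} gives that $(\cF(B),\cF(C))$ is an intersecting arboreal pair of $U$ and $M$. The assignment $C\mapsto(\cF(B),\cF(C))$ is injective since $C$ is recovered as the final reduced flat $\tH_k$ of $\cF(C)$.

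For the reverse inequality, let $(\cF,\cH)\in\Delta(U)\times\Delta(M)$ be an intersecting arboreal pair. By \Cref{prop:flat+cone+intersect} both chains are maximal, so by \Cref{ex:bergman} the reduced flats of $\cF$ are singletons $\{c_1\},\dots,\{c_{m-k}\}$ together with a $k$‑set $D:=E\setminus\{c_1,\dots,c_{m-k}\}$, while $\cH$ has reduced flats $\tH_1,\dots,\tH_k$ with $r(H_q)=q$. In $\Gamma_{\cF,\cH}$ each $\{c_j\}$ is a leaf; deleting these $m-k$ leaves leaves a tree on $\{D,\tH_1,\dots,\tH_k\}$ whose edge set is $\{e(a)\mid a\in D\}$, every edge incident to $D$, hence a star with centre $D$, so $|D\cap\tH_q|=1$ for all $q$. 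Writing $D\cap\tH_q=\{h_q\}$, a short induction along $\cH$ gives $\cl(h_1,\dots,h_q)=H_q$, so $D=\{h_1,\dots,h_k\}$ is a basis of $M$, and $\Gamma_{\cF,\cH}$ is a ``double star'': $D$ is joined to each $\tH_q$, and each $\tH_q$ carries the pendant leaves $\{c_j\}$ with $c_j\in\tH_q$.

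It then remains to read off, from conditions (2) and (3) of \Cref{prop:flat+cone+intersect}, that $D$ is an nbc‑basis of $M$ and that $\cF$ and $\cH$ are both determined by $D$, so that $(\cF,\cH)\mapsto D$ is injective into the nbc‑bases of $M$; combined with the previous paragraph this forces the number of intersecting arboreal pairs to equal $\nbc M$. In the double star every path between two $\cH$‑vertices has length $2$ and every path between two $\cF$‑vertices has length $2$ or $4$, so one can track exactly which edge of each such path runs from an $\cF$‑vertex to an $\cH$‑vertex. I expect: condition (3) on the paths $\tH_q\to\tH_{q'}$ forces $h_1\succ\cdots\succ h_k$, whence $\cH=\cF(D)$; condition (2) on the paths $\{c_j\}\to D$ forces $h_q=\min(\tH_q)$ for all $q$, whence $h_q=\min(H_q)$, so $D$ is an nbc‑basis of $M$ by \Cref{lem:nbcBasisToFlat} and $1\in D$; and condition (2) on the paths $\{c_i\}\to\{c_j\}$ forces $c_1\succ\cdots\succ c_{m-k}$, whence $\cF=\cF((E\setminus D)\cup\{1\})$. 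The main obstacle is precisely this last step: there is no single black‑box statement to invoke, and one must carefully run the (short, but several) cases of path shapes in $\Gamma_{\cF,\cH}$, keeping track of edge orientations, to see that conditions (1)--(3) of \Cref{prop:flat+cone+intersect} are equivalent to the three order conditions above. Once this is done, the two inequalities give $\#\{\text{intersecting arboreal pairs}\}=\nbc M$, which by the first paragraph is \eqref{eq:nbc+degree}.
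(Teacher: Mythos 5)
Your proposal is correct and follows essentially the same route as the paper: \Cref{prop:intersecting+arboreal+pair+invariant} turns the left-hand side into a count of intersecting arboreal pairs of $(U_{m,m-k+1},M)$, the forward direction is \Cref{lem:nbc+basis+intersection} together with the observation that the nbc-bases of the uniform matroid are exactly the $(m-k+1)$-subsets containing $\min(E)$, and the reverse direction analyses the double-star shape of $\Gamma_{\cF,\cH}$ to extract a basis $D$ of $M$. The short case analysis you defer is exactly what the paper carries out, and your expected conclusions (condition (3) forces $h_1\succ\cdots\succ h_k$, condition (2) on the two-edge paths $\{c_j\}\to D$ forces $h_q=\min(\tH_q)$, and condition (2) on paths between singletons forces $c_1\succ\cdots\succ c_{m-k}$, with $1\in D$ coming from $D$ being an nbc-basis) are precisely the facts it verifies, so there is no gap in substance.
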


\begin{proof}
    Let $[m]$ be the ground set of $M$. Choose any generic and rapidly increasing vector $\alpha \in \mathbb{R}^m$ such that  $\alpha_1 < \cdots < \alpha_m$.
    By \Cref{prop:intersecting+arboreal+pair+invariant},
    the tropical intersection product in \eqref{eq:nbc+degree} is equal to the number of intersecting arboreal pairs of $U_{m,m-k+1},M$.
    It suffices to show that there exists a 1-to-1 map between the nbc-bases of $M$ and the intersecting arboreal pairs of $U_{m,m-k+1},M$.
    We do so by proving the following:
    the pair $(\mathcal{F},\mathcal{H})$ is an intersecting arboreal pair of $U_{m,m-k+1},M$ if and only if $\cH = \cF(B)$ for some nbc-basis $B$ of $M$. Given a basis $B$ of $M$, we write $\cH = \cF(B)$ and $\cF = \cF(B')$ for the chains of flats defined by $B$ and $B' := ([m] \setminus B) \cup \{1\}$.

    Assume that $B$ is an nbc-basis of $M$ and let $\cH = \cF(B)$ and $\cF = \cF(B')$ be the chains constructed above.
    Since $|B'| = m - k +1$,
    it is a basis of $U_{m,m-k+1}$.
    Suppose that $B'$ contains a broken circuit $C \setminus \{\min(C)\}$ for some circuit $C$ of $U_{m, m-k+1}$. Since the circuits of $U_{m, m-k+1}$ have size $m-k+2$, it follows that $B' = C \setminus \min(C)$. Since $1 \in B'$, we must have $1 \succ \min(C)$, which is a contradiction. So $B'$ is an nbc-basis, and by \Cref{lem:nbc+basis+intersection}, we have that $(\mathcal{F},\mathcal{H})$ is an intersecting arboreal pair.

    Now suppose that $(\mathcal{F},\mathcal{H})$ is an intersecting arboreal pair.
    As the flats of $U_{m,m-k+1}$ are all subsets of $[m]$ of size at most $(m-k)$ of $[m]$, as well as $[m]$, we have
    \[
        \widetilde{F}_i = F_i \setminus F_{i-1} = \{a_i\} \quad \text{for all } 1 \leq i \leq m-k \, , \quad \widetilde{F}_{m-k+1} = [m] \setminus \{a_1, \dots, a_{m-k}\} \,
    \]
    for some $a_1,\ldots,a_{m-k} \in [m]$.
    No two vertices of $\Gamma_{\cF, \cH}$ can be connected by two or more edges since $\Gamma_{\cF, \cH}$ is a tree.
    As $|\widetilde{F}_{m-k+1}| = k$ and there are exactly $k$ vertices on the $\cH$-part of the graph $\Gamma_{\cF, \cH}$, there is an edge between $\widetilde{F}_{m-k+1}$ and $\widetilde{H}_{j}$ for each $j \in [k]$.
    This implies $|\widetilde{F}_{m-k+1} \cap \widetilde{H}_{j}| = 1$ for each $j \in [k]$, and we write $b_j \in \widetilde{H}_{j}$ for the unique element of $\widetilde{F}_{m-k+1} \cap \widetilde{H}_{j}$.
    We define $B = \{b_1, \dots, b_k\}$ and $B' = ([m]\setminus B) \cup \{1\}=\{a_1,\ldots,a_{m-k},1\}$.

    Next we show that $\cH = \cF(B)$.
    Since $r(H_{i-1} \cup b_i) = r(H_i) = r(H_{i-1})+1$ for each $i \in [k]$ and $\cH$ is maximal (and hence contains $k+1$ flats),
    we have that $B$ is a basis.
    For each $j < \ell$, the vertices $\widetilde{H}_j, \widetilde{H}_\ell$ are both adjacent to $\widetilde{F}_{m-k+1}$. Hence, the unique path $\widetilde{H}_j \rightarrow \widetilde{H}_\ell$ consists of the two edges $\{e(b_j), e(b_\ell)\}$. By $\cH$-maximality, we deduce that $b_j \succ b_\ell$ and so $b_1 \succ \cdots \succ b_k$. Therefore $\cH = \cF(B)$.

    We now show that $B$ is an nbc-basis.
    Choose any $j \in [k]$.
    If $a_i \in \widetilde{H}_j$, then, by the argument in the previous paragraph, the unique path $\widetilde{F}_i \rightarrow \widetilde{F}_{m-k+1}$ is $e(a_i), e(b_j)$.
    So, by $\cF$-maximality, we have $a_i \succ b_j$.
    Since each element of $\widetilde H_j$, except $b_j$, is given by $a_i$ for some $i$, we deduce that $b_j = \min(H_j)$ for each $j \in [k]$.
    Hence $B$ is an nbc-basis by \Cref{lem:nbcBasisToFlat}.

    Finally, we show that $\cF = \cF(B')$.
    Since $B$ contains 1 (as all nbc-bases must),
    the set $B'$ has exactly $m-k+1$ elements and $1 \in \widetilde{F}_{m-k+1}$.
    By the above, the unique path $\widetilde{F}_i \rightarrow \widetilde{F}_{m-k+1}$ begins with the largest edge, namely $e(a_i)$.
    The unique path from $\widetilde{F}_i \rightarrow \widetilde{F}_{j}$ is the symmetric difference of the paths from $\widetilde{F}_i \rightarrow \widetilde{F}_{m-k+1}$ and $\widetilde{F}_{j} \rightarrow \widetilde{F}_{m-k+1}$.
    If $1 \leq i < j \leq m-k$, then, by $\cF$-maximality, we have $a_i \succ a_j$, hence $a_1 \succ \cdots \succ a_{m-k} \succ 1$.
    So $\cF = \cF(B')$.

    We have shown that if $(\cF, \cH)$ is an intersecting arboreal pair then there exists an nbc-basis $B$ such that $\cH = \cF(B)$ and $\cF = \cF(B')$ where $B' = ([m] \setminus B)\cup\{1\}$. This finishes the proof.
\end{proof}

We prove some results that bound the tropical intersection product.
Given a weighted polyhedral complex $\Sigma$ and a point $w$ in its support, the \emph{star} of $\Sigma$ at $w$ is the weighted polyhedral complex
\[
\mathrm{star}_w(\Sigma) = \bigcup_{w \in \sigma \in \Sigma} \sigma_w \, , \quad \sigma_w := \left\{\lambda(v - w) \mid v \in \sigma \, , \, \lambda \geq 0\right\} \, ,
\]
where the cone $\sigma_w$ has multiplicity $\mult_\Sigma(\sigma)$.
Intuitively, $\mathrm{star}_w(\Sigma)$ is $\Sigma$ viewed locally around $w$.

\begin{lemma}
  \label{lem:IntersectionNumberStar}
  Let $\Sigma_1,\Sigma_2$ be two balanced polyhedral complexes of complementary dimension in $\RR^n$ and $w\in\RR^n$ in the support of $\Sigma_1$.  Then
  \[\Sigma_1\cdot\Sigma_2\geq\mathrm{star}_w(\Sigma_1)\cdot\Sigma_2.\]
\end{lemma}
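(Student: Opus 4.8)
The plan is to reduce to the case $w=0$ and then to degenerate $\Sigma_1$ to its tangent cone at the origin by an honest rescaling, for which the intersection number is unchanged. First I would use \Cref{lem:tropicalIntersectionProductTranslationInvariant} to translate both $\Sigma_1$ and $\Sigma_2$ by $-w$; this changes neither side, since $\mathrm{star}_0(\Sigma_1-w)=\mathrm{star}_w(\Sigma_1)$, so we may assume $0\in|\Sigma_1|$. The workhorse will be the following scale invariance of the tropical intersection product: for every $t>0$ one has $(t\cdot\Sigma_1)\cdot\Sigma_2=\Sigma_1\cdot\Sigma_2$. I would prove this exactly as \Cref{lem:tropicalIntersectionProductTranslationInvariant}: the function $s\mapsto\Sigma_1\cdot(s\cdot\Sigma_2)$ on $(0,\infty)$ is locally constant, because over a compact range of $s$ a rescaling of $\Sigma_2$ is a bounded deformation that neither creates nor destroys transverse intersection points, hence it is constant; then applying the automorphism $x\mapsto t^{-1}x$ of $\RR^n$, which fixes each lattice $N_\tau=\ZZ^n\cap\Span(\tau-u)$ and therefore commutes with stable intersection, transfers the rescaling from $\Sigma_2$ to $\Sigma_1$.

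Next I would set up the local comparison. Since $0\in|\Sigma_1|$, there is $r_0>0$ with $\Sigma_1\cap B(0,r_0)=\mathrm{star}_0(\Sigma_1)\cap B(0,r_0)$ as weighted polyhedral complexes: on a small ball each cell through $0$ coincides with its tangent cone, and the finitely many cells avoiding $0$ are bounded away from it. Because $\mathrm{star}_0(\Sigma_1)$ is a fan (invariant under positive scaling), this upgrades to $(t\cdot\Sigma_1)\cap B(0,tr_0)=\mathrm{star}_0(\Sigma_1)\cap B(0,tr_0)$ for all $t>0$. On the other side, $\mathrm{star}_0(\Sigma_1)$ and $\Sigma_2$ have complementary dimension, so the stable intersection $\mathrm{star}_0(\Sigma_1)\wedge\Sigma_2$ is a finite set of points, contained in some fixed ball $B(0,R)$ independent of $t$, together with the cell data of $\mathrm{star}_0(\Sigma_1)$ and $\Sigma_2$ in a neighbourhood of those points.

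Finally I would fix $t$ with $tr_0>R$ and conclude. Every $p\in\mathrm{star}_0(\Sigma_1)\wedge\Sigma_2$ lies in $B(0,tr_0)$, where $t\cdot\Sigma_1$ and $\mathrm{star}_0(\Sigma_1)$ agree as weighted complexes; since both the membership $p\in\Sigma\wedge\Sigma_2$ and the multiplicity $\mult_{\Sigma\wedge\Sigma_2}(p)$ in \Cref{def:stableIntersection} depend only on the maximal cells of $\Sigma$ meeting a neighbourhood of $p$ and their weights, it follows that $p\in(t\cdot\Sigma_1)\wedge\Sigma_2$ with the same multiplicity. As all stable‑intersection multiplicities are nonnegative, summing over the points of $(t\cdot\Sigma_1)\wedge\Sigma_2$ gives
\[
\Sigma_1\cdot\Sigma_2 \;=\; (t\cdot\Sigma_1)\cdot\Sigma_2 \;\geq\; \mathrm{star}_0(\Sigma_1)\cdot\Sigma_2 \;=\; \mathrm{star}_w(\Sigma_1)\cdot\Sigma_2,
\]
the surplus coming precisely from points of $(t\cdot\Sigma_1)\wedge\Sigma_2$ outside $B(0,tr_0)$, i.e.\ on the far‑away cells of $t\cdot\Sigma_1$ pushed outward by the rescaling. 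The main obstacle, and the only step that is not essentially bookkeeping, is the scale‑invariance claim: one must argue carefully that under a rescaling ranging over a compact interval no intersection point escapes to infinity, so that the count stays locally constant — this is the rescaling analogue of the perturbation‑invariance already invoked in the proof of \Cref{lem:tropicalIntersectionProductTranslationInvariant}, and I would phrase it in the same way.
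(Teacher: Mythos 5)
This is correct and essentially the paper's own argument: reduce to $w=0$, establish invariance of the tropical intersection product under rescaling $\Sigma_1$ (the paper justifies this via a local-translation observation and \Cref{lem:tropicalIntersectionProductTranslationInvariant}, at about the same level of informality as your locally-constant-in-$s$ claim, whose ``bounded deformation'' phrasing is not literally accurate for unbounded complexes but matches the paper's level of detail), and then compare $t\cdot\Sigma_1$ with $\mathrm{star}_0(\Sigma_1)$ on a ball whose radius grows linearly in $t$. The only difference is the finish: you fix a single sufficiently large $t$ and use positivity of stable-intersection multiplicities to conclude, whereas the paper lets $t\to\infty$ and argues that points of $(t\cdot\Sigma_1)\wedge\Sigma_2$ either land in that ball, becoming points of $\mathrm{star}_0(\Sigma_1)\wedge\Sigma_2$, or diverge to infinity --- your fixed-$t$ variant is, if anything, slightly cleaner.
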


\begin{proof}
  Without loss of generality, we may assume that $w=\mathbf{0} = (0,\ldots,0)$.
  For $t>0$, let $t\cdot\Sigma_1$ be the balanced polyhedral complex with polyhedra $t\cdot\sigma$, where $\sigma\in\Sigma_1$ and $t\cdot (\ldots)$ denotes linear scaling by $t$, and multiplicities $\mult_{t\cdot\Sigma}(t\cdot\sigma)=\mult_{\Sigma_1}(\sigma)$.  Observe that locally the support of the scaled complex $t \cdot \Sigma_1$ is given by a translation of the support of $\Sigma_1$. More precisely,
  for each point $x$ contained in the support of $t \cdot \Sigma_1$ there exists an open set $U \subseteq \RR^n$ and a vector $v \in \RR^n$ such that $x \in U$ and $U \cap (t \cdot \Sigma_1) = v + U \cap \Sigma_1$ (as sets).
  So, by \cref{lem:tropicalIntersectionProductTranslationInvariant}, we have $(t\cdot \Sigma_1)\cdot\Sigma_2=\Sigma_1\cdot\Sigma_2$ for all $t>0$. Note that $t\cdot \Sigma_1$ converges point-wise to $\mathrm{star}_\mathbf{0}(\Sigma_1)$ as $t$ goes to $\infty$ and the stable intersection points of $(t\cdot\Sigma_1)\wedge\Sigma_2$ vary continuously in $t$. We now consider what happens to these intersection points as $t$ increases.

  Let 
  \begin{equation*}
       s_t\coloneqq\min\{\|u\|\mid u\in \sigma, ~ \sigma\in t\cdot\Sigma_1, ~\mathbf{0}\notin\sigma\}
  \end{equation*}
 denote the minimal distance between the point $\mathbf{0}$ and all polyhedra of $t\cdot \Sigma_1$ not containing $\mathbf{0}$, and let $B_t$ denote the ball around $\mathbf{0}$ of radius $s_t$.  Then $t\cdot \Sigma_1$ and $(t+t')\cdot\Sigma_1$ coincide inside $B_t$ for all $t'>0$, and thus $x\in (t\cdot \Sigma_1\wedge\Sigma_2)\cap B_t$ implies $x\in\mathrm{star}_\mathbf{0}(\Sigma_1)\wedge\Sigma_2$.  
 Moreover, $B_t$ converges to $\RR^n$ as $t$ goes to infinity.
  Then, as $t$ goes to infinity, any intersection point of $(t\cdot\Sigma_1)\wedge\Sigma_2$ either falls into $B_t$, becoming an intersection point of $\mathrm{star}_\mathbf{0}(\Sigma_1)\cdot\Sigma_2$, or diverges to infinity. This shows the claim.
\end{proof}

To prove \Cref{thm:nbc+bases+upper+bound}, we need to make use of an alternative characterisation of tropical linear spaces detailed in \cite{Speyer:2008} and \cite[Section 4.4]{MaclaganSturmfels15}.
Given some $p \in (\RR \cup \{\infty\})^{\binom{[n]}{k}}$ and $w \in \RR^n$, define
\begin{align} \label{eq:initial+matroid}
\cB(M_{p,w}) := \left\{ B' \in \binom{[n]}{k} \, \Big| \, \min_{B \in \binom{[n]}{k}} \bigg(p_B - \sum_{i \in B} w_i  \bigg) \text{ attained at } B' \right\} .    
\end{align}
We say $p \in (\RR \cup \{\infty\})^{\binom{[n]}{k}}$ is a \emph{tropical Pl\"ucker vector} if the set system $\cB(M_{p,w})$ forms the bases of a matroid $M_{p,w}$ for all $w \in \RR^n$.
We define its associated \emph{tropical linear space} to be
\[
L_p = \left\{w \in \RR^n \mid M_{p,w} \text{ loopless matroid}\right\} .
\]
$L_p$ is a balanced polyhedral complex where each maximal cell has multiplicity one.

This aligns with our existing notion of a (realisable) tropical linear space: if $I \subseteq K[x_1^\pm, \dots, x_n^\pm]$ is a $k$-dimensional linear ideal, there exists a unique tropical Pl\"ucker vector $p \in (\RR \cup \{\infty\})^{\binom{[n]}{k}}$ up to addition of scalars such that $\Trop(I) = L_p$.
The converse is not true: there exist tropical linear spaces $L_p$ that are not realisable as the tropicalisation of a linear ideal.
For example, if $M$ is a non-realisable matroid then \cref{lem:bergmanFan} implies there is no linear ideal $I$ such that $\Trop(I)$ is the Bergman fan of $M$.

In addition, we require the notion of a \textit{recession fan}. Suppose that $\sigma = \{ x \in \RR^n \mid  Ax \le b\}$ is a polyhedron given by its half-space description for some matrix $A \in \RR^{d \times n}$ and $b \in \RR^d$. Then the recession cone of $\sigma$ is defined as $\rec(\sigma) = \{x \in \RR^n \mid Ax \le \mathbf{0} \}$. More generally, if $\Sigma$ is a polyhedral complex, then $\rec(\Sigma)$ is the union of all cones $\rec(\sigma)$ taken over polyhedra $\sigma \in \Sigma$. 
The set $\rec(\Sigma)$ 
can always be given the structure of a polyhedral fan. 
Moreover, if $\Sigma'$ is another polyhedral complex with the same support as $\Sigma$, then the recession fans $\rec(\Sigma)$ and $\rec(\Sigma')$ have the same support, hence the recession fan is well-defined on supports. This allows us to talk about \textit{the} recession fan of a tropical variety $X$, by which we mean the well-defined unique support of the recession fan given by some choice a polyhedral complex structure on $X$. The multiplicities given by $\mult_{\rec(X)}(\sigma)=\sum_{\rec(\sigma')=\sigma}\mult_X(\sigma')$ makes it a balanced fan.

\begin{lemma}\label{lem:degrees}
    Let $\Sigma$ be a pure balanced polyhedral complex in $\RR^n$ of dimension $1 \leq s \leq n-1$ and let $L_q$ a tropical linear space with tropical Pl\"{u}cker vector $q\in \RR^{\binom{[n]}{n-s}}$. Then $\Sigma \cdot L_q = \Sigma \cdot \Trop(U_{n,n-s})$.
\end{lemma}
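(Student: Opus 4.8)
The plan is to recognise $\Trop(U_{n,n-s})$ as the recession fan of the tropical linear space $L_q$, and then to show that the tropical intersection product of $\Sigma$ with a tropical linear space depends on the latter only through its recession fan. First I would observe that $\Trop(U_{n,n-s})$ is precisely the tropical linear space $L_{\mathbf 0}$ attached to the identically-zero tropical Pl\"ucker vector $\mathbf 0 \in \RR^{\binom{[n]}{n-s}}$: by \eqref{eq:initial+matroid}, a point $w$ lies in $L_{\mathbf 0}$ exactly when every element of $[n]$ belongs to some $B \in \binom{[n]}{n-s}$ maximising $\sum_{i \in B} w_i$, and an elementary check shows that this holds if and only if the $s+1$ smallest among $w_1,\dots,w_n$ are all equal --- which is the description of $\Trop(U_{n,n-s})$ coming from \Cref{ex:bergman}. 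I would then establish $\rec(L_q) = \Trop(U_{n,n-s})$. The inclusion $\rec(L_q) \subseteq \Trop(U_{n,n-s})$ is immediate from \eqref{eq:initial+matroid}: if $u \notin \Trop(U_{n,n-s})$ then, by the previous sentence, some $i \in [n]$ is a loop of $M_{\mathbf 0, u}$, and since for large $\lambda$ the $\lambda$-term dominates, $i$ is also a loop of $M_{q, w + \lambda u}$ for any $w$ once $\lambda$ is large, so $w + \lambda u \notin L_q$ eventually and $u$ is not a recession direction of $L_q$. For the reverse inclusion, and for the fact that every maximal cone of $\rec(L_q)$ carries multiplicity one, I would invoke the standard identification of tropical linear spaces with polyhedral duals of regular matroid subdivisions of the hypersimplex $\conv\{\Chi_B \mid B \in \binom{[n]}{n-s}\}$: the recession fan of such a dual complex is the normal fan of the hypersimplex, which is $\Trop(U_{n,n-s})$ with all multiplicities one (see \cite{Speyer:2008} and \cite[Section~4.4]{MaclaganSturmfels15}).

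It then remains to prove $\Sigma \cdot L_q = \Sigma \cdot \rec(L_q)$, as the first step identifies the right-hand side with $\Sigma \cdot \Trop(U_{n,n-s})$. The tool is the scaling argument from the proof of \Cref{lem:IntersectionNumberStar}: for $t > 0$ the scaled complex $t \cdot L_q$ is again a tropical linear space --- indeed $t \cdot L_q = L_{tq}$, and $tq$ is again a tropical Pl\"ucker vector --- and since scaling preserves the local structure of a complex up to translation, \Cref{lem:tropicalIntersectionProductTranslationInvariant} gives $\Sigma \cdot (t \cdot L_q) = \Sigma \cdot L_q$ for every $t > 0$. As $t \to 0^+$ the complexes $t \cdot L_q$ converge to $\rec(L_q)$, and arguing exactly as in the proof of \Cref{lem:IntersectionNumberStar} (now scaling by $t \to 0$ rather than $t \to \infty$), every stable intersection point of $\Sigma \wedge (t \cdot L_q)$ either limits to a stable intersection point of $\Sigma \wedge \rec(L_q)$ or diverges to infinity; this yields $\Sigma \cdot L_q \ge \Sigma \cdot \rec(L_q)$. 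For the reverse inequality one must rule out intersection mass escaping to infinity as $t \to 0$, which I would do using that $\rec(L_{tq}) = \Trop(U_{n,n-s})$ for every $t \ge 0$: along each unbounded direction the cells of $t \cdot L_q$ are, outside a region whose diameter shrinks with $t$, translates of the fixed fan $\Trop(U_{n,n-s})$, and a generic translate of that fan meets $\Sigma$ in a bounded set, so the stable intersections $\Sigma \wedge (t \cdot L_q)$ remain in a fixed bounded region uniformly in $t \in (0,1]$; hence the count is continuous at $t = 0$ and $\Sigma \cdot \rec(L_q) = \lim_{t \to 0^+} \Sigma \cdot (t \cdot L_q) = \Sigma \cdot L_q$. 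Combining gives $\Sigma \cdot L_q = \Sigma \cdot \rec(L_q) = \Sigma \cdot \Trop(U_{n,n-s})$.

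The step I expect to be the main obstacle is exactly this last point: the uniform boundedness of the stable intersections as $t \cdot L_q$ degenerates to its recession fan, so that no intersection mass is lost at infinity. If one prefers to sidestep it, an alternative is to pass to recession fans only against a \emph{fixed} fan --- since $\Trop(U_{n,n-s})$ is a fan based at the origin one gets $\Sigma \cdot \Trop(U_{n,n-s}) = \rec(\Sigma) \cdot \Trop(U_{n,n-s})$ by scaling $\Sigma$, and likewise $\rec(\Sigma) \cdot L_q = \rec(\Sigma) \cdot \rec(L_q)$ since $\rec(L_q)$ is a fan --- which reduces the statement to the single identity $\Sigma \cdot L_q = \rec(\Sigma) \cdot L_q$ together with the purely local computation, at the origin, of the intersection number of the complementary-dimensional fans $\rec(\Sigma)$ and $\rec(L_q) = \Trop(U_{n,n-s})$; note, however, that this companion identity is of the same nature and carries a similar boundedness requirement.
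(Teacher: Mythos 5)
Your first step---identifying $\Trop(U_{n,n-s})$ with $L_{\mathbf 0}$ and computing $\rec(L_q)=\Trop(U_{n,n-s})$---matches the paper, which gets this from \cite[Theorem 4.4.5]{MaclaganSturmfels15} since $q$ has only finite entries. The divergence is in the second step: the paper runs no degeneration at all, but instead cites \cite[Theorem 5.7]{AHR:16}, which says that two complexes of codimension complementary to $\Sigma$ with the same recession fan have the same tropical intersection product with $\Sigma$; together with $\rec(L_q)=\rec(\Trop(U_{n,n-s}))=\Trop(U_{n,n-s})$ this finishes the proof in two lines. What you propose is essentially a from-scratch proof of (a special case of) that invariance theorem, and the gap is exactly where you flag it.

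Concretely: scaling gives $\Sigma\cdot(t\cdot L_q)=\Sigma\cdot L_q$, and a limit argument in the spirit of \Cref{lem:IntersectionNumberStar} can only yield the inequality $\Sigma\cdot L_q\geq\Sigma\cdot\rec(L_q)$---and even that is not a verbatim transplant, because the star argument uses that $t\cdot\Sigma_1$ coincides \emph{exactly} with the star inside a ball of growing radius, whereas $t\cdot L_q$ never coincides with $\rec(L_q)$ on any region: its unbounded cells are Minkowski sums $tP+C$ of shrinking polytopes with recession cones, i.e.\ only $O(t)$-translates of individual cones, so identifying limit points as points of $\Sigma\wedge\rec(L_q)$ with the correct multiplicity already needs a continuity-of-stable-intersection argument you do not supply. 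For the reverse inequality you assert that outside a shrinking region $t\cdot L_q$ consists of translates of the fixed fan $\Trop(U_{n,n-s})$ (false as stated, for the same reason) and that a generic translate of that fan meets $\Sigma$ in a bounded set (true for each fixed generic translate, but not by itself a bound uniform over the family $t\in(0,1]$, which is what excluding escape of intersection multiplicity to infinity requires); and boundedness alone would still not rule out multiplicities dropping in the limit. Your fallback reduction via $\rec(\Sigma)$ carries the same burden, as you note. None of this is unfixable---it is precisely the content of the statement that tropical intersection numbers depend only on recession fans---but as written the crucial step is heuristic, and the paper's proof consists of observing that this step is available as the cited theorem of \cite{AHR:16}.
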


\begin{proof}
    As $q \in \RR^{\binom{[n]}{n-s}}$ has only finite values, its recession fan is $\rec(L_q)=\Trop(U_{n,n-s})$ by \cite[Theorem 4.4.5]{MaclaganSturmfels15}, and that $\Trop(U_{n,n-s})=\rec(\Trop(U_{n,n-s}))$ as $\Trop(U_{n,n-s})$ is already a fan. Thus $\rec(L_q)=\rec(\Trop(U_{n,n-s}))$.
    By \cite[Theorem 5.7]{AHR:16}, if $X$ and $Y$ are polyhedral complexes with codimension complementary to that of $\Sigma$, then $\rec(X) = \rec(Y)$ implies that $\Sigma \cdot X = \Sigma \cdot Y$. Thus $\Sigma \cdot L_q = \Sigma \cdot \Trop(U_{n,n-s})$.
\end{proof}

\begin{lemma}\label{lem:Bergman+degen}
    Let $\Sigma$ be a pure balanced polyhedral complex in $\RR^n$ of dimension $1 \leq s \leq n-1$, and let $M$ be a loopless matroid on ground set $[n]$ with $r(M) = n-s$.
    Then
    \[
    \Sigma \cdot \Trop(M) \leq \Sigma \cdot \Trop(U_{n,n-s}) \, .
    \]
\end{lemma}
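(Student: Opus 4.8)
The plan is to realise $\Trop(M)$ as the \emph{star} of a tropical linear space with a finite tropical Plücker vector, and then play \Cref{lem:degrees} and \Cref{lem:IntersectionNumberStar} off against one another. The matroidal input I would establish first is: any loopless matroid $M$ of rank $k=n-s$ on $[n]$ arises as an initial matroid $M_{q,w}$ (in the sense of \eqref{eq:initial+matroid}) of a finite tropical Plücker vector $q\in\RR^{\binom{[n]}{k}}$ at some $w\in\RR^n$. Equivalently, the matroid polytope $P(M)$ occurs as a cell of a regular matroid subdivision of the hypersimplex $\Delta(n,k)$; this is a standard fact (see \cite{Speyer:2008} and \cite[Section 4.4]{MaclaganSturmfels15}), and a concrete witness is the nullity height function $q_B:=k-r_M(B)$, for which $q_B=0$ precisely when $B$ is a basis of $M$, so that $M_{q,\mathbf 0}=M$. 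Since $M$ is loopless, $M_{q,w}$ is loopless, and hence $w$ lies in the support of $L_q$.

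The second ingredient I would invoke is the standard description of the local structure of a tropical linear space: for $w$ in the relative interior of the cell of $L_q$ on which the minimum in \eqref{eq:initial+matroid} is attained exactly along $\cB(M)$, one has $\mathrm{star}_w(L_q)=\Trop(M_{q,w})=\Trop(M)$ as balanced fans, where both sides carry all multiplicities equal to one. Granting these two facts, the lemma follows by a short chain. Since $q$ has only finite entries, $L_q$ is a balanced tropical linear space of dimension $k=n-s$, which is complementary to $\dim\Sigma=s$, so \Cref{lem:degrees} gives
\[
\Sigma\cdot L_q=\Sigma\cdot\Trop(U_{n,n-s}).
\]
On the other hand $w$ lies in the support of $L_q$, so applying \Cref{lem:IntersectionNumberStar} with $\Sigma_1=L_q$ and $\Sigma_2=\Sigma$ yields
\[
\Sigma\cdot L_q\ \geq\ \Sigma\cdot\mathrm{star}_w(L_q)\ =\ \Sigma\cdot\Trop(M).
\]
Chaining the two displays gives $\Sigma\cdot\Trop(M)\leq\Sigma\cdot\Trop(U_{n,n-s})$, as claimed.

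I expect the main obstacle to be the first paragraph: making precise, with the right references, that an arbitrary loopless matroid really does arise as an initial matroid of a \emph{finite} tropical Plücker vector (i.e.\ that $P(M)$ is a cell of a regular matroid subdivision), and that the star of a tropical linear space at a point in such a cell is exactly the Bergman fan of the associated initial matroid. Everything else is bookkeeping — tracking dimensions and balancedness so that \Cref{lem:degrees} and \Cref{lem:IntersectionNumberStar} genuinely apply, and observing that all the multiplicities occurring (on $L_q$, on $\mathrm{star}_w(L_q)$, and on $\Trop(M)$) are one, so that equality of supports is enough to identify the corresponding intersection products.

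One remark on the bookkeeping: in \Cref{lem:IntersectionNumberStar} the inequality is stated for the product with a \emph{fixed} second complex $\Sigma_2$, so it is important that we degenerate the Bergman-fan factor $\Trop(M)\rightsquigarrow L_q$ and keep $\Sigma$ untouched; this is exactly how the lemma will later be used, with $\Sigma$ a codimension-one slice of $-\Trop(M_G)$, to deduce \Cref{thm:nbc+bases+upper+bound} from \Cref{prop:nbc+degree}.
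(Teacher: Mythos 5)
Your proposal is correct and follows essentially the same route as the paper: the same height function $q_B = r_M([n]) - r_M(B)$ (your nullity witness), \Cref{lem:degrees} to identify $\Sigma\cdot L_q$ with $\Sigma\cdot\Trop(U_{n,n-s})$, and \Cref{lem:IntersectionNumberStar} applied to $\mathrm{star}_{\mathbf 0}(L_q)=\Trop(M)$. The two facts you flag as the main obstacle are handled in the paper by citing \cite[Lemma 27]{js17} for $q$ being a tropical Pl\"ucker vector and \cite[Corollary 4.4.8]{MaclaganSturmfels15} for the star being the Bergman fan of the initial matroid.
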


\begin{proof}
Throughout the proof, we denote by $\mathbf 0 = (0,0, \dots, 0)$ the all-zeros vector of appropriate length. We deform $\Trop(U_{n,n-s})$ to the tropical linear space $L_q$ described by the vector
    \[
    q \in \RR^{\binom{[n]}{n-s}} 
    \quad \text{with} \quad
    q_B = r_M([n]) - r_M(B) \quad \text{for each } B \in \binom{[n]}{n-s},
    \]
    where $r_M$ denotes the rank function of the matroid $M$.
    It is a tropical Pl\"ucker vector by \cite[Lemma 27]{js17}, hence $L_q$ is a well-defined tropical linear space.
    By \Cref{lem:degrees}, we see that $\Sigma \cdot L_q = \Sigma \cdot \Trop(U_{n,n-s})$. 
    As $q_B \geq 0$ with equality if and only if $B$ a basis of $M$, 
    we have $M_{q,\mathbf 0} = M$, which is a loopless matroid, hence $\mathbf{0} \in L_q$. By \cite[Corollary 4.4.8]{MaclaganSturmfels15}, the star $\mathrm{star}_{\mathbf{0}}(L_q)$ is equal to $\Trop(M)$.
    By \Cref{lem:IntersectionNumberStar}, we have $\Sigma \cdot \mathrm{star}_{\mathbf{0}}(L_q) \leq \Sigma \cdot L_q$. Putting this together gives 
    \[
    \Sigma \cdot \Trop(M) =\Sigma \cdot \mathrm{star}_{\mathbf{0}}(L_q) \leq \Sigma \cdot L_q = \Sigma \cdot \Trop(U_{n,n-s}) \, .\qedhere
    \]
\end{proof}

\begin{proof}[Proof of \Cref{thm:nbc+bases+upper+bound}]
    Fix an edge $\epsilon \in E$.
    We first note that
    \begin{equation*}
        (-\Trop(M_G))\wedge\Trop(y_{\epsilon} - 1) = -\Trop(M_G)|_{y_{\epsilon} = 0}:
    \end{equation*}
    the proof of this is identical to that given in the proof of \Cref{prop:intersecting+arboreal+pair+invariant}.
    We now have that
    \begin{equation*}
        \begin{array}[b]{rcl}
            2c_2(G) &\overset{\text{Thm. }\ref{thm:main}}=& (-\Trop(M_G)|_{y_{\epsilon} = 0}) \cdot (\Trop(M_G)) \\
            &\overset{\text{Lem. \ref{lem:Bergman+degen}}}\leq& (-\Trop(M_G)|_{y_{\epsilon} = 0}) \cdot (\Trop(U_{2n-3,n-1}) \\[2mm]
            &=& (-\Trop(M_G)) \cdot \Trop(U_{2n-3,n-1}) \cdot \Trop(y_{\epsilon} - 1) \\
            &\overset{\text{Prop. \ref{prop:nbc+degree}}}=& \nbc{M_G}.
        \end{array}\qedhere
    \end{equation*}
\end{proof}

\begin{example}\label{example: nbc bases K4-e}
Consider the graph $G$ in \Cref{example: intersecting arboreal pairs} and \Cref{fig:intersecting_arboreal_pairs} and order the edges $1 > 2 > 3 > 4 > 5$.
The broken circuits of $G$ under this ordering are $\{12,34,123\}$.
Enumerating through all spanning trees yields that there are four nbc-bases
$\{135, 145, 235, 245\}$.
In this case, \Cref{thm:nbc+bases+upper+bound} states that $c_2(G) \leq 2$.
In fact, the realisation number of $G$ is $c_2(G) = 2$ and so the nbc-bound is equal to the 2-realisation number.
\end{example}

\begin{figure}[t]
\centering
\includegraphics[width=0.3\textwidth]{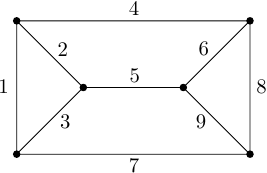}
\caption{The 3-prism, with edges labelled for \Cref{example: nbc bases prism}.}
\label{fig:prism}
\end{figure}

\begin{example}\label{example: nbc bases prism}
The $3$-prism is the graph $G = (V, E)$ displayed in \Cref{fig:prism}. 
We order the edges $1 > 2 > \cdots > 9$. The broken circuits of $G$ are given by
\[
    12,\, 68, \, 147, \, 245, \, 357, \,
    1257,\, 1345,\, 1467,\, 2347,\, 2458,\, 3567,\, 12567,\, 13458,\, 23467.
\]
For instance, the edges 
$6$, $8$, $9$ form a cycle. Since $9$ is the minimum edge label, we have that $68$ is a broken circuit. The graphic matroid of $G$ has $75$ bases, of which $26$ are nbc-bases:
\[
\begin{gathered}
    13469, 13489, 13569, 13589,
    13679, 13789, 14569, 14589,
    15679, 15789, 23469, 23489, 23569, \\
    23589, 23679, 23789, 24679,
    24789, 25679, 25789, 34569,
    34589, 34679, 34789, 45679, 45789.
\end{gathered}
\]
From \Cref{thm:nbc+bases+upper+bound}, we can deduce that $c_2(G) \leq 13$.
The actual realisation number of $G$ is $c_2(G) =12$, and so the number of nbc-bases gives a strict upper bound on the realisation number.
\end{example}

As a corollary of \Cref{thm:nbc+bases+upper+bound}, we note that the number of nbc-bases of $M_G$ is closely connected to a number of other graph and matroid invariants.
We close this subsection by recalling them and stating the bound on the realisation number in terms of them.
For further details and proofs of the following claims, we refer the reader to \cite{monaghan2022handbook}.

Let $M$ be a matroid on ground set $E$ and rank function $r$.
The \emph{Tutte polynomial} of $M$ is defined as
\[
T(M; x, y) := \sum_{A \subseteq E} (x-1)^{r(E) - r(A)}(y-1)^{|A| - r(A)} \, .
\]
The Tutte polynomial is the universal matroid invariant for matroids under deletion and contraction.
Moreover, we can obtain the number of nbc-bases as the evaluation of the Tutte polynomial $\nbc M = T(M; 1, 0)$.

The \emph{characteristic polynomial} of $M$ is defined as
\[
\Chi_M(\lambda) := \sum_{A \subseteq E} (-1)^{|A|} \lambda^{r(E) - r(A)} = (-1)^{r(E)} T(M; 1-\lambda, 0) \, .
\]
In particular, by evaluating the characteristic polynomial at zero we deduce that $\nbc M = (-1)^{r(E)}\Chi_M(0) = |\Chi_M(0)|$ is the absolute value of the constant term of $\Chi_M(0)$.

The \emph{chromatic polynomial} $P(G, \lambda)$ of a graph $G$ is the unique polynomial whose evaluation at any integer $k \geq 0$ is the number of $k$-colourings of $G$.
When $G$ is connected, the chromatic polynomial of $G$ is related to the characteristic polynomial of $M_G$ by $P(G; \lambda) = \lambda \cdot \Chi_M(\lambda)$.
As such, $\nbc M$ is the absolute value of the coefficient of $\lambda$ in $P(G; \lambda)$.

\begin{corollary} \label{cor:alt+bound}
Let $G$ be a minimally 2-rigid graph with graphic matroid $M_G$.
Then $2c_2(G)$ is bounded above by the following equivalent values:
\begin{enumerate}
\item the value of the Tutte polynomial evaluation $T(M_G; 1, 0)$,
\item the constant term (up to sign) of the characteristic polynomial $\Chi_{M_G}(\lambda)$,
\item the coefficient of the linear term (up to sign) of the chromatic polynomial $P(G; \lambda)$ of $G$.
\end{enumerate}
\end{corollary}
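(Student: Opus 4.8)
The plan is to read off all three bounds from \Cref{thm:nbc+bases+upper+bound}, which already supplies $2c_2(G)\le\nbc{M_G}$, by recording the well-known identities that express $\nbc{M_G}$ in terms of the Tutte, characteristic, and chromatic polynomials. Concretely, it suffices to show that the three quantities in (1)--(3) all equal $\nbc{M_G}$; the asserted equivalence of the bounds, and the bound itself, then follow immediately by combining this with \Cref{thm:nbc+bases+upper+bound}.

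For (1) I would cite Whitney's broken-circuit theorem in the form $\nbc M = T(M;1,0)$, valid for every matroid $M$ (see \cite{monaghan2022handbook}, or \cite[Theorem 7.4.6]{Bjorner:1992}). For (2), substituting $x=1-\lambda$ and $y=0$ into the definition of the Tutte polynomial and comparing term by term with the definition of $\Chi_M$ gives $\Chi_M(\lambda)=(-1)^{r(E)}T(M;1-\lambda,0)$; evaluating at $\lambda=0$ then yields $|\Chi_M(0)|=T(M;1,0)=\nbc M$, so the constant term of $\Chi_{M_G}$ agrees with $\nbc{M_G}$ up to sign. For (3), a minimally $2$-rigid graph with $n\ge 3$ vertices is in particular connected, so the standard relation $P(G;\lambda)=\lambda\cdot\Chi_{M_G}(\lambda)$ holds; hence the coefficient of the linear term of $P(G;\lambda)$ is $\Chi_{M_G}(0)$, whose absolute value is $\nbc{M_G}$ by (2).

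There is no real obstacle here: all of the arithmetic is routine sign bookkeeping in the Tutte-to-characteristic and characteristic-to-chromatic substitutions, and all the substantive content is already contained in \Cref{thm:nbc+bases+upper+bound} and the cited combinatorial identities. One should only take care to observe that minimal $2$-rigidity forces $G$ to be connected, so that item (3) applies.
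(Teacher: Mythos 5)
Your proposal is correct and follows essentially the same route as the paper, which likewise deduces the corollary by combining \Cref{thm:nbc+bases+upper+bound} with the standard identities $\nbc{M_G}=T(M_G;1,0)=|\Chi_{M_G}(0)|$ and, for connected $G$, $P(G;\lambda)=\lambda\cdot\Chi_{M_G}(\lambda)$. Your extra remark that minimal $2$-rigidity forces connectedness (so item (3) applies) is a correct and worthwhile point of care.
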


There are additional characterisations of nbc-bases that one can derive in terms of acyclic orientations of $G$ with a unique source, or in terms of maximal $G$-parking functions of the graph.
We refer the interested reader to~\cite[Theorem 4.1]{BCT:2010} for these.

\subsection{Realisation bases}
Jackson and Owen conjectured the following lower bound for the realisation number.
\begin{conjecture}[\cite{JO19}] \label{conj:lowerbound}
Every minimally 2-rigid graph $G$ with $n$ vertices satisfies $c_2(G) \geq 2^{n-3}$.
\end{conjecture}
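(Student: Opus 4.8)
The natural line of attack is induction on the number of vertices $n$, exploiting the recursive structure of minimally $2$-rigid graphs: by Henneberg's theorem (a consequence of the Pollaczek--Geiringer characterisation), every minimally $2$-rigid graph with $n\geq 3$ vertices arises from a minimally $2$-rigid graph on $n-1$ vertices by either a \emph{$0$-extension} (adjoin a vertex of degree $2$) or a \emph{$1$-extension} (delete an edge $uv$ and adjoin a vertex $w$ joined to $u$, $v$ and a third vertex $x$). The base cases $c_2(K_2)=c_2(K_3)=1$ clear the bound for $n\leq 3$, and the whole conjecture would follow from the single claim that $c_2(G')\geq 2\,c_2(G)$ whenever $G'$ is obtained from $G$ by one Henneberg move.

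The $0$-extension case is essentially free, and there are two interchangeable routes. Geometrically, given a generic framework on $G$, the new vertex must lie at the intersection of two circles centred at its neighbours, and over $\CC$ this contributes a factor of exactly $2$, so $c_2(G')=2\,c_2(G)$. Purely combinatorially, one checks using \Cref{thm:arboreal} and \Cref{prop:flat+cone+intersect} that $M_{G'}$ is obtained from $M_G$ by adjoining the two new edges $e_1,e_2$ (which appear in circuits only together with the $a$--$b$ paths of $G$), and that every intersecting arboreal pair $(\cF,\cH)$ of $M_G$ lifts to exactly two intersecting arboreal pairs of $M_{G'}$ according to which of $e_1,e_2$ is inserted into the associated chains of flats; the $\cF$- and $\cH$-maximality conditions survive because $e_1,e_2$ may be taken to be the two largest elements in the total order. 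Either argument is routine.

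The $1$-extension case is the real obstacle, and it is precisely where \Cref{conj:lowerbound} is genuinely open: there is no elementary multiplicative relation between $c_2(G)$ and $c_2(G')$ when $G'$ is a $1$-extension at $uv$ with apex $x$. The plan I would follow is to work through \Cref{thm:arboreal} and construct an injection from two disjoint copies of the intersecting arboreal pairs of $M_G$ (equivalently, via \Cref{lem:nbc+basis+intersection}, of the \emph{realisation bases}) into the intersecting arboreal pairs of $M_{G'}$. Concretely, for a realisation basis $B$ of $M_G$ --- so that $B$ and $B':=(E(G)\setminus B)\cup\{\min\}$ are both nbc-bases --- one would produce two bases of $M_{G'}$ by a case distinction on whether $uv\in B$, and, when $uv\notin B$, on which side of the fundamental cut of $uv$ the apex $x$ lies, extending $B$ by a carefully chosen one- or two-element subset of $\{wu,wv,wx\}$. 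The content is then to verify, in each case, that both resulting bases and their complements-plus-minimum remain nbc-bases, equivalently that the $\cF$- and $\cH$-maximality of \Cref{prop:flat+cone+intersect} is preserved under the extension; this is a finite but delicate check, and it is the step I expect to dominate the proof.

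Assembling these pieces would give $c_2(G')\geq 2\,c_2(G)$ in all cases, and induction from the base cases then yields $c_2(G)\geq 2^{n-3}$, proving \Cref{conj:lowerbound}. The fragile point --- the one place where a genuinely new idea may be required --- is the assertion that \emph{every} $1$-extension at least doubles the realisation number; a sensible fallback is to prove only the weaker, unconditional bound one obtains by counting realisation bases directly (which still gives an exponential lower bound, but with a smaller base) and to isolate the residual gap as the combinatorial heart of the conjecture.
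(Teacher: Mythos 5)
There is a genuine gap, and it is worth being precise about its nature: the statement you are proving is stated in the paper as \Cref{conj:lowerbound}, an open conjecture of Jackson and Owen, and the paper does not prove it. What the paper does is derive the weaker, ordering-dependent lower bound $2c_2(G) \geq \#\{\text{realisation bases of } (M_G,\prec)\}$ (\Cref{cor:lower+bound}), and it explicitly exhibits in \Cref{ex:realisation+basis+below+conj} that this bound can fall strictly below $2^{n-3}$: for $K_{3,3}$ the best ordering yields $14$ realisation bases, i.e.\ $c_2 \geq 7$, while the conjectured bound is $8$. Your proposal correctly identifies the Henneberg induction and correctly handles $0$-extensions (the factor-of-$2$ argument there is standard and sound), but the entire content of the conjecture is concentrated in the claim that a $1$-extension at least doubles $c_2$. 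That claim is not known; proving it would prove the conjecture, so your outline reduces an open problem to an open problem. The proposed injection from two copies of the intersecting arboreal pairs (equivalently realisation bases) of $M_G$ into those of $M_{G'}$ is not carried out, and there is no evidence in the paper's machinery that such an injection exists --- indeed, since realisation bases undercount $c_2$ in general, even a successful injection at the level of realisation bases would not by itself give $c_2(G') \geq 2c_2(G)$ in the form needed to close the induction.

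Your fallback is also weaker than you suggest: the assertion that counting realisation bases directly ``still gives an exponential lower bound, but with a smaller base'' is unsupported. The paper proves no growth rate for the number of realisation bases, the count depends on the chosen total order $\prec$, and nothing in \Cref{lem:nbc+basis+intersection} or \Cref{thm:arboreal} guarantees it is even larger than the trivial bound $c_2(G)\geq 2$ for all families of minimally $2$-rigid graphs. So the honest summary is: your $0$-extension step is fine, your $1$-extension step is precisely the open heart of \Cref{conj:lowerbound} and remains unproved, and your fallback does not deliver what you claim. The strongest facts actually available are those recorded in the paper and in \cite{JO19,cggkls}: the conjecture holds for planar minimally $2$-rigid graphs and has been verified computationally for $n \leq 12$.
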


As evidence towards this, they proved the conjecture for planar minimally 2-rigid graphs.
Moreover, it has been verified for $n \leq 12$ in~\cite{cggkls}.
Despite this, in general we have no better lower bound than $c_2(G) \geq 2$ for an arbitrary minimally 2-rigid graph.

In this section, we utilize some of the theory developed in the previous section to give a lower bound on the realisation number in terms of nbc-bases.

\begin{definition}
Let $G = ([n], E)$ be a minimally 2-rigid graph and $(M_G, \prec)$ its graphic matroid with some fixed total order $\prec$ on $E$.
A \emph{realisation basis} is an nbc-basis $B$ of $(M_G, \prec)$ such that $E \setminus B \cup \min(E)$ is also an nbc-basis.
\end{definition}

Realisation bases naturally come in pairs, as $B$ is a realisation basis if and only if $E \setminus B \cup \min(E)$ also is.
Unlike with nbc-bases, the number of realisation bases of the matroid $(M_G, \prec)$ is highly dependent on the total order $\prec$.
As such, we shall always discuss the number of realisation bases with respect to a specific order.

\begin{corollary} \label{cor:lower+bound}
    Let $G = ([n], E)$ be a minimally 2-rigid graph and $(M_G, \prec)$ its graphic matroid with some fixed total order $\prec$ on $E$.
    Then
    \[
    2c_2(G) \geq \# \text{ of realisation bases of } (M_G, \prec) \, .
    \]
\end{corollary}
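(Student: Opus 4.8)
The plan is to construct an injection from the set of realisation bases of $(M_G,\prec)$ into the set of \emph{ordered} intersecting arboreal pairs of $M_G$ with itself, and then to identify the size of the latter set with $2c_2(G)$. The second ingredient is essentially already in hand: since $G=([n],E)$ is minimally $2$-rigid we have $r(M_G)=n-1$ and $|E|=2n-3$, whence $r(M_G)+r(M_G)=|E|+1$, so \Cref{prop:intersecting+arboreal+pair+invariant} (combined with \Cref{thm:main}) applies with $M=N=M_G$ and yields, exactly as in the proof of \Cref{thm:arboreal},
\[
2c_2(G)=\#\big\{(\cF,\cH)\in\Delta(M_G)\times\Delta(M_G)\mid (\cF,\cH)\text{ an intersecting arboreal pair}\big\}.
\]

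For the injection I would start from a realisation basis $B$ and set $B':=(E\setminus B)\cup\{\min(E)\}$. By definition of a realisation basis, both $B$ and $B'$ are nbc-bases of $M_G$, so \Cref{lem:nbc+basis+intersection} (with $M=N=M_G$) tells us that $(\cF(B),\cF(B'))$ is an intersecting arboreal pair. This defines a map $B\mapsto(\cF(B),\cF(B'))$ from realisation bases to ordered intersecting arboreal pairs. To see it is injective, it suffices to recover $B$ from $\cF(B)$ alone. Writing $\cF(B):F_0\subsetneq F_1\subsetneq\cdots\subsetneq F_{n-1}=E$, \Cref{lem:nbcBasisToFlat} says that the elements of the nbc-basis $B$, listed in decreasing $\prec$-order, are precisely $\min(F_1)\succ\cdots\succ\min(F_{n-1})$; hence $B$ is determined by the chain $\cF(B)$, and the map is injective.

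Combining the two facts gives
\[
\#\{\text{realisation bases of }(M_G,\prec)\}\ \le\ \#\big\{(\cF,\cH)\text{ an intersecting arboreal pair}\big\}\ =\ 2c_2(G),
\]
which is the claim. The only points needing care are bookkeeping ones: confirming the factor of two in the count of ordered intersecting arboreal pairs (which holds because $(\cF,\cF)$ is never arboreal and, when $M=N$, $(\cF,\cH)$ is arboreal iff $(\cH,\cF)$ is, by the symmetry of the conditions in \Cref{prop:flat+cone+intersect}), and checking that $B'$ is a basis of the correct size, namely $(2n-3)-(n-1)+1=n-1$ elements, which uses that $\min(E)\in B$ for every nbc-basis $B$. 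I do not expect a genuine obstacle here, since the substantive combinatorics has already been carried out in \Cref{lem:nbc+basis+intersection}.
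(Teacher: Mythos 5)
Your proposal is correct and follows essentially the same route as the paper: apply \Cref{lem:nbc+basis+intersection} with $M=N=M_G$ to turn each realisation basis $B$ into the intersecting arboreal pair $(\cF(B),\cF(B'))$, and then bound via the count of intersecting arboreal pairs furnished by \Cref{thm:arboreal}. Your explicit injectivity check (recovering $B$ from $\cF(B)$ via \Cref{lem:nbcBasisToFlat}) and the factor-of-two bookkeeping are points the paper leaves implicit, but the substance is identical.
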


\begin{proof}
    Applying \Cref{lem:nbc+basis+intersection} in the case where $M = N = M_G$ tells us that the pair
    \begin{equation*}
        \cF(B), \quad \cF\big((E \setminus B) \cup \min(E) \big)
    \end{equation*}
    form an intersecting arboreal pair for each realisation basis $B$ of $(M_G, \prec)$.
    It then follows from \Cref{thm:arboreal} that $c_2(G)$ is lower-bounded by this count.
\end{proof}

\begin{example}
Consider the graph $G = K_4^-$ from \Cref{example: nbc bases K4-e}. 
Keeping the same ordering on the edges, we see that all of the nbc-bases are also realisation bases.
For example, the nbc-basis $B = 135$ is paired with the nbc-basis $B' = [5] \setminus E \cup\{5\} = 245$, implying they are both realisation bases.
As such, the lower bound from \Cref{cor:lower+bound} is an equality for this choice of ordering.
We emphasise that other choices of ordering may not achieve this lower bound.

\end{example}

\begin{example}
Now let $G$ be the 3-prism from \Cref{example: nbc bases prism}.
Keeping the same ordering on the edges, we see that there are only 16 realisation bases:
\[
\begin{gathered}
    13469, 13489, 13569, 13589,
    14569, 14589, 15679, 15789, \\
    23469, 23489, 23679, 23789,
    24679, 24789, 25679, 25789.
\end{gathered}
\]
For example, the nbc-basis $B = 13679$ is not a realisation basis as $B' = E \setminus B \cup \{9\} = 24589$ is not an nbc-basis: in this case, it is not even a basis.
As such, the lower bound from \Cref{cor:lower+bound} gives $c_2(G) \geq 8$ for this choice of ordering.
A computer check shows that among all total orderings of the edges of the $3$-prism, this ordering gives the largest possible number of realisation bases, and so the number of realisation bases gives a strict lower bound on the realisation number for every ordering of the edges.
\end{example}

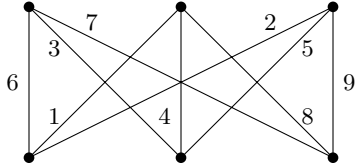
\begin{figure}[t]
\centering
\begin{tikzpicture}[scale=2,every node/.style={font=\footnotesize}]
  \coordinate (v11) at (1,1);
  \coordinate (v21) at (2,1);
  \coordinate (v31) at (3,1);
  \coordinate (v12) at (1,2);
  \coordinate (v22) at (2,2);
  \coordinate (v32) at (3,2);

  \draw
  (v11) -- node[anchor=east] {$6$} (v12)
  (v11) -- node[anchor=east,pos=0.275] {$1$} (v22)
  (v11) -- node[anchor=south east,pos=0.825,inner sep=1pt] {$2$} (v32)
  (v21) -- node[anchor=east,pos=0.725] {$3$} (v12)
  (v21) -- node[anchor=east,pos=0.275] {$4$} (v22)
  (v21) -- node[anchor=west,pos=0.725] {$5$} (v32)
  (v31) -- node[anchor=south west,pos=0.825,inner sep=1pt] {$7$} (v12)
  (v31) -- node[anchor=west,pos=0.275] {$8$} (v22)
  (v31) -- node[anchor=west] {$9$} (v32);

  \fill
  (v11) circle (1pt)
  (v21) circle (1pt)
  (v31) circle (1pt)
  (v12) circle (1pt)
  (v22) circle (1pt)
  (v32) circle (1pt);
\end{tikzpicture}
\caption{The complete bipartite graph $K_{3,3}$ with edges labelled for \Cref{ex:realisation+basis+below+conj}.}
\label{fig:K33}
\end{figure}

\begin{example}\label{ex:realisation+basis+below+conj}
    We note that even for relatively small graphs, the lower bound from realisation bases may not reach the conjectured lower bound of $2^{n-3}$ from \Cref{conj:lowerbound}.
    Consider the complete bipartite graph $K_{3,3}$ with edge labels given in \Cref{fig:K33}.
    Its realisation number is equal to the conjectured lower bound of $8$.
    Under the usual ordering $1 > \cdots > 9$, there are 14 realisation bases of $K_{3,3}$:
    \[
    \begin{gathered}
    12579, 13569, 13689, 13789, 14579, 14569, 15789, \\
    34689, 24789, 24579, 24569, 23689, 23789, 23469.
    \end{gathered}
    \]
    \Cref{cor:lower+bound} gives a lower bound of $c_2(K_{3,3}) \geq 7$.
    Moreover, a computer search shows us that no other ordering of the edges gives rise to more than 14 realisation bases, hence the lower bound offered by realisation bases does not attain the conjectured lower bound.
\end{example}

\section{Concluding remarks}

We end the paper with some comments and avenues for future development.

\subsection{Computations}\label{subsec: computations}

We computed upper bounds for the realisation number given by the mixed volume and by nbc-bases for all minimally 2-rigid graphs on at most $10$ vertices \cite{RigidityTestSuiteM2}. See \Cref{tab: upper bound computations}. In total we have tested $117273$ graphs. There are $100958$ graphs ($\approx 86.1\%$) for which nbc-bases give a strictly better upper bound than the mixed volume. There are $14876$ graphs ($\approx
12.7\%$) for which mixed volumes give a better upper bound than nbc-bases. For the remaining $1439$ graphs ($\approx 1.2\%$), the number of nbc-bases matched the mixed volume bound.

The table shows how far the upper bounds are from the realisation number. The scaled results show how far the bounds are as a multiple of the realisation number. For instance, suppose $G$ is a graph with $100$ complex realisations and whose matroid has $250$ nbc-bases. Then the difference between the realisation number and the number of nbc bases is $150$ and the scaled difference is $(250 - 100)/100 = 1.5$. \Cref{fig: histogram} shows a histogram of the scaled differences for nbc bases and mixed volume.

\begin{table}[]
    \centering
    \begin{tabular}{ccccc}
    \toprule
        &
        mean &  s.d. & mean (scaled) & s.d. (scaled)  \\
    \midrule
       Mixed volume & 1147.63 & 1044.94 & 4.07 & 3.85 \\
       nbc-bases & 337.91 & 159.75 & 1.21 & 0.59 \\
    \bottomrule
    \end{tabular}
    \caption{Difference between upper bounds and realisation numbers in \Cref{subsec: computations}.}
    \label{tab: upper bound computations}
\end{table}

\begin{figure}[t]
    \centering
    \includegraphics[width=0.9\linewidth]{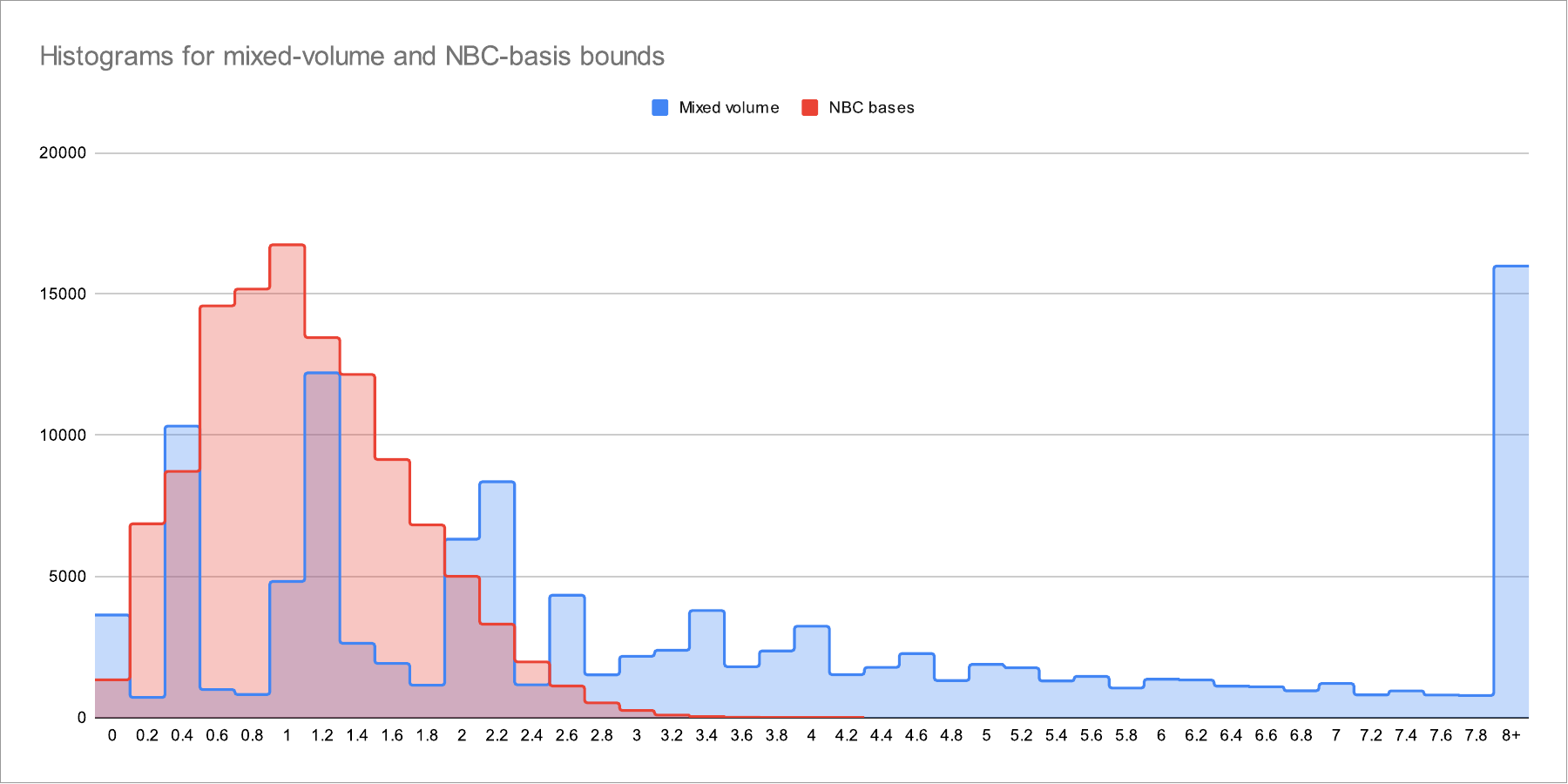}
    \caption{Histogram for the mixed-volume-bound and nbc-basis-bound. The $x$-axis is the scaled difference between the upper bound and realisation number}
    \label{fig: histogram}
\end{figure}

\subsection{Bigraphs}

A \emph{bigraph} is a pair of multigraphs $(G,H)$ that share an edge set $\mathcal{E}$, in the sense that there exist bijections $\phi_G:E(G) \rightarrow \mathcal{E}$ and $\phi_H:E(H) \rightarrow \mathcal{E}$.
The concept of a bigraph was first introduced in \cite{cggkls} in the design of their inductive algorithm for realisation numbers.
Each bigraph $(G,H)$ can be assigned a value $\Lam(G,H) \in \mathbb{Z}_{\geq 0} \cup \{\infty\}$ called the \emph{Laman number} of the pair $(G,H)$.
For the bigraph $(G,G)$ with edges paired up with their copies, they showed that $\Lam(G,G) = 2 c_2(G)$ \cite[]{cggkls}.

A small adaptation to \cite[Lemma 2.16]{cggkls} shows that the Laman number of a bigraph $(G,H)$ is equal to the root count of the following ideal for generic $(a_e)_{e \in \mathcal{E}}, c_{1,1},c_{1,2}$ and any fixed $\epsilon \in \mathcal{E}$:
\begin{equation}\label{eq:bigraph}
    \begin{aligned}
        f_e &\coloneqq y_{e,1}\cdot y_{e,2} - a_{e} &&\text{for } e \in \mathcal{E} \\
        g_{\epsilon} &\coloneqq c_{1,1}y_{\epsilon,1} + c_{1,2}y_{\epsilon,2}, \\
        h_{C,G}&\coloneqq \sum_{(s,t)\in C} y_{st,1} &&\text{for each directed cycle $C$ of } G ,\\
        h_{D,H}&\coloneqq \sum_{(s,t)\in D} y_{st,2} &&\text{for each directed cycle $D$ of } H.
    \end{aligned}
\end{equation}
This similarity to the ideal $I'_E$ given in \Cref{lem:realisationNumberTwoDimensionalEdgeVariables} can be exploited to describe a bigraph analogue for the ideal $I''_E$ given in \Cref{lem:SIAGA2}.
The natural bigraph analogue to \Cref{thm:main} then follows the same proof technique with the graph $G$ used in the ideal $I_1$ replaced by $H$:

\begin{theorem}
  \label{thm:bigraph}
  Let $(G,H)$ be a bigraph with shared edge set $\mathcal{E}$.
  If $\Lam (G,H)$ is a positive integer,
  then for any $\epsilon \in \mathcal{E}$ we have
  \[ \Lam (G,H) = (-\Trop(M_G))\cdot \Trop(M_H)\cdot \Trop(y_{\epsilon}-1). \]
\end{theorem}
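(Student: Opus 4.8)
The plan is to run the proof of \Cref{thm:main} essentially verbatim, the one structural change being that the two Bergman fans appearing there, which for $(G,G)$ are two copies of $\Trop(M_G)$, are now allowed to come from the two different graphs $G$ and $H$. Starting from the ideal in \eqref{eq:bigraph}, whose generic root count is $2\Lam(G,H)$ (the factor $2$ is the same pinning factor as in \Cref{lem:realisationNumberTwoDimensionalEdgeVariables}, where the diagonal case gives $4c_2(G)=2\Lam(G,G)$), I would first produce the bigraph analogue of \Cref{lem:SIAGA2}: use the binomials $f_e=y_{e,1}y_{e,2}-a_e$ to eliminate the coordinate-one variables via $y_{e,1}\mapsto a_e y_{e,2}^{-1}$, rename $y_{e,2}$ to $y_e$, and clear the denominator in $g_\epsilon$. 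This yields a parametrised Laurent ideal $I''$ in variables $y_e$ ($e\in\mathcal E$), generated by $\sum_{(s,t)\in C}a_{st}y_{st}^{-1}$ for each directed cycle $C$ of $G$, by $\sum_{(s,t)\in D}y_{st}$ for each directed cycle $D$ of $H$, and by $c_{1,1}y_\epsilon^2+c_{1,2}$. Since the eliminated binomials are non-degenerate for generic $a$, this rewriting leaves the generic root count unchanged, so $\ell_{I'',\CC(a,c)}=2\Lam(G,H)$.

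Next I would split $I''=I_1+I_2+I_3$, with $I_1$ generated by the $G$-cycle polynomials, $I_2$ by the $H$-cycle polynomials, and $I_3=\langle c_{1,1}y_\epsilon^2+c_{1,2}\rangle$, and identify their tropicalisations exactly as in the proof of \Cref{thm:main}. The support-minimal linear forms in $I_{2,P}$ are the cycle-indicator equations of $H$, so $M(I_{2,P})=M_H$ and $\Trop(I_{2,P})=\Trop(M_H)$ by \Cref{lem:bergmanFan}; applying the monomial map $y_e\mapsto y_e^{-1}$ turns $I_{1,P}$ into a linear ideal whose circuits are the cycles of $G$, and since that map tropicalises to negation we get $\Trop(I_{1,P})=-\Trop(M_G)$; and $\Trop(I_{3,P})$ is the hyperplane $\{y_\epsilon=0\}$ carrying multiplicity two, agreeing set-theoretically with $\Trop(y_\epsilon-1)$ but with doubled weight, exactly as for $\Trop(y_{12}^2-1)$ in \Cref{thm:main}.

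It then remains to verify the hypotheses of \Cref{prop:keyprop} for $(I_1,I_2,I_3)$. Parametric independence is clear, since $I_1$ uses only the parameters $a_e$, $I_3$ only $c_{1,1},c_{1,2}$, and $I_2$ none; torus equivariance of $I_1$ (rescaling the $a_e$) and of $I_3$ (rescaling $c_{1,1}$) is checked by the same torus actions as for $I_2,I_3$ in the proof of \Cref{thm:main}; and after relabelling so that the parameter-free ideal $I_2$ is listed last, \Cref{prop:keyprop} applies as soon as the codimensions sum to $|\mathcal E|$, i.e.\ $r(M_G)+r(M_H)=|\mathcal E|+1$. This is precisely where the hypothesis that $\Lam(G,H)$ is a positive integer enters: finiteness of $\ell_{I'',\CC(a,c)}$ forces this complementary-dimension condition, while positivity forces $G$ and $H$ to be loopless — a loop edge would make a length-one cycle equation a unit in the Laurent ring, collapsing the quotient — so that $M_G,M_H$ are loopless matroids with well-defined Bergman fans.

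\Cref{prop:keyprop} now gives
\[
2\Lam(G,H)=\Trop(I_{1,P})\cdot\Trop(I_{2,P})\cdot\Trop(I_{3,P})=(-\Trop(M_G))\cdot\Trop(M_H)\cdot\Trop(y_\epsilon^2-1),
\]
and dividing by $2$ — the weight of the single cell of $\Trop(y_\epsilon^2-1)$ against the factor $2$ on the left, exactly as at the end of the proof of \Cref{thm:main} — yields $\Lam(G,H)=(-\Trop(M_G))\cdot\Trop(M_H)\cdot\Trop(y_\epsilon-1)$. The main obstacle is not any individual step but the bookkeeping: one must keep the constant factor $2$ straight through the root-count identifications and, more importantly, confirm that ``$\Lam(G,H)$ is a positive integer'' is exactly the hypothesis licensing both the dimension count in \Cref{prop:keyprop} and the looplessness required to speak of $\Trop(M_G)$ and $\Trop(M_H)$.
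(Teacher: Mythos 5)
Your proposal follows the paper's route essentially verbatim: the paper proves \Cref{thm:bigraph} by adapting \cite[Lemma 2.16]{cggkls} to express the Laman number as the generic root count of the system \eqref{eq:bigraph}, passing to the bigraph analogue of the ideal in \Cref{lem:SIAGA2}, and rerunning the proof of \Cref{thm:main} with $H$ replacing $G$ in one of the cycle ideals, which is exactly what you do, including the correct identifications $\Trop(I_{1,P})=-\Trop(M_G)$, $\Trop(I_{2,P})=\Trop(M_H)$, and the multiplicity-two hyperplane $\Trop(y_\epsilon^2-1)$. Your factor bookkeeping (root count $=2\Lam(G,H)$, cancelled against the weight-two cell of $\Trop(y_\epsilon^2-1)$) is the one consistent with the diagonal case $\Lam(G,G)=2c_2(G)$ and \Cref{lem:realisationNumberTwoDimensionalEdgeVariables}, and your remarks on how positivity and finiteness of $\Lam(G,H)$ supply looplessness and the complementary-dimension hypothesis of \Cref{prop:keyprop} go slightly beyond, but are compatible with, the paper's own very brief sketch.
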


\Cref{thm:bigraph} also can be combined with the 2-realisation number algorithm described in \cite{cggkls}:
this then gives an inductive contraction-deletion algorithm for determining the tropical intersection product $(-\Trop(M))\cdot \Trop(N)\cdot \Trop(y_{\epsilon}-1)$ when $M$ and $N$ are both graphical matroids on a common ground set.
It is currently open if such an algorithm exists when $M$ and $N$ are not graphical.

\subsection{Higher dimensions}

Up to \Cref{lem:realisationNumberGeneralEdgeVariables} we worked in arbitrary dimension. However after that we had to restrict to 2-dimensions.
In particular, one of the main `tricks' we implement is to switch the distance between pairs being of the form $x^2 + y^2$ to $xy$ using a linear transformation.
This is no longer possible for $d = 3$ (in fact, for $d \geq 3$) since the corresponding distance equation is of the form $x^2 + y^2 + z^2$,
which is irreducible.

We can tropicalise the equations in \Cref{lem:realisationNumberGeneralEdgeVariables}, but to the best of our knowledge the resulting tropical variety has no nice combinatorial characterisation. It would be interesting to find an analogue of \Cref{thm:main} in higher dimensions.

\subsection{Realisation numbers as matroid invariants}

Recent innovations of intersection theory for matroids have lead to a number of success stories quantifying known matroid invariants as tropical intersection products of Bergman fans and their `flips'.
We have already seen and utilised the result of \cite{AHK:18} where $\nbc M$ is equal to the tropical intersection product of the flipped Bergman fan $-\Trop(M)$ with the Bergman fan of a uniform matroid.
Moreover, \cite{AEP:23} deduced that the $\beta$-invariant $\beta(M)$ of a matroid can be computed as a tropical intersection product of Bergman fans derived from $M$.

Our \Cref{thm:main} has a similar flavour to these results, demonstrating that the 2-realisation number of $G$ can be quantified as the tropical intersection product of two fans from $M_G$.
Moreover, these previous developments provide evidence that the realisation number may have a combinatorial formula in terms of known matroid invariants, and that using the intersection theory of matroids is the method to obtain this.
We hope that such theoretical developments may lead to computational and algorithmic improvements for computing realisation numbers.


\subsection*{Acknowledgements}

We thank Matteo Gallet and Josef Schicho for their helpful conversations.
This project arose from a Focused Research Group funded by the Heilbronn Institute for Mathematical Research (HIMR) and the UKRI/EPSRC Additional Funding Programme for Mathematical Sciences.
S.\,D.\ and J.\,M.\ were supported by HIMR.
D.\,G.\,T.\ was supported by EPSRC grant EP/W524414/1.
A.\,N.\ and B.\,S.\ were supported by EPSRC grant EP/X036723/1.
Y.\,R.\ was supported by UKRI grant MR/S034463/2.

\renewcommand*{\bibfont}{\small}

\printbibliography
\end{document}